\documentclass[11pt]{amsart}

\usepackage{amstext}
\usepackage{amsthm}
\usepackage{amssymb}
\usepackage[a4paper, total={6in, 9in}]{geometry}
\usepackage{enumerate}
\usepackage{xcolor}
\usepackage{comment}

\newtheorem{theorem}{Theorem}[section]
\newtheorem{lemma}[theorem]{Lemma}
\newtheorem{proposition}[theorem]{Proposition}
\newtheorem{corollary}[theorem]{Corollary}
\newtheorem{remark}[theorem]{Remark}

\DeclareMathOperator{\loc}{loc}

\DeclareMathOperator{\dist}{dist}

\DeclareMathOperator{\tr}{tr}
\DeclareMathOperator{\sym}{Sym}
\DeclareMathOperator{\Cyl}{Cyl}

\DeclareMathOperator{\graph}{graph}

\DeclareMathOperator{\cl}{cl}

\title[Uniqueness of convex ancient solutions]{Uniqueness of convex ancient solutions to hypersurface flows}
\author{Stephen Lynch}

\begin{document}

\begin{abstract}
We show that every convex ancient solution of mean curvature flow with Type I curvature growth is either spherical, cylindrical, or planar. We then prove the corresponding statement for flows by a natural class of curvature functions which are convex or concave in the second fundamental form. Neither of these results assumes interior noncollapsing. 
\end{abstract}

\maketitle

\section{Introduction}

A basic problem in geometric analysis is to determine the possible shapes of ancient solutions to geometric flows. Such solutions, called ancient because they extend arbitrarily far back in time, arise naturally as limits of parabolic dilations at  singularities. For this reason ancient solutions are an indispensable tool in understanding singularity formation, and have consequently played a central role in applications such as Perelman's proof of the geometrisation conjecture.

This work establishes uniqueness results for ancient families of smooth hypersurfaces $M_t$ in $\mathbb{R}^{n+1}$ that are \textbf{convex}, meaning $M_t = \partial \Omega_t$ for $\Omega_t$ open and convex, and solve mean curvature flow, or else move by a more general curvature function satisfying natural conditions. All of the evolution equations we consider are invariant under parabolic rescalings. In this setting, following Hamilton \cite{Ham93}, ancient solutions with bounded second fundamental form $A$ are typically divided into two families: $\{M_t\}_{t\in(-\infty,T]}$ is said to be of \textbf{Type I} if
\begin{equation}
\label{eq:intro_TypeI}
\limsup_{t\to -\infty} \sup_{M_t} \sqrt{-t} \, |A| < \infty,
\end{equation}
whereas solutions for which \eqref{eq:intro_TypeI} fails are of \textbf{Type II}. Our first result classifies ancient solutions of mean curvature flow that are convex and Type I.
\begin{theorem}
\label{thm:main_MCF}
Every Type I convex ancient solution of mean curvature flow in $\mathbb{R}^{n+1}$, $n \geq 2$, is either a family of homothetically shrinking spheres or cylinders, or else consists of stationary hyperplanes. 
\end{theorem}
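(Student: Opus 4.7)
The plan is to pass to a parabolic rescaling, identify every backward subsequential limit using Huisken's classification of convex self-shrinkers, and then promote the resulting asymptotic rigidity to exact rigidity at every time via strong maximum principle arguments.

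First I would, after a time translation, assume $M_t$ is defined for $t \in (-\infty, 0)$ and set $\tilde X(\cdot, \tau) = (-t)^{-1/2} X(\cdot, t)$ with $\tau = -\log(-t)$. Each $\tilde M_\tau$ is convex, and by \eqref{eq:intro_TypeI} we have $\sup_\tau \sup_{\tilde M_\tau} |\tilde A| < \infty$. Ecker--Huisken interior estimates promote this to uniform bounds on every $\nabla^k \tilde A$, so for any sequence $\tau_j \to -\infty$ a subsequence of $\tilde M_{\tau_j}$ converges smoothly on compact subsets of $\mathbb{R}^{n+1}$ to a complete, smooth, convex hypersurface $\Sigma$. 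Huisken's monotonicity formula applied to the original flow shows that the Gaussian area $F[\tilde M_\tau]$ is nonincreasing, and convexity combined with the curvature bound keeps it finite, so $F[\tilde M_\tau]$ has a limit as $\tau \to -\infty$ and the equality case of monotonicity forces $\Sigma$ to be a smooth self-shrinker. Huisken's classification of convex smooth self-shrinkers in $\mathbb{R}^{n+1}$ then leaves only three options: $\Sigma$ is a sphere $\sqrt{2n}\, S^n$, a generalised cylinder $\sqrt{2k}\, S^k \times \mathbb{R}^{n-k}$ with $1 \leq k \leq n-1$, or a hyperplane.

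Next I would transfer this rigidity to every time. If $\Sigma$ is a sphere, convexity forces each $M_t$ to be compact, the pinching quantity $|A|^2 - H^2/n$ vanishes at $\tau = -\infty$, and Hamilton's strong maximum principle identifies $M_t$ as a shrinking sphere. If $\Sigma$ is a hyperplane, then $\sqrt{-t}\,|A| \to 0$ along the chosen subsequence, and a maximum principle applied to $|A|^2$ on the original flow forces $|A| \equiv 0$, so $M_t$ is a stationary hyperplane. If $\Sigma$ is a cylinder $\sqrt{2k}\, S^k \times \mathbb{R}^{n-k}$, the smallest $n-k$ principal curvatures of $\tilde M_\tau$ tend to zero at $-\infty$, and Hamilton's strong maximum principle for systems applied to the second fundamental form produces a parallel rank-$(n-k)$ kernel of $A$ on every $M_t$; a de Rham--type splitting combined with convexity then yields $M_t = \sqrt{-2kt}\, S^k \times \mathbb{R}^{n-k}$. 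The main obstacle is the cylinder case: a priori the shrinker $\Sigma$ could depend on the chosen subsequence, with distinct axes or radii arising at different blow-down scales, so one must show that the pointwise curvature rigidity provided by the strong maximum principle propagates uniformly in time and is independent of the subsequence. I expect to handle this by combining the invariance of $\ker A$ under parallel transport on each $M_t$ with the scaling enforced by the Type I bound, so that the cylindrical structure visible at any one time extends coherently along the entire ancient interval.
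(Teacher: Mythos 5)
Your rescaling-and-blowdown approach is the right starting point and shares the paper's overall strategy, but the argument has two genuine gaps, both of which turn out to be where the paper's real work happens.

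\textbf{Gap 1: you have not shown the blow-down is a smooth multiplicity-one shrinker.} The Type I bound plus Ecker--Huisken estimates give uniform $C^\infty$ control on the rescaled flow locally, but this does not preclude the limit from being a hyperplane with multiplicity two, which is a perfectly good Brakke self-shrinker but not one that Huisken's classification applies to. Convex ancient solutions that blow down to a multiplicity-two hyperplane genuinely exist (the ``pancake'' examples cited in the paper). Your argument needs, and does not have, the analogue of Lemmas \ref{lem:lower-speed_MCF} and \ref{lem:displacement}: a lower bound $\sqrt{1-t}\,H \geq c_L$ near the origin, which forces $\Omega_t$ to contain a ball of radius $\sim\sqrt{-t}$, ruling out collapse and guaranteeing the blow-down limit is a genuine embedded convex hypersurface passing through a definite neighbourhood of the origin. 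Without this, the convergence you invoke may produce nothing usable.

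\textbf{Gap 2: the cylinder case cannot be closed by a strong maximum principle as stated.} You apply Hamilton's strong maximum principle to conclude that, since the smallest $n-k$ principal curvatures of $\tilde M_\tau$ tend to zero as $\tau\to-\infty$, the second fundamental form has a parallel rank-$(n-k)$ kernel on every $M_t$. But the strong maximum principle requires $\ker A$ to be nontrivial \emph{at some actual spacetime point}; it does not apply to an infimum attained only in the limit $\tau\to-\infty$. The bowl soliton is strictly convex yet blows down to a cylinder, so ``blow-down is cylindrical'' does not by itself force a zero of $\lambda_1$ at finite time. You flag this correctly as the main obstacle, but your proposed fix (combining parallel transport with the Type I scaling) does not obviously address it. What the paper does instead is show, via the Type I containment $M_t \subset D(\Omega_0, K\sqrt{-t})$ together with the displacement estimate, that the final slice of the blow-down equals the tangent cone at infinity $\mathcal T_\infty \Omega_0$ of the original solution. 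Xu-Jia Wang's entirety result then shows $\mathcal T_\infty \Omega_0$ is an affine subspace $\Sigma$, and since $\Sigma \subset \Omega_t$ for all $t$, the original flow itself splits as $\Sigma \times M_t^\perp$ with $M_t^\perp$ compact. This transfer of the Euclidean factor from the blow-down to the original solution is the crux of the argument and cannot be replaced by a bare strong maximum principle. You will also want to revisit the hyperplane case: a maximum principle applied to $|A|^2$ does not directly give $|A|\equiv 0$ from $\sqrt{-t}\,|A|\to 0$ along a subsequence (the reaction term is positive); the paper instead dispenses with the planar case immediately via the strong maximum principle applied to $H$ at the outset.
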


Here and throughout, a cylinder is a round $S^{n-m} \times \mathbb{R}^m$, $1 \leq m \leq n-1$. It is interesting to compare Theorem \ref{thm:main_MCF} with Hamilton's differential Harnack inequality \cite{Ham_har}, which implies every Type II convex ancient solution of mean curvature flow is asymptotic to a translating soliton (cf. \cite{Huisk-Sin15}[Theorem 4.7]).

Consider now a maximal solution of mean curvature flow $\{M_t\}_{t\in[0,T)}$ such that $M_0$ is compact, embedded and has nonnegative mean curvature. By work of Huisken-Sinestrari \cite{Huisk-Sin99a} and White \cite{White00}, \cite{White03} (see also \cite{Hasl-Klein}), any ancient solution arising as a blow-up limit of the flow at $T$ is convex. Thus, if the limit is also Type I, it is spherical, cylindrical or planar by Theorem \ref{thm:main_MCF}. This includes all limits arising at a Type I singularity, that is when the bound $ |A| \leq C/\sqrt{T-t}$ holds for all $t <T$ and some $C <\infty$.

In a similar way Theorem \ref{thm:main_MCF} can also be used to study singularities in solutions that are only immersed and mean-convex. For this purpose it is crucial the theorem does not require interior noncollapsing (in the sense of \cite{Sheng-Wang} and \cite{And12}), since blow-up limits of immersed mean-convex solutions are not known to have this property (except under uniform two-convexity \cite{Naff_models}). The Huisken-Sinestrari convexity estimate \cite{Huisk-Sin99a} does apply to immersed mean-convex solutions, implying smooth blow-up limits have nonnegative second fundamental form, and thus are either convex or have zero scalar curvature by a result of Sacksteder \cite{Sacksteder} and the strong maximum principle. Theorem \ref{thm:main_MCF} also sheds new light on the formation of singularities in higher codimensions, where embeddedness is not preserved and there is no notion of interior noncollapsing, but sufficient pinching implies blow-ups are codimension-one \cite{Naff} and convex \cite{Lynch_Nguyen}.

Our result adds to a host of uniqueness theorems for ancient solutions of mean curvature flow under various hypotheses. In particular, Huisken-Sinestrari established the compact case of Theorem \ref{thm:main_MCF} in \cite{Huisk-Sin15}, along with other uniqueness results for the shrinking sphere. Haslhofer-Hershkovits proved similar results under an additional noncollapsing assumption \cite{Haslhofer_Hershkovits}. Self-shrinking solutions that are embedded and mean-convex were classified by Huisken \cite{Huisk93} and Colding-Minicozzi \cite{CM12} - these are all spheres and cylinders. Brendle proved that every compact genus-zero self-shrinking embedding in $\mathbb{R}^3$ is a round sphere \cite{Brendle_shrink}. Daskalopoulos-Hamilton-Sesum \cite{Dask-Ham-Ses} showed that every compact convex ancient solution in $\mathbb{R}^2$ is either a shrinking circle or an Angenent oval, and Bourni-Langford-Tinaglia \cite{BTL20} recently extended this classification to the noncompact case, showing the only additional examples that arise there are the grim reaper and stationary lines. 

A remarkable series of recent works have led to a complete classification of convex ancient solutions that are uniformly two-convex (this is implied by convexity when $n=2$) and interior noncollapsed. By work of Brendle-Choi \cite{Brendle-Choi19}, \cite{Brendle-Choi18} the only noncompact solutions in this class are cylinders with one Euclidean factor and the translating bowl soliton, and Angenent-Daskalopoulos-Sesum \cite{ADS20} showed the only compact possibilities are spheres and the $O(n)\times O(1)$-symmetric ovaloid. These works build on an earlier uniqueness result for the bowl amongst translators due to Haslhofer \cite{Hasl_bowl}.

The proof of Theorem \ref{thm:main_MCF} is by dimension-reduction, and broadly proceeds as follows. Given a Type I convex ancient solution $M_t = \partial \Omega_t$, $t \in(-\infty,T]$, the strong maximum principle implies $M_t$ is either planar or has positive mean curvature. In the latter case we establish $\Omega_t$ contains a ball of radius $\sim\sqrt{-t}$ centered at the origin. This enables us to extract a blow-down limit of $M_t$ that sweeps out all of space as $t$ ranges over $(-\infty,0)$, and thus splits off a Euclidean factor by a result of Xu-Jia Wang. By carefully exploiting the Type I property we show this Euclidean factor coincides with the tangent cone at infinity of the original solution. From this, we conclude $M_t$ splits as a Euclidean factor times a compact convex solution of Type I which is then spherical by \cite{Huisk-Sin15} and \cite{Dask-Ham-Ses}.

\subsection{Fully nonlinear flows}

We turn now to parabolic evolution equations that deform hypersurfaces by more general curvature functions. Mean curvature flow is the unique quasilinear equation in this class which is invariant under ambient isometries - all others are fully nonlinear. Important examples include the flow of convex hypersurfaces by their Gauss curvature introduced by Firey \cite{Firey} as a model for the wearing of tumbling stones. Flows by powers of the Gauss curvature are also of importance, arising for example in affine geometry \cite{Andrews_affine} and in connection with convex hull peeling \cite{Calder_Smart}. The asymptotic behaviour of these flows has been described completely \cite{Chow}, \cite{Andrews_affine}, \cite{Andrews_Firey}, \cite{Brendle_Choi_Dask}.  

Andrews initiated the study of flows by general one-homogeneous curvature quantities that are convex or concave in \cite{And94_euclid}. Concave flows of this type have been employed to prove deep results on the topology of hypersurfaces in Riemannian manifolds, first for positive principal curvatures in \cite{And94}, and later under strict two-convexity \cite{Bren-Huisk17}. Mean curvature flow is unsuitable for these applications since it fails to preserve the curvature conditions considered (except in highly symmetric ambient spaces), in contrast to the fully nonlinear equations used by the authors. We note \cite{Bren-Huisk17} contains the first construction of a fully nonlinear geometric flow with surgeries. 

Despite significant progress (see for example \cite{And-Lang-McCoy13}, \cite{And-Lang-McCoy14}, \cite{Bren-Huisk17}, \cite{Lang-Lyn20}, \cite{Lynch20}), much remains unclear about singularities in non-convex solutions of fully nonlinear flows. This is due to the fact that certain ubiquitous tools in mean curvature flow, such as Huisken's monotonicity formula \cite{Huisken90}, generally speaking have no analogue in the fully nonlinear setting. Our second main result, a generalisation of Theorem \ref{thm:main_MCF} to flows by a large class of one-homogeneous concave and convex curvature functions, fills part of this technical gap.

Our setting is as follows. We consider families of hypersurfaces $\{M_t\}_{t\in I}$ such that $M_t = F(M,t)$ for an evolving immersion $F:M\times I \to \mathbb{R}^{n+1}$, $n\geq2$, which solves $\partial_t F = - G\nu$. Here $\nu$ is the outward unit normal and $G = \gamma(\lambda)$, where $\lambda$ denotes the principal curvatures of $M_t$, and $\gamma\in C^\infty(\Gamma)$ is an \textbf{admissible speed}; that is, $\gamma$ is positive, increasing in each argument and homogeneous of degree one, and $\Gamma$ is an open, symmetric, convex cone in $\mathbb{R}^n$. An admissible speed $\gamma$ is {\bf inverse-concave} if $\Gamma$ contains the positive cone $\Gamma_+ := \{\lambda_i >0\}$ and the function
\[\gamma_*(\lambda) := \gamma( \lambda_1^{-1},\dots,\lambda_n^{-1} )^{-1}, \qquad \lambda \in \Gamma_+,\]
is concave. A family $\{M_t\}_{t\in I}$ moving with speed $G=\gamma(\lambda)$ will be referred to as a $\gamma$-flow. A $\gamma$-flow is {\bf uniformly parabolic} if its normalised principal curvatures $\lambda / G$ lie in a fixed compact subset of $\Gamma$ for all times. 

We say $\gamma$ admits a \textbf{splitting theorem} if every uniformly parabolic convex ancient $\gamma$-flow is either strictly convex ($A>0$) or splits as a product $M_t := \mathbb{R}^m \times M_t^\perp$, where $M_t^\perp$ is strictly convex in $\mathbb{R}^{n-m+1}$. All convex admissible speeds admit a splitting theorem \cite{And-Lang-McCoy14}, and we prove a splitting theorem for a natural class of concave speeds in Appendix \ref{app:splitting}. The latter applies to the two-harmonic mean studied in \cite{Bren-Huisk17} and the generalised $k$-harmonic means studied in \cite{Lang-Lyn20}.

\begin{theorem}
\label{thm:main_FNL}
Consider an admisssible speed $\gamma$ which is either convex, or concave and inverse-concave, and admits a splitting theorem. Every uniformly parabolic convex ancient $\gamma$-flow of Type I is a family of homothetically shrinking spheres or cylinders.
\end{theorem}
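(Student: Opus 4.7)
The plan is to adapt the dimension-reduction argument from Theorem \ref{thm:main_MCF} to the fully nonlinear setting, using the assumed splitting theorem as a substitute for both the strong maximum principle and Xu-Jia Wang's splitting result. Let $\{M_t = \partial \Omega_t\}_{t \in (-\infty, T]}$ be a uniformly parabolic convex ancient $\gamma$-flow of Type I. The splitting theorem hypothesis immediately yields a dichotomy: either $M_t$ is strictly convex ($A > 0$), or $M_t = \mathbb{R}^m \times M_t^\perp$ for some $m \geq 1$ with $M_t^\perp$ strictly convex. In the latter case $M_t^\perp$ is a uniformly parabolic convex ancient $\gamma$-flow of Type I in one dimension lower, driven by an admissible speed of the same type (a routine check shows convexity/concavity, inverse-concavity, and the splitting hypothesis all descend to the restricted speed). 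Inducting downwards on dimension, it therefore suffices to show that every strictly convex such flow is compact, in which case the fully nonlinear analogue of the compact classification of \cite{Huisk-Sin15}---available through Andrews's convergence results and standard pinching estimates---identifies it with a family of shrinking spheres.

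Suppose then $A > 0$. After translating so that $0 \in \Omega_T$, the first step is to prove an inradius lower bound of the form $\dist(0, M_t) \geq c \sqrt{-t}$ for $t \ll 0$ and some $c > 0$. This combines uniform parabolicity with Type I, which together yield $G \leq C/\sqrt{-t}$, and a quantitative thickness estimate for convex ancient flows, ruling out that the body degenerates towards $t = -\infty$. Using this inradius bound and standard parabolic compactness, one extracts a smooth limit $\widetilde M_t := \lim_{j \to \infty} \tau_j^{-1/2} M_{\tau_j t}$ along some sequence $\tau_j \to \infty$, producing a uniformly parabolic convex ancient $\gamma$-flow of Type I whose enclosed region $\widetilde \Omega_t$ sweeps out all of $\mathbb{R}^{n+1}$ as $t \to -\infty$. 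Since a strictly convex domain cannot fill space, applying the splitting theorem to $\widetilde M_t$ must produce a nontrivial Euclidean factor $\widetilde M_t = \mathbb{R}^k \times \widetilde M_t^\perp$ with $k \geq 1$.

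The main obstacle is then to promote the splitting of the blow-down into a splitting of the original flow. My approach would be to argue that in each time slice the $\mathbb{R}^k$-directions of $\widetilde M_t$ coincide with translation-invariant directions of the asymptotic cone $\mathcal C$ of $\Omega_t$: the Type I bound is precisely what is needed to identify the rescaled slice $\tau_j^{-1/2} \Omega_{\tau_j t}$ with $\mathcal C$ in the limit, which in turn forces $\mathcal C$ to be independent of $t$ and invariant under translations in $\mathbb{R}^k$. The convexity and nesting of $\{\Omega_t\}$ then transfer this $\mathbb{R}^k$-invariance to each $\Omega_t$ itself, yielding a splitting $M_t = \mathbb{R}^k \times M_t^\perp$. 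This contradicts $A > 0$ unless $k = 0$, meaning $M_t$ must be compact, and the proof concludes via the classification of the compact case.

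The hardest step is this transfer of the splitting from the blow-down back to $M_t$. Without a Huisken-type monotonicity formula one cannot directly compare $\widetilde M_t$ to a self-similar solution, so the argument must proceed purely through the convex geometry of $\Omega_t$ and the uniform control provided by Type I. In particular, one must rule out the possibility that the $\mathbb{R}^k$-direction of $\widetilde M_t$ corresponds to a direction in $\Omega_t$ that rotates with $t$, which is the most delicate interaction between the Type I bound and the nested structure of convex bodies under the flow.
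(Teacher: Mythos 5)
Your high-level plan tracks the paper's (dimension reduction via blow-down, using the Type I bound to relate the blow-down's final slice to $\mathcal T_\infty \Omega_0$), but the step you flag as "the hardest" is precisely where a genuine gap sits, and your proposed route around it does not work. First, the claim that "a strictly convex domain cannot fill space, so applying the splitting theorem to $\widetilde M_t$ must produce a nontrivial Euclidean factor" is false as stated: \emph{entire} means the spacetime union $\bigcup_{t<0}\widetilde\Omega_t$ exhausts $\mathbb{R}^{n+1}$, not that any single slice $\widetilde\Omega_t$ does, and a shrinking sphere is entire, strictly convex, and splits off no Euclidean factor. Second, even when the blow-down does split as $\mathbb{R}^k\times \widetilde M_t^\perp$, the inference "$\widetilde M_t$ splits $\Rightarrow M_t$ splits, contradicting $A>0$" is false in general: the bowl soliton is strictly convex but blows down to a round cylinder. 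What rescues this in the Type I regime is not a direct transfer of splitting planes but the identification $M_0' := \bigcap_{t<0}\widetilde\Omega_t = \mathcal T_\infty\Omega_0$ (which uses Type I in both inclusions) \emph{together with the fact that $M_0'$ is an affine subspace}. Knowing $M_0'$ is an affine subspace, rather than just a convex cone, requires classifying the blow-down itself.

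This is exactly where the paper departs from the mean curvature flow case. In MCF, Xu-Jia Wang's Lemma 2.9 in \cite{Wang} supplies "entire convex $\Rightarrow$ final slice is affine," and the paper explicitly remarks that this lemma "does not seem to generalise easily to flows by speeds other than the mean curvature." The substitute is Proposition \ref{thm:Wang_FNL}: a full classification of uniformly parabolic, convex, entire $\gamma$-flows satisfying $|A|\le K/\sqrt{-t}$ as shrinking spheres or cylinders. That result is not an application of the splitting theorem but a separate piece of machinery, requiring a Krylov--Safonov Harnack estimate for the speed (Lemma \ref{lem:Harnack}), a degeneration lemma showing that speed decay forces the convex body to limit to a slab (Lemma \ref{lem:tech}), a family of translating paraboloid barriers (Lemma \ref{lem:bowl}), a barrier-based characterisation of shrinking cylinders (Proposition \ref{prop:cyl_char_FNL}), and an induction over the affine content of the final slice (Lemma \ref{lem:Wang_FNL_min} plus translating compactness). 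Your proposal omits all of this, and the vague appeal to "the convex geometry of $\Omega_t$" is not a substitute. As a secondary point, your downward induction on dimension is carried out in the paper, but using the final slice $\mathcal T_\infty\Omega_0 = \Sigma$ to split the \emph{original} flow directly (followed by \cite{Lang-Lyn20} in the compact case), rather than via a strictly-convex reduction step.
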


Let us remark on the assumptions of the theorem. Assuming $\gamma$ is convex or concave is standard, since in this case the estimates of Krylov-Safonov \cite{Kryl-Saf} and Krylov \cite{Kryl82} (see also \cite{Ev82}) ensure the PDE governing the flow have good regularity properties. Flows by concave speeds do not always preserve convexity \cite{AMY}, but flows by inverse-concave speeds do \cite{And07}, and this property is required for our arguments. Uniform parabolicity and convexity are known to hold for blow-up limits of the flow if $\gamma$ is convex \cite{And-Lang-McCoy14}, or if $\gamma$ is one of the concave speeds considered in \cite{Bren-Huisk17} or \cite{Lynch20}.\footnote{We expect further concave examples will be added to this list.} Theorem \ref{thm:main_FNL} yields a classification of Type I singularities for these flows. 

We established the compact case of Theorem \ref{thm:main_FNL} with Langford in \cite{Lang-Lyn20} (cf. \cite{Risa-Sinestrari}). There is also a classification result for uniformly-two convex translators moving by nonlinear speeds due to Bourni-Langford \cite{Bourni-Lang}. Theorem \ref{thm:main_FNL} ought to be a stepping stone to obtaining further uniqueness theorems for ancient solutions - for example, the corresponding statement for Ricci flow was employed in Brendle's groundbreaking work \cite{Brendle20}. Furthermore, while the proof of Theorem \ref{thm:main_FNL} proceeds by a dimension-reduction scheme very similar to that used in Theorem \ref{thm:main_MCF}, implementing this strategy in the fully nonlinear setting requires the development of some new tools. These may be useful in other contexts. 
 
\subsection{Outline} In Section \ref{sec:local_curvature_ests} we use interior curvature estimates to prove a Liouville-type theorem for convex ancient $\gamma$-flows. The result, which plays an important role in the proofs of Theorem \ref{thm:main_MCF} and Theorem \ref{thm:main_FNL}, asserts that the tangent cone at infinity of any uniformly parabolic convex ancient solution lies in a hyperplane. The proof of Theorem \ref{thm:main_MCF} is then carried out in Section \ref{sec:MCF}, and that of Theorem \ref{thm:main_FNL} in Section \ref{sec:FNL}. Appendix \ref{app:splitting} contains proof of a splitting theorem valid for a natural class of flows by concave speeds. 

\section{Interior curvature estimates}
\label{sec:local_curvature_ests}

We begin with an interior curvature estimate for $\gamma$-flows that can be expressed locally as a radial graph. This is \cite{Bren-Huisk17}[Proposition 5.1], but modified slightly to be sharper at large distances. The estimate in \cite{Bren-Huisk17} in turn takes inspiration from an estimate of Tso for convex solutions of the Gauss curvature flow \cite{Tso}, and the Ecker-Huisken interior curvature bound for graphical solutions of mean curvature flow \cite{Eck-Huisk}.

In this section $\gamma$ always denotes an admissible speed, but we do not need to assume convexity or concavity. The following result applies in particular to solutions of mean curvature flow, but in this case we take $\Gamma = \mathbb{R}^n$, so uniform parabolicity is vacuous.

\begin{proposition}
\label{prop:graph_bd}
Fix an admissible speed $\gamma \in C^\infty(\Gamma)$. Let $\{M_t\}_{t\in [-K^2r^2,0]}$ be a uniformly parabolic $\gamma$-flow such that each $M_t$ is the boundary of a connected open set $\Omega_t$. Assume $\Omega_t$ contains $B(p,r)$ for all $ t\in [-K^2 r^2,0]$ and 
\[\inf_{B(p,2Lr) \times [-K^2r^2,0]} \nu(x,t) \cdot \frac{x - p}{|x - p|} \geq \theta,\]
where $L>1$ and $\theta >0$. Then we have
\begin{equation}
\label{eq:graph_bd}
\sup_{B(p, Lr) \times [-K^2 r^2, 0]} \;(L^2r^2 - |x-p|^2 ) (t+K^2r^2)^\frac{1}{2} G \leq C (1+K)L^2 \theta^{-3} r^2
\end{equation}
where $C = C(n, \gamma, \dist(\lambda/G, \partial \Gamma))$.
\end{proposition}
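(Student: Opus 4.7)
I would adapt the Tso--Ecker--Huisken--Brendle--Huisken strategy: apply the parabolic maximum principle to a carefully chosen test function built from the speed $G$, the radial support function $\eta := \nu \cdot (x - p)$, and a space-time cutoff adapted to the cylinder $B(p, Lr) \times [-K^2 r^2, 0]$. The ball hypothesis $B(p, r) \subset \Omega_t$ forces $\eta \geq r$ on the flow, so $\eta - r/2$ is a positive quantity bounded below by $r/2$; this makes it safe to place in the denominator of a test function. The radial graph hypothesis $\nu \cdot (x-p)/|x-p| \geq \theta$ gives the sharper pointwise bound $\eta \geq \theta |x - p|$ on $B(p, 2Lr)$, which I expect to drive the $\theta$-dependence in the final estimate.

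A natural test function is
\[
Q := \frac{(L^2 r^2 - |x - p|^2)^\alpha \,(t + K^2 r^2)^\beta\, G}{\eta - r/2},
\]
with $\alpha, \beta > 0$ to be calibrated (likely $\alpha \in \{1,2\}$ and $\beta = 1/2$). By construction $Q$ vanishes on the parabolic boundary of $B(p, Lr) \times [-K^2 r^2, 0]$, so its maximum is attained at an interior point $(x_0, t_0)$, where $\nabla Q = 0$ and $(\partial_t - L)Q \geq 0$ for $L := \dot\gamma^{ij} \nabla_i \nabla_j$.

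The computation relies on the standard one-homogeneous evolution equations
\begin{align*}
(\partial_t - L) G &= \dot\gamma^{ij} h_i^k h_{kj}\, G, \\
(\partial_t - L) \eta &= \dot\gamma^{ij} h_i^k h_{kj}\, \eta - 2 G, \\
(\partial_t - L) |x - p|^2 &= -2 \tr (\dot\gamma).
\end{align*}
Taking the logarithmic evolution of $Q$ at the maximum, the critical-point identity is used to absorb gradient cross-terms, and the $\dot\gamma^{ij} h_i^k h_{kj}$ contributions from $G$ (with coefficient $+1$) and from the denominator $\eta - r/2$ (with coefficient $-\eta/(\eta - r/2) < -1$) combine to the strictly negative good term $-\dot\gamma^{ij} h_i^k h_{kj} \cdot (r/2)/(\eta - r/2)$. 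By uniform parabolicity one has $\dot\gamma^{ij} h_i^k h_{kj} \geq c_0 G^2$ and $\tr(\dot\gamma) \leq C$ with constants depending only on $n$, $\gamma$ and $\dist(\lambda/G, \partial \Gamma)$, so the inequality $(\partial_t - L)Q \geq 0$ translates into an algebraic upper bound on $G(x_0, t_0)$. Substituting the cutoff bounds $\varphi \leq L^2 r^2$, $\psi \leq Kr$, $\eta \leq |x-p| \leq Lr$, and $\eta \geq \theta |x-p|$ yields the claimed estimate on $\varphi \psi G$.

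The main obstacle is the calibration of $\alpha$, $\beta$, and the shift $r/2$ so as to (i) control the potentially singular cutoff term $|\nabla \varphi^\alpha|^2/\varphi^{2\alpha}$ near $\{|x-p| = Lr\}$, which typically forces $\alpha \geq 2$, (ii) produce an algebraic inequality that is solvable for $G$ at the maximum, and (iii) extract the precise power $\theta^{-3}$. The factor $\theta^{-3}$ should emerge from tracking the interplay between $\eta - r/2 \geq \theta|x-p| - r/2$ in the denominator of the good term and the residual gradient term $|\nabla \eta|^2/(\eta - r/2)^2$, where $|\nabla \eta|^2_g \leq |A|(1 - \theta^2)|x-p|^2$ injects further negative powers of $\theta$ once one uses $\eta \geq \theta|x-p|$ to control $|\nabla \eta|/(\eta - r/2)$. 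Modulo these calibrations the computation is structural and follows the scheme of \cite{Bren-Huisk17}, the novelty lying in the choice of spatial cutoff based on the larger ball $B(p, Lr)$, which captures the correct large-distance scaling $L^2 r^2$.
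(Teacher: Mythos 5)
Your overall strategy is the one the paper follows (a Tso-type pointwise estimate established via the maximum principle, in the spirit of \cite{Bren-Huisk17} and Ecker--Huisken), but the precise auxiliary quantity you choose introduces a genuine gap. You place $\eta - r/2$ with $\eta := \nu\cdot(x-p)$ in the denominator of your test function and assert that $B(p,r)\subset\Omega_t$ forces $\eta\geq r$. That implication requires $\Omega_t$ to be convex, or at least star-shaped so that the tangent hyperplane at each $x\in M_t$ separates $B(p,r)$ from $x$; it is not a consequence of the hypotheses of Proposition \ref{prop:graph_bd}, which only assume $\Omega_t$ is a connected open set (the convex situation is treated separately in Corollary \ref{cor:conv_curv_bd}). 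The bound that the hypotheses actually furnish is $\eta = \nu\cdot(x-p)\geq\theta|x-p|\geq\theta r$, since $|x-p|\geq r$ on $M_t$ and the radial-graph condition gives the factor $\theta$. When $\theta<\tfrac{1}{2}$ this does not keep $\eta-r/2$ positive, so $Q$ is not well defined and the maximum-principle argument cannot start.

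The paper sidesteps exactly this by replacing the unnormalised support function with $w := (\nu\cdot x)^2/|x|^2 - \theta^2/2$ and using $v := w^{-1/2}$ as the control factor, i.e.\ it applies the maximum principle to $\psi := (L^2r^2-|x|^2)\,v\,G$ and then maximises $(t+K^2r^2)^{1/2}\psi$. The quantity $w$ is bounded below by $\theta^2/2$ on $B(0,2Lr)$ purely from the graphical hypothesis, with no convexity needed, and its evolution equation still produces the good term proportional to $\dot\gamma^{ij}A^2_{ij}(w+\theta^2/2)$. You could rescue your version by shifting the denominator to $\eta-\theta r/2$ instead of $\eta-r/2$, but this changes the calibration of the good and bad terms and you would need to re-derive the algebraic inequality at the maximum; switching to the normalised $w$ is cleaner, makes the $\theta$-dependence transparent, and is scale-correct by construction.

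One further minor point: your formula $|\nabla\eta|^2\leq|A|(1-\theta^2)|x-p|^2$ should read $|\nabla\eta|^2\leq|A|^2(1-\theta^2)|x-p|^2$; since $\nabla_i\eta = A_i^k(x-p)^\top_k$, one gets $|\nabla\eta|\leq|A|\,|(x-p)^\top|$ and $|(x-p)^\top|^2 = |x-p|^2-(\nu\cdot(x-p))^2\leq(1-\theta^2)|x-p|^2$.
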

\begin{proof}
Since the proof is very similar to that of Proposition 5.1 in \cite{Bren-Huisk17}, we only provide a sketch. Without loss of generality we may assume $p =0$. For each point $x \in M_t\cap B(0,Lr)$, define
\[\eta = L^2r^2 - |x|^2, \qquad w = \frac{(\nu\cdot x)^2}{|x|^2} - \frac{\theta^2}{2},  \qquad v=w^{-\frac{1}{2}}.\]
Note $w \geq \tfrac{\theta^2}{2}$ in $B(0,2Lr)$, so $v$ is well defined. The theorem is established by applying the maximum principle to the quantity $\psi := \eta v G$. Throughout the proof $C$ is a large constant depending only on $n$, $\gamma$ and a global lower bound for $\dist(\lambda/G, \partial\Gamma)$.

By classical theorems for smooth symmetric functions \cite{Glaeser}, \cite{Schwarz}, in any local orthonormal frame $G$ can be viewed interchangeably as a smooth function of the principal curvatures or the components of the second fundamental form. Writing $\dot \gamma^{ij}:= \tfrac{\partial \gamma}{\partial A_{ij}}$, we have the evolution equations
\begin{align*}
(\partial_t - \dot \gamma^{ij} \nabla_i\nabla_j ) |x|^2 ={} - 2 \dot \gamma^{ij} g_{ij},\\
(\partial_t - \dot \gamma^{ij}\nabla_i\nabla_j )  (\nu\cdot x) ={} \dot \gamma^{ij} A^2_{ij} (\nu\cdot x) - 2G,
\end{align*}
where $g$ denotes the induced metric. From these we compute
\begin{align*}
(\partial_t - \dot \gamma^{ij}\nabla_i\nabla_j) w = {}& 2(\dot \gamma^{ij}A^2_{ij} - 2(x\cdot \nu)^{-1}G)  (w+\theta^2/2)- \frac{1}{2(w + \theta^2/2)} \dot \gamma^{ij}\nabla_iw\nabla_jw\\
& +\dot \gamma^{ij}\nabla_i w \frac{\nabla_j |x|^2}{|x|^2} +\frac{(w + \theta^2/2)}{|x|^2}\bigg( 2\dot \gamma^{ij} g_{ij}- \dot \gamma^{ij}\frac{\nabla_i|x|^2\nabla_j|x|^2}{2|x|^4}\bigg).
\end{align*}
Since $|\nabla |x|^2| \leq 2|x|$ we observe the last term on the right is nonnegative, and with Young's inequality we obtain
\begin{align*}
(\partial_t - \dot \gamma^{ij}\nabla_i\nabla_j) w \geq{}&2(\dot\gamma^{ij} A_{ij}^2 - 2(x\cdot \nu)^{-1}G)  (w+\theta^2/2)- \frac{1 + \theta^2/4}{2(w + \theta^2/2)} \dot\gamma^{ij}\nabla_iw\nabla_jw\\
& -C\theta^{-2}r^{-2} (w + \theta^2/2).
\end{align*}
We have $\dot \gamma^{ij} \geq C^{-1} g_{ij}$ and $ x \cdot \nu \geq \theta r$, so using Young's inequality again we find
\[\dot\gamma^{ij}A_{ij}^2 - 2(x\cdot\nu)^{-1}G \geq C^{-1}G^2 - C\theta^{-2}r^{-2},\]
and thus obtain
\begin{align*}
(\partial_t - \dot \gamma^{ij}\nabla_i\nabla_j) w \geq{}&2(C^{-1}G^2-C\theta^{-2}r^{-2})  (w+\theta^2/2)- \frac{1 + \theta^2/4}{2(w + \theta^2/2)} \dot\gamma^{ij}\nabla_iw\nabla_jw\\
& -C\theta^{-2}r^{-2} (w + \theta^2/2).
\end{align*}

Using the evolution equations
\[(\partial_t-\dot\gamma^{ij}\nabla_i\nabla_j)\eta = 2\dot \gamma^{ij} g_{ij}, \qquad (\partial_t -\dot\gamma^{ij}\nabla_i\nabla_j)G = \dot\gamma^{ij}A^2_{ij} G\]
we now proceed exactly as in \cite{Bren-Huisk17}[Proposition 5.1] to find
  \begin{align}
  \label{eq:interior_evol}
(\partial_t - \dot\gamma^{ij}\nabla_i \nabla_j)\psi \leq &  - C^{-1} \theta^2 \eta^{-2} \psi^3 + C L^2 \theta^{-1}  r \eta^{-2} \psi^2+CL^4 \theta^{-2}r^{2} \eta^{-2}\psi \notag\\
&  -2 \dot\gamma^{ij} (\eta^{-1} \nabla_i\eta +v^{-1} \nabla_i v) \nabla_j\psi.
\end{align} 
Therefore, if $(\bar x, \bar t)$ is chosen to satisfy
\[(\bar t +K^2 r^2)^\frac{1}{2} \psi( \bar x, \bar t) := \sup\Big\{(t+K^2r^2)^\frac{1}{2} \psi(x,t) : x \in B(0,Lr), \; t \in [-K^2 r^2, 0]\Big\},\] 
evaluating \eqref{eq:interior_evol} at $(\bar x, \bar t)$ gives
\begin{align*}
C^{-1}\theta^2  \psi^3\leq {}&  CL^2\theta^{-1} r \psi^2  +CL^4 \theta^{-2} r^2  \psi+ \frac{1}{2}(\bar t+K^2r^2)^{-1}\eta^2\psi.
\end{align*}
If $\psi(\bar x, \bar t) \geq 10 C^2 L^2 \theta^{-3}r $ we obtain
\[(\bar t +K^2r^2) \psi(\bar x, \bar t)^2 \leq C \theta^{-2} \eta^2 \leq  C L^4\theta^{-2} r^4.\]
If on the other hand $\psi(\bar x, \bar t) \leq 10 C^2 L^2 \theta^{-3}r$, then
\begin{align*}
(\bar t + K^2 r^2)^\frac{1}{2} \psi(\bar x, \bar t) \leq CKL^2 \theta^{-3} r^2.
\end{align*}
The claim follows by combining these two inequalities.
\end{proof}

The boundary of any smooth convex domain is locally a radial graph, so we obtain the following corollary of Proposition \ref{prop:graph_bd} for convex solutions.

\begin{corollary}
\label{cor:conv_curv_bd}
Fix an admissible speed $\gamma\in C^\infty(\Gamma)$. Suppose $\{M_t =\partial \Omega_t\}_{t\in [-K^2r^2,0]}$ is a uniformly parabolic convex $\gamma$-flow. Then if $B(p,r) \subset \Omega_0$, given $L >1$ we have
\begin{equation*}
\sup_{B(p, Lr) \times [-K^2 r^2, 0]}  \;(L^2 r^2 - |x-p|^2 ) (t+K^2r^2)^\frac{1}{2}  G \leq C (1+K) L^5 r^2,
\end{equation*}
where $C=C(n,\gamma,\dist(\lambda/G, \partial \Gamma))$.
\end{corollary}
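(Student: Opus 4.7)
The plan is to reduce Corollary \ref{cor:conv_curv_bd} directly to Proposition \ref{prop:graph_bd} by verifying its hypotheses via convexity, and to track carefully the dependence of the resulting constant on $L$.

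First I would observe that the flow is inward-moving: since $\gamma>0$ on the set of admissible principal curvature vectors and the flow is convex, we have $G>0$, so $\partial_t F = -G\nu$ points into $\Omega_t$. Consequently $\Omega_{t_2}\subset\Omega_{t_1}$ whenever $t_1 < t_2$, and in particular $B(p,r)\subset\Omega_0$ implies $B(p,r)\subset\Omega_t$ for every $t\in[-K^2r^2,0]$. This gives the inclusion hypothesis needed to apply Proposition \ref{prop:graph_bd}.

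Next I would establish the graphicality bound. Fix $t\in[-K^2r^2,0]$ and $x\in M_t$. Since $\Omega_t$ is convex, the tangent hyperplane at $x$ is a supporting hyperplane, so every point $q\in\overline{B(p,r)}$ satisfies $\nu(x)\cdot(q-x)\leq 0$. Choosing $q = p + r\,\nu(x)\in\overline{B(p,r)}\subset\overline{\Omega_t}$ yields
\[
\nu(x)\cdot (x-p) \;\geq\; r.
\]
For $x\in B(p,2Lr)$ this gives
\[
\nu(x)\cdot\frac{x-p}{|x-p|} \;\geq\; \frac{r}{|x-p|} \;\geq\; \frac{1}{2L} \;=:\; \theta.
\]
Thus the radial graphicality hypothesis of Proposition \ref{prop:graph_bd} holds on $B(p,2Lr)\times[-K^2r^2,0]$ with $\theta = 1/(2L)$.

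Finally, plugging $\theta = 1/(2L)$ into the conclusion of Proposition \ref{prop:graph_bd} gives
\[
\sup_{B(p,Lr)\times[-K^2r^2,0]} (L^2r^2 - |x-p|^2)(t+K^2r^2)^{1/2}G \;\leq\; C(1+K)L^2\theta^{-3}r^2 \;=\; 8C(1+K)L^5 r^2,
\]
which is the desired estimate after absorbing the numerical factor $8$ into $C$. There is no serious obstacle here; the only subtlety is recognising that convexity plus the one-sided ball inclusion gives the sharp support-function bound $\nu\cdot(x-p)\geq r$, and then being careful with the resulting $L$-dependence so that the cubic factor $\theta^{-3}$ produces exactly $L^5$ in the final inequality.
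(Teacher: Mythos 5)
Your proof is correct and follows the same reduction as the paper: establish $B(p,r)\subset\Omega_t$ from monotonicity, derive a radial graphicality lower bound from convexity, and apply Proposition \ref{prop:graph_bd} with $\theta\sim 1/L$. Your derivation of the bound $\nu(x,t)\cdot(x-p)\geq r$ via the supporting hyperplane at $x$ and the test point $q=p+r\nu(x)$ is slightly cleaner and a bit sharper than the paper's convex-hull argument, which gives $\nu\cdot\tfrac{x-p}{|x-p|}\geq r/\sqrt{r^2+|x-p|^2}$; both yield $\theta^{-3}L^2\sim L^5$, so the final estimate is the same.
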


\begin{proof}
Since the sets $\Omega_t$ move inward with increasing $t$, $B(p,r)$ is contained in $\Omega_t$ for all $t \leq 0$. If $x \in M_t$, since $\Omega_t$ is convex, $\cl(\Omega_t)$ contains the convex hull of $x$ and the equator in $\partial B(p,r)$ orthogonal to $x-p$. From this observation we deduce, for $L>1$ and $|x-p|\leq 2Lr$,
\[\nu(x,t) \cdot \frac{x-p}{|x-p|} \geq \frac{r}{\sqrt{ r^2 + |x-p|^2}} \geq \frac{1}{\sqrt{1+4L^2}} ,\]
so inside $B(p,2L r)$ we may apply Proposition \ref{prop:graph_bd} with $\theta = \tfrac{1}{3}L^{-1}$. 
\end{proof}

\subsection{Tangent cone at infinity}

Let $\Omega$ be a convex domain in $\mathbb{R}^{n+1}$. For any positive sequence $a_j \to 0$, the sequence of rescaled domains $a_j \Omega$ converges to a closed convex cone in the Hausdorff topology. We refer to the limiting cone, which is independent of the sequence $a_j$, as the {\bf tangent cone at infinity} of $\Omega$, and denote it $\mathcal T_\infty \Omega$. If $\Omega$ is noncompact, for each $p \in \Omega$ the set $p+\mathcal T_\infty \Omega$ is the largest closed cone with vertex at $p$ which lies in the closure of $\Omega$. If $\mathcal T_\infty \Omega$ is compact then $\mathcal T_\infty \Omega = \{0\}$ and $\Omega$ is precompact.

For a convex set $\Omega \in \mathbb{R}^{n+1}$, $\dim \Omega $ is defined to be the dimension of the smallest affine subspace of $\mathbb{R}^{n+1}$ containing $\Omega$. Using Corollary \ref{cor:conv_curv_bd} we deduce that the tangent cone at infinity of a uniformly parabolic convex ancient $\gamma$-flow cannot be $(n+1)$-dimensional. 

\begin{proposition}
\label{prop:asymptotic_cone}
Fix an admissible speed $\gamma \in C^\infty(\Gamma)$. If $\{M_t=\partial \Omega_t\}_{t\in(-\infty,0]}$ is a uniformly parabolic convex $\gamma$-flow then $\dim\mathcal T_\infty \Omega_0 \leq n$.  
\end{proposition}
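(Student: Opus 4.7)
The plan is to argue by contradiction. Suppose $\dim \mathcal{T}_\infty \Omega_0 = n+1$, so that $\mathcal{T}_\infty \Omega_0$ has nonempty interior as a convex set in $\mathbb{R}^{n+1}$. Choose $v \in \mathbb{R}^{n+1}$ and $\delta > 0$ with $B(v, \delta) \subset \mathcal{T}_\infty \Omega_0$. Combining the characterisation of the tangent cone at infinity recalled just before the statement --- namely, $p + \mathcal{T}_\infty \Omega_0 \subset \overline{\Omega_0}$ for every $p \in \overline{\Omega_0}$ --- with the scaling $B(\lambda v, \lambda \delta) = \lambda B(v,\delta) \subset \mathcal{T}_\infty \Omega_0$ for $\lambda > 0$, one sees that for every $x \in \overline{\Omega_0}$ and $\lambda > 0$,
\[
B(x + \lambda v, \lambda \delta) \subset \overline{\Omega_0}.
\]
Since $\Omega_0$ is open, this open ball must actually lie in $\Omega_0$.

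With this geometric setup in place, I would fix an arbitrary $x \in M_0$ and, for each $\lambda > 0$, invoke Corollary \ref{cor:conv_curv_bd} with $p = x + \lambda v$, $r = \lambda \delta$, and any $K > 0$. A constant $L > \max\{1, |v|/\delta\}$ (so that $L > 1$ and $x \in B(p, Lr)$) can then be held fixed throughout. Evaluating the estimate of Corollary \ref{cor:conv_curv_bd} at the point $(x, 0)$ produces
\[
G(x, 0) \,\leq\, \frac{C(1 + K)}{K \lambda},
\]
where $C$ depends only on $n$, $\gamma$, the uniform parabolicity constant, and the fixed quantities $L$, $|v|$, $\delta$. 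Because the flow is ancient I am free to first send $K \to \infty$ (making $(1+K)/K \to 1$) and then $\lambda \to \infty$, which forces $G(x, 0) = 0$. Since $x \in M_0$ was arbitrary, $G \equiv 0$ on $M_0$; this contradicts uniform parabolicity --- which requires $\lambda/G$ to lie in a compact subset of the open cone $\Gamma$, and in particular forces $G > 0$ --- completing the proof.

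The main obstacle is the geometric step of producing the inscribed balls with the right scaling. Everything hinges on choosing $v$ in the \emph{interior} of $\mathcal{T}_\infty \Omega_0$: only then does the ratio $|x - p|/r = |v|/\delta$ stay independent of $\lambda$, keeping the factor $L^2 r^2 - |x - p|^2$ in Corollary \ref{cor:conv_curv_bd} comparable to $r^2 = \lambda^2 \delta^2$. If instead $v$ lay on the boundary of $\mathcal{T}_\infty \Omega_0$, inscribed balls of linearly-growing radius through $x$ would not be available, and the $\lambda^{-1}$-decay needed to conclude $G(x,0) = 0$ would break down.
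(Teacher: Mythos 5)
Your argument is correct and follows essentially the same route as the paper: pick an inscribed ball in the tangent cone, use the cone-scaling to produce inscribed balls in $\Omega_0$ of radius growing linearly in the scale, apply Corollary \ref{cor:conv_curv_bd} to those balls, and let the scale go to infinity to force the speed to vanish at a fixed spacetime point. The only cosmetic differences are that the paper normalises $G(0,0)=1$ and lets the balls $B(rp,r)$ grow through the origin by homothety, whereas you translate by $x$ and let the balls $B(x+\lambda v,\lambda\delta)$ grow through an arbitrary boundary point; and your extra limit $K\to\infty$ is unnecessary since $(1+K)/K$ is already bounded for $K\geq 1$.
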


\begin{proof}
Translate so $0 \in M_0$ and perform a parabolic rescaling to arrange $G(0,0)=1$. Suppose for a contradiction $ \dim \mathcal T_\infty \Omega_0 =n+1$. Then there is an open ball $B(p,\delta) \subset \mathcal T_\infty \Omega_0$. To ease notation, suppose $\delta =1$ (otherwise, we can scale the ball). Since $\mathcal T_\infty \Omega_0$ is a cone, for each $r \geq 0$ we have 
\[B(r p, r) \subset \mathcal T_\infty\Omega_0\subset \cl(\Omega_0).\]
Setting $L := 4\hspace{0.5mm}|p|$ ensures the balls $B(r p, \tfrac{Lr}{2})$ sweep out $\mathbb{R}^{n+1}$ as $r \to \infty$, but Corollary \ref{cor:conv_curv_bd} implies
\[G \leq C L^3 r^{-1}\]
inside $B(r p,\tfrac{Lr}{2}) \times [-\tfrac{L^2r^2}{4},0]$, where $C$ depends on $n$, $\gamma$ and the quality of the uniform parabolicity. In particular, for all sufficiently large $r$, 
\[1 = G(0,0) \leq C L^3 r^{-1},\]
which is the desired contradiction. 
\end{proof}

We note Proposition \ref{prop:asymptotic_cone} fails dramatically if the solution is not assumed to be ancient. For example, there are plenty of expanding solutions of mean curvature flow which are convex and asymptotic to an $(n+1)$-dimensional cone \cite{Eck-Huisk}.

\section{Mean curvature flow}
\label{sec:MCF}

In this section we establish Theorem \ref{thm:main_MCF}. To that end, let $\{M_t=\partial \Omega_t\}_{t\in(-\infty,0]}$ be a Type I convex ancient solution of mean curvature flow in $\mathbb{R}^{n+1}$, $n \geq 2$. To be clear we assume there is a global parameterisation $F:M\times(-\infty,0] \to \mathbb{R}^{n+1}$ such that $M_t = F(M,t)$ and $\partial_t F = - H\nu$, where $\nu$ is the outward normal to $M_t$. Every convex ancient solution with bounded second fundamental form admits such a parameterisation, so this does not constitute a loss of generality. 

If the mean curvature $H$ of $M_t$ vanishes somewhere the strong maximum principle implies $\{M_t\}_{t\in(-\infty,0]}$ is either a stationary hyperplane or pair of hyperplanes, so let us assume $H >0$. Since $\{M_t\}_{t\in(-\infty,0]}$ is Type I there is a time $T\leq 0$ and a constant $K<\infty$ such that
\[\sup_{t \leq T} \,\sup_{M_t} \sqrt{-t} \, H \leq \frac{K}{2}.\] 
To establish Theorem \ref{thm:main_MCF} we need to show $\{M_t\}_{t\in(-\infty,0]}$ is a family of homothetically shrinking spheres or cylinders. This is true if and only if $\{M_{t+T}\}_{t\in(-\infty,-T]}$ is a family of homothetically shrinking spheres or cylinders, so without loss of generality assume
\begin{equation}
\label{eq:Type I MCF}
\sup_{t\leq 0} \, \sup_{M_t} \sqrt{-t} \, H \leq \frac{K}{2}.
\end{equation}
Shifting the solution in space, we can also arrange $0 \in M_0$.

We begin by integrating the Type I property to obtain an upper bound for the displacement of $M_t$ from $\Omega_0$. Given a set $X \in \mathbb{R}^{n+1}$ and a constant $R\geq 0$, let us write
\[D(X,R) := \{p\in\mathbb{R}^{n+1}:\dist(p,X)\leq R\}.\]

\begin{lemma}
\label{lem:type-I_envelope}
For every $t\leq 0$, 
\[M_t \subset D(\Omega_0 , K\sqrt{-t}).\]
\end{lemma}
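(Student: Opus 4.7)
The plan is to exploit the global parameterisation $F : M \times (-\infty,0] \to \mathbb{R}^{n+1}$ directly: for each $p \in M_t$, we trace the flow line through $p$ forward to time $0$ and bound its length using the Type I hypothesis. Concretely, given $p \in M_t$ write $p = F(x,t)$ for some $x \in M$, and consider the curve $\sigma(s) := F(x,s)$ for $s \in [t,0]$. Then $\sigma(0) \in M_0 \subset \cl(\Omega_0)$, and since $\partial_s F = -H \nu$ we have $|\sigma'(s)| = H(x,s)$.

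The bound on the length of $\sigma$ will then follow by integrating the Type I estimate \eqref{eq:Type I MCF}: for $s \in [t,0)$ we have $H(x,s) \leq \tfrac{K}{2}(-s)^{-1/2}$, so
\[
\dist(p,\Omega_0) \leq \mathrm{length}(\sigma) \leq \int_t^0 H(x,s)\, ds \leq \int_t^0 \frac{K}{2\sqrt{-s}}\, ds = K\sqrt{-t}.
\]
The improper integral converges despite the singular factor at $s=0$, which is precisely the virtue of the Type I decay of the speed: it is integrable in $s$. Thus $p \in D(\Omega_0, K\sqrt{-t})$, which is the desired inclusion.

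There isn't really a substantive obstacle here; the only issue worth checking is that the curve $\sigma$ is well-defined and $C^1$ all the way up to $s=0$, which is guaranteed by the assumption that $F$ is a smooth global parameterisation on $M \times (-\infty,0]$ (this is why the author paused to record that assumption just before the lemma). One could alternatively avoid introducing the parameterisation by arguing directly with the signed distance function to $\Omega_0$ and its evolution under the flow, but the trajectory argument above is more elementary and matches the form of the estimate exactly.
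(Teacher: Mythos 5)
Your argument is correct and is essentially identical to the paper's proof: both trace the flow line $s \mapsto F(x,s)$ from $(x,t)$ to time $0$ and bound its length by integrating the Type I speed estimate $H \leq \tfrac{K}{2}(-s)^{-1/2}$, yielding $K\sqrt{-t}$. The paper simply states the chain of inequalities more tersely, without introducing the curve $\sigma$ by name.
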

\begin{proof}
Each $x \in M_t$ can be written as $F(p,t)$ for some $p \in M$. Using \eqref{eq:Type I MCF} we find that
\begin{align*}
|F(p,0) - F(p,t) | &\leq \int_{t}^{0} |H(p,\tau)|\, d\tau \leq K\sqrt{-t},
\end{align*}
hence 
\[\dist(x, \Omega_0) = \dist(F(p,t), \Omega_{0}) \leq K\sqrt{-t}.\]
\end{proof}

Next, we show the mean curvature of $M_t$ does not fall off too quickly near the origin as $t \to -\infty$. 

\begin{lemma}
\label{lem:lower-speed_MCF}
For each $L>0$ there is a positive constant $c_L$ such that
\[\sqrt{1 -t}\, H(x,t) \geq c_L \]
for every $t\leq 0$ and $x\in M_t \cap B(0,L\sqrt{-t} )$. 
\end{lemma}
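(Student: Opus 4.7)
The plan is to argue by contradiction via a parabolic blow-down. Suppose the lemma fails: there exist $L>0$, $t_j\to-\infty$ and points $x_j\in M_{t_j}\cap B(0,L\sqrt{-t_j})$ with $\sqrt{-t_j}\,H(x_j,t_j)\to 0$. Set $\lambda_j:=1/\sqrt{-t_j}$ and consider the rescaled convex mean curvature flows
\[
\tilde M_s^j\;:=\;\lambda_j M_{t_j(1-s)}, \qquad s\in(-\infty,1],
\]
whose speed $\tilde H=\sqrt{-t_j}\,H(\cdot,t_j(1-s))$ satisfies $\tilde H\le K/(2\sqrt{1-s})$ by \eqref{eq:Type I MCF}. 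Since each $\tilde M_s^j$ is convex, $|\tilde A|\le\tilde H$. The marked points $\tilde x_j:=\lambda_j x_j$ lie in $\bar B(0,L)\cap\tilde M_0^j$, pinning the flows in a fixed compact region, so standard compactness for MCF (area bounds from convexity, the curvature bounds above, Ecker--Huisken interior estimates for higher derivatives of $A$) yields, along a subsequence, a smooth limit $\tilde M_s^\infty$, a convex ancient MCF on $(-\infty,1)$, passing at $s=0$ through $\tilde x^\infty:=\lim\tilde x_j$ with $\tilde H^\infty(\tilde x^\infty,0)=0$.

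Since $\tilde H^\infty\ge 0$ solves $(\partial_s-\Delta)\tilde H^\infty=|\tilde A^\infty|^2\tilde H^\infty$ and vanishes at one point, the strong maximum principle forces $\tilde H^\infty\equiv 0$ on the connected component of $\tilde M^\infty$ through $\tilde x^\infty$; that component is therefore a stationary hyperplane $\Pi$ containing $\tilde x^\infty$. Meanwhile, rescaling Lemma~\ref{lem:type-I_envelope} gives
\[
\tilde M_s^j\subset D(\lambda_j\Omega_0,K\sqrt{1-s}),
\]
and since $\lambda_j\to 0$ implies $\lambda_j\Omega_0\to\mathcal T_\infty\Omega_0$ in Hausdorff on compacts, passing to the limit yields $\Pi=\tilde M_s^\infty\subset D(\mathcal T_\infty\Omega_0,K\sqrt{1-s})$ for every $s<1$. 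Letting $s\to 1^-$ shows $\Pi\subset\mathcal T_\infty\Omega_0$.

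Proposition~\ref{prop:asymptotic_cone} gives $\dim\mathcal T_\infty\Omega_0\le n$, so the containment $\Pi\subset\mathcal T_\infty\Omega_0$ combined with the fact that $\mathcal T_\infty\Omega_0$ is a closed convex cone with vertex $0$ forces $0\in\Pi$ (otherwise the closed cone generated by $\Pi$ would be a half-space, contradicting the dimension bound), and then $\mathcal T_\infty\Omega_0=\Pi$. Extending by continuity the inclusion $p+\mathcal T_\infty\Omega_0\subset\bar\Omega_0$ from $p\in\Omega_0$ to $p\in\bar\Omega_0$, we conclude $\bar\Omega_0$ is translation-invariant under $\Pi$; convexity then yields $\bar\Omega_0=\Pi\times C$ for some interval $C\subset\Pi^\perp$. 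Thus $M_0=\partial\Omega_0$ is a union of hyperplanes parallel to $\Pi$, contradicting the standing assumption that $H>0$ on $M_0$.

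The principal obstacle is the smooth compactness used to extract $\tilde M^\infty$: in the absence of noncollapsing, the convex-hypersurface inequality $|A|\le H$ is what promotes Type~I speed control to the curvature control required by parabolic regularity. Once the limit is in hand, the subsequent reasoning---strong maximum principle, the envelope estimate of Lemma~\ref{lem:type-I_envelope}, and the tangent-cone dimension bound of Proposition~\ref{prop:asymptotic_cone}---slots together neatly to close the argument.
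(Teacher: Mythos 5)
Your proof follows the paper's strategy very closely: rescale a minimizing sequence, extract a limit whose mean curvature vanishes at a point, invoke the strong maximum principle to find a hyperplane in the blow-down, and play that against the envelope estimate of Lemma~\ref{lem:type-I_envelope} and the dimension bound of Proposition~\ref{prop:asymptotic_cone} to force $\mathcal T_\infty\Omega_0$ to be a hyperplane, contradicting $H>0$. The one place where you take a shortcut that the paper deliberately avoids is the assertion that ``standard compactness'' produces a \emph{complete} convex ancient flow $\tilde M^\infty$, on whose component through $\tilde x^\infty$ you can then apply the strong maximum principle to get a \emph{full} hyperplane. At this stage of the argument (i.e.\ before the non-collapse of Lemma~\ref{lem:displacement} is available) the rescaled convex bodies $\tilde\Omega^j_0$ may a priori collapse onto a lower-dimensional convex set, and the curvature bound $|\tilde A^j|\le K$ plus Ecker--Huisken give only a \emph{local} $C^\infty$ graph limit over a ball of scale $\sim 1/K$; completeness of the limiting component and the multiplicity issues it raises are precisely what the paper's iteration (show $B_\rho\subset\mathcal K$, repeat at finitely many points near $\partial B_\rho$, conclude $B_{3\rho/2}\subset\mathcal K$, and so on) is engineered to handle without ever asserting a global smooth limit. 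Your assertion can be justified via a pointed Hamilton--Cheeger--Gromov argument, but it is doing real work and should not be waved through: if you only obtained a piece of a hyperplane, your elegant $s\to 1^-$ device would only give $B_\rho\subset\mathcal T_\infty\Omega_0$, which is not enough. Once granted, the remainder of your argument is correct, and the passage $\Pi\subset D(\mathcal T_\infty\Omega_0,K\sqrt{1-s})$ for all $s<1$ followed by $s\to 1^-$ is in fact a slightly cleaner route to $\mathcal T_\infty\Omega_0=\Pi$ than the paper's argument via $\mathcal K\subset D(\mathcal T_\infty\Omega_0,K)$.
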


\begin{proof}
We argue by contradiction. The hypersurfaces $M_t$ are strictly mean-convex, so if the claim is false for some $L>0$ there is a sequence of times $t_j \to -\infty$ and points $x_j \in M_{t_j} \cap B(0, L\sqrt{-t_j})$ such that 
\[\sqrt{-t_j}\,H(x_j,t_j) \to 0.\]
Let us write $a_j = 1/\sqrt{-t_j}$ and define a sequence of rescaled solutions
\[ \{M_t^j:=a_j M_{a_j^{-2} (t-1)}\}_{t \in (-\infty,0]}.\] 
By Lemma \ref{lem:type-I_envelope}, 
\[M_{0}^j = a_j M_{-a_j^{-2}} \subset a_j D(\Omega_0, Ka_j^{-1}) = D(a_j \Omega_0, K).\]
We use an upper index $j$ to indicate quantities related to $M_t^j$, so $H^j$ is the mean curvature of $M_t^j$, etc. Convexity implies $|A^j|\leq H^j$, so by the Type I property $|A^j| \leq K$ at times $t \leq 0$. Set $y_j := a_j x_j$.

Let us pass to a subsequence so $\Omega_{0}^j$ converges to a closed convex limit $\mathcal K$ with respect to the Hausdorff topology. We use a compactness argument and the strong maximum principle to show $H^j(y_j,0) \to 0$ implies $\mathcal K$ contains a hyperplane, as follows.

Since $y_j \in B(0,L)$ we may pass to a subsequence to ensure $y_j$ and $\nu^j(y_j,0)$ both converge as $j \to \infty$. Applying a rigid motion in the ambient space, we may assume 
\[y_j \to 0, \qquad \nu^j(y_j,0) \to e_{n+1}.\]
Let $U^j$ denote the largest connected neighbourhood of $y_j$ in $M_0^j$ in which $\nu^j \cdot e_{n+1} > \tfrac{1}{2}$, and write $P$ for the orthogonal projection from $\mathbb{R}^{n+1}$ to $\{x_{n+1} = 0\}$. The bound $|A^j|\leq K$ implies there is a $\rho' = \rho'(K)$ such that 
\[B_{\rho'} \subset P(U^j)\]
for all large $j$, where $B_r:=B(0,r) \cap \{x_{n+1} =0\}$, so by the inverse function theorem $P^{-1}(B_{\rho'}) \cap U^j$ is the graph of a smooth function $u^j : B_{\rho'} \to \mathbb{R}$. Define
\[Z_r^j := P^{-1}(B_r) \cap\{ x_{n+1} \geq u^j(x_1, \dots, x_n)\}.\]
Then since $\Omega_t^j$ is convex, $M_t^j \cap Z_r^j$ is a graph over $B_{r}$ for all $r < \rho'$ and $t\leq 0$. Moreover, there is a $\rho = \rho(K)\leq \rho'$ such that $\nu^j \cdot e_{n+1} \geq \tfrac{1}{2}$ in $M_t^j \cap Z_\rho^j$ for all $t > -\rho^2$. Let us extend $u^j$ to a smooth function on $Q_{\rho} := B_\rho \times(-\rho^2, 0]$ such that 
\[M_t^j \cap Z_\rho^j = \graph u^j (\cdot, t)\]
for each $t \in (-\rho^2, 0]$. 

Since $|A^j | \leq K$ for all $t \leq 0$ the Ecker-Huisken interior estimates \cite{Eck-Huisk} imply we can pass to a subsequence so the functions $u^j$ converge to a limit $u'$ in $C^\infty(Q_\rho)$. The hypersurfaces $N_t := \graph u'(\cdot,t)$ have nonnegative second fundamental form and move by mean curvature flow. Moreover, $H(y_j,0) \to 0$ implies the mean curvature of $N_0$ vanishes at the origin. Thus the strong maximum principle implies $N_0$ is a piece of a hyperplane. Recalling $y_j \to 0$ and $\nu^j(y_j,0) \to e_{n+1}$ we observe $N_0 = B_\rho$. 

Thus $\mathcal K$ contains $B_\rho$. Moreover, for each $z \in B_\rho$ there is a sequence $z_j \in M_0^j$ such that 
\[z_j \to 0, \qquad \nu^j(z_j,0) \to e_{n+1}, \qquad H^j(z_j,0) \to 0.\]
Therefore, repeating the above argument at finitely many points near $\partial B_\rho$ we conclude $B_{3\rho/2} \subset \mathcal K$. Continuing in this way we see, for any $R<\infty$, $B_R \subset \mathcal K$. That is, $\{x_{n+1} = 0\} \subset \mathcal K$. 

To finish, we show $\{x_{n+1} = 0\} \subset \mathcal K$ is incompatible with the Type I property. Lemma \ref{lem:type-I_envelope} tells us $\Omega^j_0 $ is a subset of $D(a_j\Omega_0, K)$, so passing to the limit we obtain 
\[\mathcal K \subset D(\mathcal T_\infty \Omega_0, K).\]
In particular, $D(\mathcal T_\infty \Omega_0, K)$ contains $\{x_{n+1} = 0\}$. Therefore, since $D(\mathcal T_\infty \Omega_0, K)$ is convex, it is the product of $\{x_{n+1} = 0\}$ with an interval. On the other hand $\mathcal T_\infty \Omega_0$ lies in a hyperplane by Proposition \ref{prop:asymptotic_cone}, and $0 \in \mathcal T_\infty \Omega_0$, so in fact
\[\mathcal T_\infty \Omega_0 = \{x_{n+1} = 0\},\]
hence $\Omega_t$ contains $\{x_{n+1} = 0\}$ for all $t <0$. By convexity, $\Omega_t$ is the product of $\{x_{n+1} = 0\}$ with an interval. That is, $M_t$ is a stationary hyperplane or pair of hyperplanes. However $M_t$ was assumed to have positive mean curvature, so this is a contradiction. 
\end{proof}

With the lower speed bound of Lemma \ref{lem:lower-speed_MCF} in hand we readily establish a lower bound for the displacement of $M_t$ from the origin. 

\begin{lemma}
\label{lem:displacement}
The ball $B(0, c_K \sqrt{-t})$ is contained in $\Omega_t$ for every $t \leq -3$. 
\end{lemma}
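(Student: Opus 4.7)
The plan is to bound $\rho(t) := \dist(0, M_t)$ from below by integrating the lower speed estimate of Lemma \ref{lem:lower-speed_MCF} along the closest point on $M_t$ to the origin. Since $0 \in M_0$, $\rho(0) = 0$; and for $t < 0$ the inclusion $\bar\Omega_0 \subset \Omega_t$ (a consequence of $H > 0$ and the strong maximum principle) places $0$ in $\interior \Omega_t$, so the infimum defining $\rho(t)$ is attained at some $y_t \in M_t$.

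First I would establish an a priori upper bound on $\rho$ so that every closest point lies in the region where Lemma \ref{lem:lower-speed_MCF} applies. Let $p_0 \in M$ be the point with $F(p_0, 0) = 0$. Integrating $\partial_t F = -H\nu$ and using the Type I bound \eqref{eq:Type I MCF},
\[
|F(p_0, t)| \leq \int_t^0 H(p_0, \tau)\, d\tau \leq \int_t^0 \frac{K}{2\sqrt{-\tau}}\, d\tau = K\sqrt{-t},
\]
so $\rho(t) \leq K\sqrt{-t}$. In particular $y_t \in B(0, K\sqrt{-t}) \subset B(0, K\sqrt{1-t})$, and Lemma \ref{lem:lower-speed_MCF} applied with $L = K$ yields $H(y_t, t) \geq c_K/\sqrt{1-t}$.

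Next I would bound the upper right Dini derivative of $\rho$ by an envelope argument. Since $\rho = \inf_p |F(p, \cdot)|$ and the time derivatives of $|F(p, \cdot)|$ are uniformly bounded on compact subintervals of $(-\infty, 0)$ by the Type I estimate, $\rho$ is locally Lipschitz there. Fix $t$ and pick a closest point $y_t = F(q_t, t)$. Because $0 \in \interior \Omega_t$, convexity forces $\nu(q_t, t) = y_t/|y_t|$, so
\[
\frac{d}{ds}\bigg|_{s=t} |F(q_t, s)| = -H(q_t, t)\, \frac{F \cdot \nu}{|F|}(q_t, t) = -H(y_t, t).
\]
Since $\rho(t+h) \leq |F(q_t, t+h)|$ for all $h$ with equality at $h = 0$, passing to $\limsup_{h \to 0^+}$ gives $D^+\rho(t) \leq -H(y_t, t) \leq -c_K/\sqrt{1-t}$. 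The standard integration lemma for Dini derivatives of absolutely continuous functions then yields, using $\rho(0) = 0$,
\[
\rho(t_0) \geq \int_{t_0}^0 \frac{c_K}{\sqrt{1-\tau}}\, d\tau = 2c_K\bigl(\sqrt{1-t_0} - 1\bigr),
\]
and for $t_0 \leq -3$ we have $\sqrt{1-t_0} \geq 2$, whence $\sqrt{1-t_0} - 1 \geq \tfrac{1}{2}\sqrt{-t_0}$ and $\rho(t_0) \geq c_K\sqrt{-t_0}$ after renaming the constant. The only subtlety is this envelope step, since the closest point $y_t$ need not be unique or vary continuously in $t$; working with a single Lagrangian trajectory through one choice of $y_t$ circumvents the issue entirely, as the Dini derivative bound holds at every $t$ independently of such a choice.
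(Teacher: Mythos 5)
Your proof is correct and follows essentially the same route as the paper: define the distance from the origin to $M_t$, bound its time derivative below using the lower speed estimate of Lemma~\ref{lem:lower-speed_MCF} (after first confirming the closest point lies in $B(0,K\sqrt{-t})$ via the Type~I bound), and integrate. The paper phrases the differentiation step via ``locally Lipschitz, hence differentiable a.e.'' rather than a Dini derivative, but the content is identical.
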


\begin{proof}
Recall we are assuming $M_0$ contains the origin. The function $f(t):=\dist(M_t, 0)$ is locally Lipschitz continuous, so its derivative exists at almost every time. At any time of differentiability, if $x \in M_t$ attains $\dist(M_t, 0)$ then
\[f'(t) \leq -H(x,t),\]
and $x$ is in $B(0,K\sqrt{-t})$ by Lemma \ref{lem:type-I_envelope}, so Lemma \ref{lem:lower-speed_MCF} implies $\sqrt{1 -t} \, H(x,t) \geq c_K $. We thus have
\[f'(t) \leq -\frac{c_K}{\sqrt{1 -t}},\]
which integrates to $f(t) \geq 2c_K(\sqrt{1-t}-1)$. From this the claim follows directly. 
\end{proof}

To prove the next result we extract a blow-down limit of $M_t$ which is smooth and convex. It should be emphasised that this is not possible in general - there are convex ancient solutions of mean curvature flow which blow down to a multiplicity-two hyperplane \cite{Wang}, \cite{BLT_pancake} and \cite{BLT_atomic}. In our setting this behaviour is ruled out by Lemma \ref{lem:lower-speed_MCF}. 

\begin{lemma}
\label{lem:split}
$\mathcal T_\infty \Omega_0$ is an affine subspace of $\mathbb{R}^{n+1}$.
\end{lemma}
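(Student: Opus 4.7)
The plan is to extract a smooth blow-down limit of the flow, apply Xu-Jia Wang's splitting theorem to recognize it as a generalized shrinking cylinder, and match its axis with $\mathcal T_\infty \Omega_0$ using the Type I displacement envelope. The hard part will be extracting the blow-down smoothly and nondegenerately---the lower displacement from Lemma~\ref{lem:displacement} is tailored precisely to rule out collapse to a multiplicity-two hyperplane, and the sharpness of the Type I envelope in Lemma~\ref{lem:type-I_envelope} is what lets the Euclidean axis produced by Wang's theorem be identified with $\mathcal T_\infty \Omega_0$ itself.

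First I would fix a sequence $a_j \to 0^+$ and consider the parabolic rescalings $\tilde\Omega^j_t := a_j \Omega_{a_j^{-2}t}$. The Type I hypothesis is scale invariant, so $|A^j| \leq H^j \leq K/(2\sqrt{-t})$ persists in the rescaled flows. Combining Lemma~\ref{lem:type-I_envelope}, Lemma~\ref{lem:displacement}, and the monotonicity $\Omega_t \supset \Omega_0$ for $t \leq 0$ yields the sandwich
\[a_j\Omega_0 \cup B(0, c_K\sqrt{-t}) \;\subset\; \tilde\Omega^j_t \;\subset\; D(a_j\Omega_0, K\sqrt{-t})\]
for every sufficiently large $j$. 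The Ecker-Huisken interior estimates upgrade the curvature bound to uniform higher-order bounds on compact subsets of $\{t<0\}$, so a subsequence converges smoothly to a convex ancient MCF $\tilde\Omega_t$ on $(-\infty, 0)$, and passing the sandwich to the limit gives
\[\mathcal T_\infty \Omega_0 \cup B(0, c_K\sqrt{-t}) \;\subset\; \tilde\Omega_t \;\subset\; D(\mathcal T_\infty \Omega_0, K\sqrt{-t}).\]

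Next, the lower bound forces $\bigcup_{t<0}\tilde\Omega_t = \mathbb{R}^{n+1}$, so Wang's splitting theorem produces a line in the limit. Iterating the splitting (a slab solution could not continue to sweep out its ambient subspace, so each intermediate cross-section again satisfies the hypothesis until the compact case is reached), one obtains a decomposition $\tilde\Omega_t = V \times \Sigma_t$, where $V \subset \mathbb{R}^{n+1}$ is a linear subspace of positive dimension and $\Sigma_t \subset V^\perp$ is a compact convex ancient MCF carrying no further Euclidean factor. Type I is inherited by $\Sigma_t$ (the mean curvatures of $\tilde\Omega_t$ and $\Sigma_t$ coincide), and the compact case of Theorem~\ref{thm:main_MCF}---due to Huisken-Sinestrari~\cite{Huisk-Sin15}, and to Daskalopoulos-Hamilton-Sesum~\cite{Dask-Ham-Ses} when $\dim V^\perp = 2$---identifies $\Sigma_t$ as a family of homothetically shrinking spheres. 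In particular $\Sigma_t \to \{0\}$ in the Hausdorff sense as $t \to 0^-$.

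Finally, sending $t \to 0^-$ in the sandwich above yields on one hand $\tilde\Omega_t \to \mathcal T_\infty \Omega_0$, while on the other hand $\tilde\Omega_t = V \times \Sigma_t \to V$. Therefore $\mathcal T_\infty \Omega_0 = V$, a linear subspace of $\mathbb{R}^{n+1}$, which is in particular an affine subspace.
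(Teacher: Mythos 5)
Your overall strategy matches the paper's: blow down along a sequence $a_j \to 0$, use Lemma~\ref{lem:type-I_envelope} and Lemma~\ref{lem:displacement} to sandwich the rescaled domains between $a_j\Omega_0$ and $D(a_j\Omega_0, K\sqrt{-t})$, extract a smooth limit via Ecker--Huisken, and bring in Xu-Jia Wang's result. The sandwich passing to the limit and the identification $\bigcap_{t<0}\tilde\Omega_t = \mathcal T_\infty\Omega_0$ are exactly the content of the paper's proof. However, you then take a detour through the dimension-reduction and compact classification machinery that is not needed for this lemma and that introduces gaps.

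The paper cites Wang's Lemma 2.9 directly: for an entire convex ancient solution with $\bigcap_{t<0}\tilde\Omega_t$ nonempty, that intersection is an affine subspace. Since your sandwich already shows $\bigcap_{t<0}\tilde\Omega_t = \mathcal T_\infty\Omega_0$, the proof is immediate. Instead, you invoke a ``splitting'' version of Wang's result, iterate it, assert the terminal cross-section $\Sigma_t$ is compact, and identify it as a shrinking sphere via \cite{Huisk-Sin15} and \cite{Dask-Ham-Ses}. This is material that belongs to the subsequent proof of Theorem~\ref{thm:main_MCF}, not to Lemma~\ref{lem:split}, and it is also incomplete: the assertion ``each intermediate cross-section again satisfies the hypothesis until the compact case is reached'' does not justify compactness. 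Wang's result (applied to a cross-section with $\bigcap_{t<0}\Sigma_t = \{0\}$) only says the intersection is a point; it does not by itself say $\Sigma_t$ is compact. In the paper, compactness of the cross-section in the proof of Theorem~\ref{thm:main_MCF} is deduced from $\mathcal T_\infty\Omega_0^\perp = \{0\}$, not from $\bigcap_t\Omega_t^\perp = \{0\}$, and these are different conditions.

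There is also a subtler issue with entirety. You define entire as $\bigcup_{t<0}\tilde\Omega_t = \mathbb{R}^{n+1}$, which follows from the ball lower bound alone. But the paper's definition requires $\mathbb{R}^{n+1} = \cl(\bigcup_{t<0}M_t')$, equivalently that $\bigcap_{t<0}\tilde\Omega_t$ has empty interior, and this is what Wang's result needs. The paper secures this by invoking Proposition~\ref{prop:asymptotic_cone}, which shows $\mathcal T_\infty\Omega_0$ (and hence $\bigcap_{t<0}\tilde\Omega_t$) lies in a hyperplane. You never cite Proposition~\ref{prop:asymptotic_cone}, so this step is missing. The fix is easy (the right-hand side of your sandwich gives $\bigcap_{t<0}\tilde\Omega_t \subset \mathcal T_\infty\Omega_0$, which combined with Proposition~\ref{prop:asymptotic_cone} gives the thinness), but it should be stated. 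Once these two points are addressed, the shrinking-sphere classification and the Hausdorff limit $\Sigma_t \to \{0\}$ can be dropped entirely, since $\tilde\Omega_t \to \bigcap_{t<0}\tilde\Omega_t$ in Hausdorff is automatic from monotonicity.
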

\begin{proof}
Fix a positive sequence $a_j \to 0$ and define $\{M_t^j := a_j M_{a_j^{-2} t}\}_{t\in(-\infty,0]}$. Given $r>0$, \eqref{eq:Type I MCF} implies the second fundamental form of $M_t^j$ satisfies $|A^j| \leq K r^{-1} $ for all $t \leq -r^2$. In addition, Lemma \ref{lem:type-I_envelope} implies
\begin{equation}
\label{eq:blow-down_seq_containment}
M_t^j \subset D(a_j \Omega_0, K\sqrt{-t})
\end{equation}
for all $t\leq 0$. Finally, for large $j$ Lemma \ref{lem:displacement} implies $B(0, c_K \sqrt{-t})\subset\Omega_t^j$ for all $t \leq -r^2$. Appealing to the Ecker-Huisken interior estimates in \cite{Eck-Huisk} and the Arz\`{e}la-Ascoli theorem we find $\{M_t^j\}_{t \in (-\infty,0]}$ subconverges in $C^\infty_{\loc}(\mathbb{R}^{n+1} \times (-\infty, - r^2])$. Since $\Omega_t^j$ is convex this can be proven, for example, by expressing $M_t^j$ locally as a radial graph over $\partial B(0,c_K r)$ for $t \leq -r^2$. 

Since $r>0$ was arbitrary, by a diagonal argument there is a further subsequence of $\{M_t^j\}_{t\in(-\infty,0]}$ that converges in $C^\infty_{\loc}(\mathbb{R}^{n+1} \times (-\infty, 0))$. Denote the limit $\{M_t'=\partial \Omega_t'\}_{t\in(-\infty,0)}$, and define 
\[M_0' := \cap_{t<0} \, \Omega_t'.\]
Observe $M_0'$ is convex, since it is an intersection of convex sets. Moreover, since
\[\mathcal T_\infty \Omega_0 \subset a_j \Omega_{a_j^{-2} t}\]
for all $t <0$ we have $\mathcal T_\infty\Omega_0 \subset \Omega_t'$ for all $t <0$, hence $\mathcal T_\infty \Omega_0 \subset M_0'$. Using the Type I condition we also get the reverse inclusion: \eqref{eq:blow-down_seq_containment} implies 
\[\Omega_t' \subset D(\mathcal T_\infty \Omega_0, K\sqrt{-t})\]
for all $t<0$, hence $M_0 ' \subset \mathcal T_\infty \Omega_0$. Thus $\mathcal T_\infty \Omega_0$ and $M_0'$ coincide. 

Using Proposition \ref{prop:asymptotic_cone} we conclude $M_0'$ lies in a hyperplane. On the other hand,
\[B(0, c_K \sqrt{-t})\subset \Omega_t'\]
for all $t < 0$ by Lemma \ref{lem:displacement}. Combining these two observations we see that the hypersurfaces $M_t'$ sweep out all of space, in the following sense:
\[\mathbb{R}^{n+1} = \cl( \cup_{t < 0} \, M_t').\]
Solutions with this property are called entire, since their arrival time is an entire function. 

Since $\{M_t'\}_{t\in (-\infty,0)}$ is entire and convex, and $M_0'$ is non-empty, a result of Xu-Jia Wang \cite{Wang}[Lemma 2.9] asserts that $M_0'$ is an affine subspace of $\mathbb{R}^{n+1}$, possibly of dimension zero. This proves the claim, as we showed above $\mathcal T_\infty \Omega_0 = M_0'$.
\end{proof}

We finally conclude $\{M_t\}_{t\in(-\infty,0]}$ is spherical or cylindrical by a dimension-reduction argument. 

\begin{proof}[Proof of Theorem \ref{thm:main_MCF}]
By Lemma \ref{lem:split} we know $\mathcal T_\infty \Omega_0$ coincides with an affine subspace $\Sigma \subset \mathbb{R}^{n+1}$. Let $m$ denote the dimension of $\Sigma$. If $m=0$ we conclude $\Omega_0$ is compact. This in turn implies $M_t$ is compact, but every compact convex Type I ancient solution of mean curvature flow in $\mathbb{R}^{n+1}$, $n\geq 2$, is a family of homothetically shrinking spheres by work of Huisken-Sinestrari \cite{Huisk-Sin15}. 

Suppose then $m \geq 1$. In this case, since $\Sigma \subset \Omega_t$ for all $t <0$ and $\Omega_t$ is convex we have the splitting  
\[M_t = \Sigma \times M^\perp_t, \qquad M^\perp_t := M_t \cap \Sigma^\perp.\] 
It follows that $M_t^\perp$ is a Type I convex ancient solution of mean curvature flow in $\Sigma^\perp \cong \mathbb{R}^{n-m+1}$. Let $\Omega_t^\perp$ denote $\Omega_t \cap \Sigma^\perp$, so that $M_t^\perp = \partial \Omega_t^\perp$. Since 
\[\mathcal T_\infty \Omega_0^\perp = \mathcal T_\infty \Omega_0 \cap \Sigma^\perp = \Sigma \cap \Sigma^\perp = \{0\},\]
$M_t^\perp$ is compact. 

If $n-m=1$ we now appeal to the classification of compact convex ancient solutions to curve-shortening flow by Daskalopoulos-Hamilton-Sesum \cite{Dask-Ham-Ses} to conclude $\{M_t^\perp\}_{t\in(-\infty,0]}$ is a family of shrinking circles in $\mathbb{R}^2$. If instead $n-m \geq 2$, we again appeal to \cite{Huisk-Sin15} to conclude $\{M^\perp_t\}_{t \in (-\infty,0]}$ is a family of shrinking spheres. In either case $\{M_t\}_{t\in(-\infty,0]}$ is a family of shrinking cylinders. 
\end{proof}

\section{Fully nonlinear flows}
\label{sec:FNL}

In this section we establish Theorem \ref{thm:main_FNL}. The proof follows similar lines to Theorem \ref{thm:main_MCF}, but various new difficulties arise. In particular, Xu-Jia Wang's Lemma 2.9 in \cite{Wang} does not seem to generalise easily to flows by speeds other than the mean curvature. Therefore, we argue differently. We first prove Theorem \ref{thm:main_FNL} under the additional assumption that the solution is entire. This argument is partially inspired by the proof of Lemma 2.9 in \cite{Wang}, but is more difficult, notably requiring the Krylov-Safonov estimate, a characterisation of shrinking cylinder solutions proven using translating barriers, and uniqueness results for shrinking spheres from \cite{Lang-Lyn20}. We then proceed with the proof of Theorem \ref{thm:main_FNL} in general, using the entire case as an ingredient in a dimension-reduction argument.

\subsection{The entire case}\label{subsec:FNL_entire} A convex ancient $\gamma$-flow $\{M_t=\partial \Omega_t\}_{t\in(-\infty,0)}$ is entire if it sweeps out all of space, in the sense that 
\[\mathbb{R}^{n+1} = \cl(\cup_{t < 0} \, M_t).\]
Recall we say $\gamma$ admits a splitting theorem if every uniformly parabolic convex ancient $\gamma$-flow is either strictly convex or splits as a product $M_t = \mathbb{R}^m \times M_t^\perp$, where $M_t^\perp$ is strictly convex in $\mathbb{R}^{n-m+1}$. As a step towards proving Theorem \ref{thm:main_FNL} we establish:

\begin{proposition}
\label{thm:Wang_FNL}
Let $\gamma \in C^\infty(\Gamma)$ be an admissible speed which is either convex, or concave and inverse-concave, and admits a splitting theorem. Let $\{M_t = \partial \Omega_t\}_{t\in(-\infty,0)}$ be a uniformly parabolic convex entire $\gamma$-flow such that $|A| \leq K/\sqrt{-t}$ for all $t <0$. Then $\{M_t\}_{t\in(-\infty,0)}$ is a family of homothetically shrinking spheres or cylinders. 
\end{proposition}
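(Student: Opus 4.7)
The plan is to follow the strategy of \cite{Wang}[Lemma 2.9] for mean curvature flow, adapted to the fully nonlinear setting: show that the final set $K := \bigcap_{t<0}\Omega_t$ is an affine subspace, split the flow along $K$, and conclude the perpendicular (compact) factor is a shrinking sphere.

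First I would set $K := \bigcap_{t<0}\Omega_t$, which is convex and contained in $\Omega_t$ for every $t<0$. The entireness hypothesis guarantees $\Omega_t\setminus K\to\emptyset$ as $t\nearrow 0$. The key structural claim is that $K$ is an affine subspace of $\mathbb{R}^{n+1}$. To analyze this I would take a blow-down sequence $M_t^j := a_j M_{a_j^{-2}t}$ with $a_j\to 0$. The Type I bound and uniform parabolicity are scale invariant, Corollary \ref{cor:conv_curv_bd} gives uniform control of $G$ on parabolic cylinders, and the Krylov-Safonov estimate (which applies because $\gamma$ is convex or concave) combined with Schauder bootstrapping yields $C^\infty_{\loc}$ subconvergence to a smooth convex entire Type I $\gamma$-flow $M_t^\infty$ encoding the large-scale geometry around $K$.

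Next I would deploy translating $\gamma$-soliton barriers. The idea is that if $K$ failed to be affine -- say it had a boundary point at which a supporting hyperplane did not lie in $K$ -- one could place a $\gamma$-translator moving transverse to this hyperplane and use it as an inside barrier against $M_t$. The comparison principle would then force $M_t$ to cross into the complement of $K$, contradicting the definition of $K$. This is essentially a characterization of shrinking cylinder solutions via barriers, and it forces $K$ to be an affine subspace of some dimension $m$. Granted this, the inclusion $K \subset \Omega_t$ together with convexity of $\Omega_t$ gives the Euclidean splitting $M_t = K \times M_t^\perp$, where $M_t^\perp \subset K^\perp \cong \mathbb{R}^{n-m+1}$ has final set consisting of a single point and is therefore compact.

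The restricted flow $M_t^\perp$ is a compact convex uniformly parabolic Type I $\gamma^\perp$-flow, and the uniqueness of compact convex shrinking spheres from \cite{Lang-Lyn20} forces $M_t^\perp$ to be a family of homothetically shrinking spheres. Hence $M_t$ is a shrinking sphere when $m=0$ and a shrinking cylinder when $m\ge 1$. The main obstacle is the structural claim that $K$ is an affine subspace. Wang's argument in the MCF setting uses tools such as the monotonicity formula that have no clean fully nonlinear analog, so the translating-soliton barrier construction together with the Krylov-Safonov regularity is the technical heart of the proof and the "more difficult" ingredient flagged in the text.
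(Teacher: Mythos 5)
Your high-level plan -- show $K := \bigcap_{t<0}\Omega_t$ is an affine subspace, split along it, and conclude the compact factor is a shrinking sphere -- does match the paper's overall arc, and you correctly identify the cylinder-characterisation-by-barriers as the technical heart. But the proposal has several gaps where the paper does something genuinely different.

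First, you describe the barriers as ``translating $\gamma$-solitons.'' The paper deliberately avoids translators: for a general fully nonlinear $\gamma$, entire translating solitons are not known to exist, and certainly not with explicit parametrisations one could place as barriers. Instead, Lemma~\ref{lem:bowl} constructs an explicit paraboloid $\partial\mathcal B^m_t$ and verifies only the one-sided inequality that it translates with normal speed \emph{not exceeding} $\gamma(\lambda)$. That makes $\{\mathcal B^m_t\}$ an outer barrier by avoidance without any existence theory for solitons. Your ``place a $\gamma$-translator transverse to the supporting hyperplane'' step would need to be replaced by such an explicit construction, and it is not clear how to make your comparison yield a contradiction: the argument ``supporting hyperplane not contained in $K$, so a translator crosses into $K$'' doesn't obviously produce an inconsistency, since $K$ is defined as the intersection and points entering the barrier don't leave $K$.

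Second, the paper does not prove directly that $K$ is affine; it runs a downward induction on the dimension $m$ of the largest affine subspace in $M_0 := K$, with base case $m=\bar m(\Gamma)$ (Lemma~\ref{lem:Wang_FNL_min}, which uses that $\Cyl_{m+1}\not\subset\Gamma$ together with uniform parabolicity to force uniform convexity and hence compactness via Hamilton). The inductive step handles the situation where $M_0\cap\Sigma^\perp$ contains a ray but not a line by translating the whole solution along the ray, extracting a smooth limit, applying the inductive hypothesis to see the limit is a shrinking cylinder, and then invoking Proposition~\ref{prop:cyl_char_FNL} to force the original solution to \emph{coincide} with the cylinder -- contradicting the assumption that the ray is not a full line. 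That coincidence step (an inner solution trapped in a cylinder with nonempty final set must equal the cylinder) is where the barrier of Lemma~\ref{lem:bowl} is actually used, and it is quite different from your direct ``corner-point'' comparison.

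Third, your compactness argument is incomplete. Corollary~\ref{cor:conv_curv_bd} (and the Type~I hypothesis) give an upper bound for $G$, which controls $|A|$; Krylov--Safonov then gives H\"older bounds for $D^2 u$. But to bootstrap with Schauder in the fully nonlinear setting you also need a uniform \emph{lower} bound for $G$ on compact sets, since the higher derivatives of $\gamma$ in $A$ degenerate as $A\to 0$. The paper obtains this from Lemma~\ref{lem:Harnack}/\ref{lem:tech}: if $G^j$ degenerated somewhere, the shifted solutions would subconverge to a stationary slab or halfspace, contradicting entireness. Without this step your $C^\infty_{\loc}$ limit is not justified.

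So: right spirit, correct identification of the key lemma, but the barrier objects are wrong, the inductive contradiction structure is missing, and the lower speed bound needed for compactness is omitted.
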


We begin with two technical lemmas which are useful for proving lower speed bounds. For each $r>0$ we define a parabolic cylinder
\[Q_r:=\{ (x,t) \in \mathbb{R}^n \times (-\infty, 0] : |x|<r, \; -r^2 < t \leq 0\}.\]
We denote by $P^2(Q_r)$ the space of functions defined in $Q_r$ for which
\[\|u\|_{P^{2}(Q_r)} :={} \sup_{Q_r}  \;|u| + |u_t| + |Du| + |D^2 u| \]
is finite. In addition, we write $P^{2,\alpha}(Q_r)$ for the parabolic H\"{o}lder space with norm
\begin{align*}
\|u\|_{P^{2,\alpha}(Q_r)} :={} \|u\|_{P^{2}(Q_r)} &+ \sup_{(x,t)\not=(y,s) \in Q_r}  \frac{|u_t(x,t) - u_t(y,s)|+ |D^2 u(x,t) - D^2u(y,s)| }{|x - y|^\alpha + |t - s|^\frac{\alpha}{2}}\\
&+\sup_{(x,t)\not=(x,s) \in Q_r} \frac{|D u(x,t) - D u(x,s)|}{|t - s|^\frac{1+\alpha}{2}}.
\end{align*}

Recall a $\gamma$-flow is uniformly parabolic if there is a positive lower bound for $\dist(\lambda/G, \partial \Gamma)$ valid over the whole solution, where $\gamma \in C^\infty(\Gamma)$. Equivalently, the principal curvatures of $M_t$ lie in a closed symmetric convex cone $\Gamma_0$ satisfying 
\[\Gamma_0 \cap \{z \in \Gamma : \gamma(z) =1\} \subset \Gamma.\]
When this inclusion holds we write $\Gamma_0 \Subset \Gamma$. 

\begin{lemma}
\label{lem:Harnack}
Fix a concave or convex admissible speed $\gamma \in C^\infty(\Gamma)$. Let $\{M_t^j\}_{t \in (-r^2, 0]}$ be a sequence of uniformly parabolic $\gamma$-flows with principal curvatures in a fixed cone $\Gamma_0 \Subset \Gamma$, and suppose there is a corresponding sequence of smooth functions $u^j : Q_r \to \mathbb{R}$ such that $M_t^j = \graph u^j(\cdot, t)$ and $\|u^j\|_{P^2(Q_r)} \leq K$. Finally, suppose $G^j(x_j, 0) \to 0$ for some sequence $x_j \in M_0^j$ such that $x_j \to 0$ and $\nu^j(x_j,0) \to e_{n+1}$. For every $\theta<1$, up to a subsequence, $u^j \to 0$ in $P^2(Q_{\theta r})$. 
\end{lemma}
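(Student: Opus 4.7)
The plan is to argue by compactness: pass to a $P^2$-convergent subsequence of the $u^j$, show via the strong maximum principle that the limit speed $G^\infty$ vanishes identically on a slightly smaller cylinder, and deduce $u^\infty\equiv 0$ from the hypotheses at $(0,0)$.

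First I would observe that each $u^j$ satisfies the graphical $\gamma$-flow equation
\[
\partial_t u = -\sqrt{1+|Du|^2}\,\gamma\bigl(\lambda[D^2u,Du]\bigr),
\]
whose right-hand side is concave or convex in the Hessian variable (the shape operator of $\graph u$ is an affine function of $D^2u$ with $Du$ fixed, and a symmetric function of the eigenvalues of an affine function of a symmetric matrix inherits the concavity or convexity of $\gamma$). The uniform $P^2$-bound together with $\Gamma_0\Subset\Gamma$ makes this equation uniformly parabolic with all structural quantities controlled by $K$, $\gamma$ and $\Gamma_0$. The Evans-Krylov theorem therefore yields interior $P^{2,\alpha}$ estimates on every $Q_{\theta r}$ with $\theta<1$, and Schauder bootstrapping upgrades these to smooth estimates of arbitrary order on any slightly smaller cylinder. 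Arzel\`a-Ascoli then extracts a subsequence converging in $P^2(Q_{\theta r})$ to a smooth classical solution $u^\infty$ of the same equation.

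Next I analyse the limit speed $G^\infty:=\sqrt{1+|Du^\infty|^2}\,\gamma(\lambda[u^\infty])$. Pointwise $G^\infty\geq 0$ as a limit of the positive $G^j$, and the hypothesis $G^j(x_j,0)\to 0$, together with $(x_j,0)\to(0,0)$ and uniform $C^2$-convergence, gives $G^\infty(0,0)=0$. On the evolving hypersurfaces $M_t^\infty:=\graph u^\infty(\cdot,t)$ the speed satisfies the standard parabolic evolution equation
\[
(\partial_t - \dot\gamma^{ij}\nabla_i\nabla_j)G^\infty = \dot\gamma^{ij}A^2_{ij}\,G^\infty,
\]
which pulls back to a linear second-order parabolic equation on $Q_{\theta' r}$ whose leading coefficients are uniformly positive definite ($\Gamma_0\Subset\Gamma$ keeps $\dot\gamma^{ij}$ uniformly elliptic) and whose zeroth-order coefficient is bounded (the $C^2$-bound on $u^\infty$ controls $|A^\infty|$). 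The parabolic strong maximum principle applied to the nonnegative supersolution $G^\infty$ at the future-boundary vanishing point $(0,0)$ then forces $G^\infty\equiv 0$ on the parabolic past, namely on $Q_{\theta' r}$ for any $\theta'<\theta$.

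To finish, $G^\infty\equiv 0$ forces $\lambda[u^\infty]\equiv 0$: the admissibility condition $\Gamma_0\cap\{\gamma=1\}\subset\Gamma$ forces $\Gamma_0\setminus\{0\}\subset\Gamma$ by homogeneity, so any nonzero $\lambda\in\Gamma_0$ lies in $\Gamma$ and has $\gamma(\lambda)>0$. Hence $A^\infty\equiv 0$ and each slice $M_t^\infty$ is a piece of an affine hyperplane. The conditions $x_j\to 0$ and $\nu^j(x_j,0)\to e_{n+1}$ translate to $u^\infty(0,0)=0$ and $Du^\infty(0,0)=0$, fixing $u^\infty(\cdot,0)\equiv 0$, while the resulting identity $\partial_tu^\infty = -\sqrt{1+|Du^\infty|^2}\,G^\infty=0$ extends this to $u^\infty\equiv 0$ on $Q_{\theta' r}$. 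Since every subsequential limit must equal $0$ and $\theta'<\theta<1$ were arbitrary, a diagonal argument shows $u^j\to 0$ in $P^2(Q_{\theta r})$ for each $\theta<1$. The main obstacle I anticipate is setting up the strong maximum principle cleanly for $G^\infty$: both the uniform ellipticity of $\dot\gamma^{ij}$ and the boundedness of the zeroth-order term $\dot\gamma^{ij}A^2_{ij}$ are essential, and both rely on the interplay between $\Gamma_0\Subset\Gamma$ and the $P^2$-bound; a secondary technical item is verifying the concavity or convexity hypothesis of Evans-Krylov for the full graphical nonlinearity, which is standard but requires care with the dependence on $Du$.
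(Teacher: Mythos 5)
Your overall strategy (compactness to extract a smooth limit, then a maximum-principle argument) differs from the paper's at the crucial step, and this is precisely where a gap appears. The paper applies the Krylov--Safonov Harnack inequality to the sequence $v^j := -\partial_t u^j$, \emph{before} passing to any limit; differentiating the graphical equation $\partial_t u = -\sqrt{1+|Du|^2}\,\gamma(\hat W)$ in $t$ gives a linear, uniformly parabolic equation for $v^j$ with bounded measurable coefficients $a^{ij} = P^i_k\dot\gamma^{kl}(\hat W^j)P^j_l$, and these are well-defined and uniformly elliptic because each $M_t^j$ is a genuine $\gamma$-flow with $G^j>0$ everywhere and $\lambda^j/G^j$ bounded away from $\partial\Gamma$. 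Harnack then gives $\sup_{B_{\theta r}} v^j(\cdot,\tau) \leq C\inf_{B_{\theta r}} v^j(\cdot,0) \to 0$ for each $\tau<0$, after which Krylov's estimate and Arzel\`a--Ascoli finish the proof.

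Your approach instead passes to the smooth limit $u^\infty$ first and then tries to apply the strong maximum principle to $G^\infty$ via the evolution equation $(\partial_t - \dot\gamma^{ij}\nabla_i\nabla_j)G^\infty = \dot\gamma^{ij}A^2_{ij}G^\infty$. The gap is that this equation is not valid, and its coefficients are not even defined, at points where $A^\infty = 0$. For a general admissible speed $\gamma$ the cone $\Gamma$ is a proper open cone with $0 \in \partial\Gamma$, so $\gamma$ is not differentiable at $\lambda = 0$; moreover $\dot\gamma$ is homogeneous of degree $0$ and is not constant unless $\gamma$ is linear (i.e.\ the mean curvature), so $\dot\gamma$ has no continuous extension to $\lambda=0$ and the assertion that ``$\Gamma_0\Subset\Gamma$ keeps $\dot\gamma^{ij}$ uniformly elliptic'' is only true away from $A^\infty=0$. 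Since $G^\infty = 0$ if and only if $A^\infty = 0$, the set where the equation degenerates is exactly the set you are trying to prove is everything, so the strong maximum principle argument is circular: you would need the equation to hold across $\partial\{G^\infty > 0\}$ in order to propagate the vanishing from $(0,0)$ into $\{G^\infty>0\}$, but that is precisely where it breaks down. (For mean curvature flow $\dot\gamma^{ij}=g^{ij}$ is smooth and uniformly elliptic everywhere, so your argument does work there; but the lemma must cover nonlinear speeds.) The paper's Harnack-on-the-sequence approach sidesteps the degeneracy entirely because each $u^j$ has strictly positive speed, and that is the modification you need.
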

\begin{proof}
To ease notation, fix $j$ and write $u$ for $u^j$. With respect to the parameterisation induced by $u$ the normal to $M_t^j$ and its Weingarten map are given by
\[\nu = \frac{1}{\sqrt{1+|Du|^2}} (-Du,1), \qquad   W^i_j = -g^{ik} \frac{D_k D_j u}{\sqrt{1+|Du|^2}},\]
where
\[g^{ik}:=\delta^{ik} - \frac{D_i u D_k u}{1+|Du|^2} .\]
As in \cite{Urbas} we define 
\[P^i_j := \delta^i_j - \frac{D_iu D_ju}{\sqrt{1+|Du|^2}(1+\sqrt{1+|Du|^2})}, \qquad \hat W_{ij} := -P^k_i \frac{D_k D_l u}{\sqrt{1+|Du|^2}} P^l_j,\]
so that $\hat W$ is symmetric but has the same eigenvalues as $W$. Since the hypersurfaces $M_t^j$ form a $\gamma$-flow and 
\[G(x + u(x,t)e_{n+1},t) = \gamma(\lambda(x,t)) = \gamma(\hat W(x,t))\]
we have
\[\frac{u_t}{\sqrt{1+|Du|^2}} = - \gamma(\hat W).\]
Differentiating this equation we find $v:=-u_t$ solves an equation of the form 
\[v_t = a^{ij} D_i D_j v + b^i D_i v,\]
where $a^{ij} := P^i_k \dot \gamma^{kl}(\hat W) P^j_l$, and $b^i$ is some expression involving $\dot \gamma$ and first and second spatial derivatives of $u$ such that $|b^j| \leq C(K)$. We also have 
\[C^{-1} |\xi|^2 \leq a^{ij} \xi_i \xi_j \leq C |\xi|^2\]
for some $C = C(n,\gamma, \Gamma_0, K)$, so by the Krylov-Safonov estimate \cite{Kryl-Saf}, for every $\theta <1$ and $\tau < 0$, 
\[\sup_{B(0,\theta r)} v(\cdot, \tau) \leq C \inf_{B(0,\theta r)} v(\cdot, 0),\]
where $C = C(n,\gamma, \Gamma_0, K, \theta, \tau)$. Observing that $v = \sqrt{1+|Du|^2}\, \gamma(\hat W)$, we obtain 
\[\sup_{B(0,\theta r)} \gamma(\hat W(\cdot, \tau)) \leq C \inf_{B(0,\theta r)}\gamma(\hat W(\cdot, 0)),\]
where $C = C(n,\gamma, \Gamma_0, K, \theta, \tau)$.

Since $j$ was arbitrary this inequality holds with the same constant for each $M_t^j$. In particular, since $G^j(x_j,0) \to 0$, for each $\theta <1$ we conclude
\begin{equation}
\label{eq:Hess to zero}
\sup_{B(0, \theta r)} |D^2u^j|(\cdot, t) \to 0
\end{equation}
as $j \to \infty$ for every $t \in (-\theta^2 r^2, 0)$. Consider now a fixed $\theta<1$. Since $\gamma$ is concave or convex, Krylov's estimate \cite{Kryl82} implies $\|u^j\|_{P^{2,\alpha}(Q_{\theta r})}$ can be bounded purely in terms of $n$, $K$ and $\theta$ for some $\alpha \in (0,1)$ depending only on $n$ and $K$. Passing to a subsequence, we may therefore assume $u^j$ converges in $P^2(Q_{\theta r})$. As a consequence of \eqref{eq:Hess to zero} the limit is linear in space and constant in time, hence zero since $x_j \to 0$ and $\nu^j(x_j, 0) \to 0$. 
\end{proof}

\begin{lemma}
\label{lem:tech}
Fix a concave or convex admissible speed $\gamma \in C^\infty(\Gamma)$. Let $\{M_t^j\}_{t\in(-\infty,0]}$ be a sequence of convex $\gamma$-flows with principal curvatures in $\Gamma_0 \Subset \Gamma$. Suppose $|A^j|\leq K$ for some uniform $K$ and that $G^j(x_j,0) \to 0$ along a bounded sequence $x_j$. Then there is a hyperplane $\Sigma \subset \mathbb{R}^{n+1}$ and a closed interval $I$ such that, for each $t \leq 0$, the sequence $\Omega_t^j$ subconverges to $\Sigma \times I$ in the Hausdorff topology.
\end{lemma}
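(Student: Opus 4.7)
The plan is to mimic the argument of Lemma \ref{lem:lower-speed_MCF}, using the Krylov-Safonov based Harnack inequality of Lemma \ref{lem:Harnack} in place of the Ecker-Huisken estimate and the strong maximum principle. The argument has four stages: local flattening of one face of the limiting slab, spatial iteration, temporal iteration, and finally analysis of the opposite face.

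First I would translate the ambient space and pass to subsequences so that $x_j \to 0$ and $\nu^j(x_j,0) \to e_{n+1}$. The bound $|A^j|\leq K$ together with $\lambda^j \in \Gamma_0 \Subset \Gamma$ produces, for some $\rho = \rho(n,\gamma,\Gamma_0,K) > 0$, a graphical representation $u^j$ of a neighborhood of $x_j$ in $M_t^j$ over $\Sigma := \{x_{n+1}=0\}$ with $\|u^j\|_{P^2(Q_\rho)} \leq C(K)$. Lemma \ref{lem:Harnack} then gives, after passing to a further subsequence, $u^j \to 0$ in $P^2(Q_{\theta \rho})$ for every $\theta < 1$. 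I would then bootstrap: at any $x' \in B(0,\theta \rho)\cap \Sigma$ the $P^2$-convergence supplies new base points $x_j' \in M_0^j$ with $x_j' \to x'$, $\nu^j(x_j',0) \to e_{n+1}$ and $G^j(x_j',0) \to 0$, so Lemma \ref{lem:Harnack} applies at $x_j'$ and flattens a larger neighborhood; re-centering Lemma \ref{lem:Harnack} at any earlier time where flatness has already been established analogously propagates the conclusion backward in time. A diagonal argument then produces a single subsequence along which $\Omega_t^j$ converges in Hausdorff topology for every $t \leq 0$ to a convex set $\Omega_t^\infty \supset \Sigma$; since the top sheet of $M_t^j$ approaches $\Sigma$ itself, convexity forces $\Omega_t^\infty = \Sigma \times I_t$ for some closed interval $I_t$ with $\max I_t = 0$.

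The remaining, and in my view main, difficulty is to show $I_t$ is independent of $t$. Write $I_0 = [-d,0]$ with $d \in [0,\infty]$; the case $d = \infty$ is immediate from the monotonicity $\Omega_t^j \supset \Omega_0^j$. When $d < \infty$, at any interior point $y$ of the bottom face $\Sigma - de_{n+1}$ pick $y_j \in M_0^j$ with $y_j \to y$; Hausdorff proximity of $\Omega_0^j$ to the slab together with convexity of $\Omega_0^j$ forces $\nu^j(y_j,0) \to -e_{n+1}$, so a neighborhood of $y_j$ in $M_0^j$ is the graph of a function $v^j$ over $\Sigma - de_{n+1}$ with $\|v^j\|_{P^2} \leq C(K)$. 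Krylov's estimate upgrades this to a $P^{2,\alpha}$-bound; since $v^j \to -d$ in $C^0$ (by the Hausdorff convergence), any $P^2$-limit of a subsequence of $v^j$ must be this constant, so $D^2 v^j \to 0$ locally and hence $G^j(y_j,0)\to 0$. The spatial-and-temporal bootstrap via Lemma \ref{lem:Harnack} can then be repeated at bottom base points, showing the bottom face of $M_t^j$ converges to $\Sigma - de_{n+1}$ for every $t \leq 0$ in $P^2_{\loc}$. In particular the speed of the bottom-most point of $\Omega_t^j$ tends to zero locally uniformly in $t$, and integrating the evolution of this height in the $e_{n+1}$ direction yields $\inf I_t = \inf I_0 = -d$. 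Therefore $I_t = I := [-d, 0]$ for every $t \leq 0$, as required.
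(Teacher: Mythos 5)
Your proof is correct and follows essentially the same route as the paper: pass to a Hausdorff limit, use Lemma~\ref{lem:Harnack} with spatial and temporal iteration to flatten the sheet containing $x_j$, then repeat at the opposite sheet. A few small comparisons worth noting. First, the paper establishes the Hausdorff limit $\mathcal K_t$ for \emph{all} $t\le 0$ up front, using a Lipschitz-in-time bound on $\Omega_t^j$ (the speed is controlled by $K$ and $\Gamma_0$) together with a diagonal argument over rational times; your ``diagonal argument'' gloss elides this Lipschitz step, which you would need in order to pass from a dense set of times to all $t$. Second, for the bottom face you supply an explicit argument that $G^j(y_j,0)\to 0$ (Krylov's $P^{2,\alpha}$ bound, Arzel\`a--Ascoli, and $C^0$-convergence of $v^j$ to a constant forcing $D^2v^j\to 0$); the paper merely asserts the existence of such a sequence $z_j$ with $G^j(z_j,0)\to 0$, so your version actually fills in a detail. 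Third, to pin down $I_t$ you integrate the normal speed of the bottom-most point, whereas the paper simply notes that once $\Sigma$ and $\Sigma\times\{a\}$ both lie in $\partial\mathcal K_t$, convexity of $\mathcal K_t$ forces $\mathcal K_t=\Sigma\times[0,a]$; the convexity route is a bit cleaner since it avoids having to identify and track a specific bottom-most point, but your integration argument works as well. In the $d=\infty$ case, monotonicity alone only yields $\mathcal K_t\supset\mathcal K_0$; you also need $\Sigma\subset\partial\mathcal K_t$ (which you have already established) to force equality, so state that explicitly.
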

\begin{proof}
Employing a diagonal argument we find there is a subsequence in $j$ such that $\Omega_t^j$ converges to a closed convex limit $\mathcal K_t$ in the Hausdorff topology for every rational $t \leq 0$. The pointwise speed of $M_t^j$ is bounded in terms of $K$ and $\Gamma_0$, so for every compact $X \subset \mathbb{R}^{n+1}$, the distance between $\mathcal K_t \cap X$ and $\mathcal K_{t+h} \cap X$ is $O(h)$. Since the sets $\Omega_t^j$ move inward, using the triangle inequality we conclude $\Omega_t^j$ converges to a closed convex limit $\mathcal K_t$ in the Hausdorff topology for every $t \leq 0$. 

The sequence $x_j$ is bounded so we may pass to a subsequence such that, up to a rigid motion, $x_j \to 0$ and $\nu^j(x_j,0) \to e_{n+1}$. Let $U^j$ denote the largest connected neighbourhood of $x_j$ in $M_0^j$ in which $\nu^j \cdot e_{n+1} > \tfrac{1}{2}$ and denote by $P$ the orthogonal projection from $\mathbb{R}^{n+1}$ to $\{x_{n+1} = 0\}$. We have $|A^j| \leq K$, so up to a further subsequence there is an $r = r(K)$ such that 
\[B_r \subset P(U^j),\]
where $B_r := \{x_{n+1} = 0\} \cap B(0,r)$. By the inverse function theorem, $P^{-1}(B_r) \cap U^j$ is the graph of a smooth function $u^j : B_r \to \mathbb{R}$. For each $\rho < r$ define
\[Z^j_\rho := P^{-1}(B_\rho) \cap \{x_{n+1}\geq u^j(x_1, \dots, x_n)\}.\]
Since $\Omega_t^j$ is convex, $M_t^j \cap Z^j_\rho$ is a graph over $B_\rho$ for all $t \leq 0$. In fact we may choose $\rho$ depending only on $K$ so that $\nu^j(x,t) \cdot e_{n+1} \geq \tfrac{1}{2}$ for all $x \in M_t^j \cap Z_\rho^j$ and $t > -\rho^2$. In particular, $u^j$ can be extended to a smooth function $u^j : Q_\rho \to \mathbb{R}$ such that 
\[M_t^j \cap Z_\rho^j = \graph u^j(\cdot, t)\]
for all $t \in (-\rho^2, 0]$, where $Q_\rho := B_\rho \times (-\rho^2, 0]$. 

The bounds $|A^j|\leq K$ and $\nu^j \cdot e_{n+1} \geq \tfrac{1}{2}$ imply $\|u^j\|_{P^2(Q_\rho)}$ is bounded independently of $j$, so since $G^j(x_j, 0) \to 0$, appealing to Lemma \ref{lem:Harnack} we conclude $u^j$ subconverges to 0 in $P^2(Q_{\theta \rho})$ for all $\theta <1$. Hence $B_{\rho} \subset \partial \mathcal K_t$ for all $t > -\rho^2$, and for any $y \in B_\rho$ there is a sequence $y^j \in M_0^j$ such that
\[y_j \to 0, \qquad G^j(y_j, 0) \to 0, \qquad \nu^j(y_j, 0) \to e_{n+1},\]
so we may apply Lemma \ref{lem:Harnack} again at finitely many points near $\partial B_\rho$ to conclude $B_{3\rho/2} \subset \partial \mathcal K_t$ for all $t > -\rho^2$. Given any $R< \infty$, after repeating this procedure finitely many times we find
\[B_R \subset \partial \mathcal K_t\]
for all $t > -\rho^2$. We may also iterate backwards in time in exactly the same way to conclude, for all $R < \infty$, $B_R \subset \partial \mathcal K_t$ for all $t \geq - R^2$. In other words, $\{x_{n+1} = 0\} \subset \partial \mathcal K_t$ for all $t \leq 0$. 

Define $\Sigma = \{x_{n+1} = 0\}$. Since $\mathcal K_0$ is convex and contains $\Sigma$, $\mathcal K_0 = \Sigma \times I$ where $I$ is a closed interval. If $I = [0,\infty)$, which is to say $\mathcal K_0$ is a halfspace, then since $ \mathcal K_0 \subset \mathcal K_t $ and $\Sigma \subset \partial \mathcal K_t$ we conclude $\mathcal K_t=\Sigma \times I$ for all $t \leq 0$. If $I = [0,a]$ (where we allow $a =0$) then for any $R\gg a$, when $j$ is large, $M_0^j \cap B(0,R)$ consists of two connected, approximately planar components. One contains $x_j$, and in the other we can find a bounded sequence $z_j$ such that $G^j(z_j, 0) \to 0$. Repeating the above argument we find $\Sigma \times \{a\}$ is a subset of $\partial \mathcal K_t$ for all $t \leq 0$. Then since $\mathcal K_t$ is convex, $\mathcal K_t = \Sigma \times [0,a]$. 
\end{proof}

Next we construct a family of translating barriers. For each $0 \leq m \leq n-1$ let us write 
\[\Cyl_m:=\{(\underbrace{0,\dots, 0}_{m}, r, \dots, r) \in \mathbb{R}^n : r >0\}.\]
Given an admissible speed $\gamma \in C^\infty(\Gamma)$ we define $\bar m(\Gamma) := \max\{ 0\leq m\leq n-1 : \Cyl_m \subset \Gamma\}$, and set
\[c_m := \gamma(\underbrace{0,\dots,0}_{m},1,\dots,1)^{-1}\]
for each $0\leq m \leq \bar m(\Gamma)$. That is, $c_m^{-1}$ is the value taken by $G$ on a round unit cylinder $\mathbb{R}^{m}\times S^{n-m}$. For each $0\leq m\leq \bar m(\Gamma)$ and time $t \leq 0$ we define
\[\mathcal C^m_t := \{x \in \mathbb{R}^{n+1} : x_1^2 + \dots x_{n-m+1}^2 \leq -2 c_{m}^{-1} t\},\]
so that the family of cylinders $\{\partial \mathcal C_t^m\}_{t\in(-\infty,0)}$ is a $\gamma$-flow. For $m > \bar m(\Gamma)$ there is no cylindrical $\gamma$-flow modeled on $\mathbb{R}^m \times S^{n-m}$, since $\gamma$ isn't even defined on $\Cyl_m$. For example, the two-harmonic mean curvature flow, for which 
\[G = \Big(\sum_{i<j} \frac{1}{\lambda_i + \lambda_j}\Big)^{-1},\]
admits a smooth shrinking cylinder solution with at most one Euclidean factor. 

\begin{lemma}
\label{lem:bowl}
Fix an admissible speed $\gamma \in C^\infty(\Gamma)$. For each $1 \leq m \leq \bar m(\Gamma)$, define
\[\mathcal B^m_t := \{x \in \mathbb{R}^{n+1}: x_{n+1} \geq \tfrac{c_{m}}{2}(x_1^2 + \dots + x_{n-m+1}^2) + t\}.\]
Write $\lambda(x,t)$ for the principal curvatures of $\partial \mathcal B_t$ at $x$. As $t$ increases the hypersurfaces $\partial \mathcal B_t^m$ translate in the direction of $e_{n+1}$ with pointwise normal speed not exceeding $G = \gamma(\lambda)$.
\end{lemma}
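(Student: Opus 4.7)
The plan is to parametrize $\partial \mathcal{B}_t^m$ as a Cartesian graph, compute its principal curvatures explicitly, and verify the asserted inequality by a direct point-wise comparison, using the monotonicity, symmetry and one-homogeneity of $\gamma$ together with the defining identity $c_m^{-1}=\gamma(\underbrace{0,\ldots,0}_m,1,\ldots,1)$. Specifically, I would write $\partial \mathcal{B}_t^m$ as the graph of $u(x,t)=\tfrac{c_m}{2}(x_1^2+\cdots+x_{n-m+1}^2)+t$ over $\mathbb{R}^n$. The $t$-dependence is only through the additive constant, so the boundary translates in the $e_{n+1}$ direction at unit speed; with outward normal $\nu=(Du,-1)/\sqrt{1+|Du|^2}$ the inward component of this velocity is $1/\sqrt{1+c_m^2 r^2}$, where $r^2:=x_1^2+\cdots+x_{n-m+1}^2$. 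This is the quantity to be bounded above by $G$.

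Next, I would diagonalise the Weingarten map. Since the hypersurface is isometric to a paraboloid of revolution in $\mathbb{R}^{n-m+2}$ crossed with $\mathbb{R}^{m-1}$, the flat factor will contribute $m-1$ zero principal curvatures, while the paraboloid contributes one ``radial'' eigenvalue $\lambda_{\mathrm{rad}}=c_m/(1+c_m^2 r^2)^{3/2}$ and $n-m$ equal ``angular'' eigenvalues $\lambda_{\mathrm{ang}}=c_m/\sqrt{1+c_m^2 r^2}$. This is a routine calculation using the standard graph formulas $g_{ij}=\delta_{ij}+u_iu_j$ and $A_{ij}=u_{ij}/\sqrt{1+|Du|^2}$, most cleanly carried out in an orthonormal frame where $Du$ lies along $e_1$.

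Finally, dropping the positive $\lambda_{\mathrm{rad}}$ by monotonicity, then applying symmetry and one-homogeneity together with the definition of $c_m$, will yield
\[\gamma(\lambda)\;\geq\; \gamma\bigl(0,\underbrace{\lambda_{\mathrm{ang}},\ldots,\lambda_{\mathrm{ang}}}_{n-m},\underbrace{0,\ldots,0}_{m-1}\bigr)\;=\;\lambda_{\mathrm{ang}}\cdot c_m^{-1}\;=\;\frac{1}{\sqrt{1+c_m^2 r^2}},\]
which matches the inward normal translation speed computed above (and is in fact strictly larger, since $\lambda_{\mathrm{rad}}>0$, so $\partial \mathcal{B}_t^m$ is a strict subsolution and will function as a proper barrier). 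The main (mild) obstacle will be confirming that the argument vector lies in $\Gamma$, so that $\gamma$ is actually defined there; this should follow from $\Cyl_m\subset\Gamma$, the openness of $\Gamma$, and the standard monotonicity property of admissible cones (whereby increasing a coordinate of a point in $\Gamma$ produces another point of $\Gamma$). Everything else is bookkeeping.
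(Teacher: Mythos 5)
Your proposal is correct and takes essentially the same route as the paper: parametrise $\partial\mathcal{B}_t^m$ as a graph, compute the principal curvatures (the paper gets $\lambda = \tfrac{c_m}{\sqrt{1+c_m^2|x'|^2}}\bigl(0,\dots,0,\,1-\tfrac{c_m^2|x'|^2}{1+c_m^2|x'|^2},\,1,\dots,1\bigr)$, which matches your $\lambda_{\mathrm{rad}}$ and $\lambda_{\mathrm{ang}}$), drop the strictly positive radial eigenvalue by monotonicity, invoke one-homogeneity and the definition of $c_m$, and compare with the inward normal speed $1/\sqrt{1+c_m^2 r^2}$. The only slight difference is cosmetic: you flag the need to check that the curvature vector lies in $\Gamma$, which the paper passes over silently; your proposed route (via $\Cyl_m\subset\Gamma$, symmetry, convexity and openness of $\Gamma$) does close that gap.
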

\begin{proof}
Fix $1 \leq m \leq \bar m(\Gamma)$. Given $x \in \mathbb{R}^{n}$, write $x'$ for the point $(x_1, \dots, x_{n-m+1},0,\dots,0)$. Then the family of embeddings $F:\mathbb{R}^n \times (-\infty,0] \to \mathbb{R}$ defined by
\[F(x,t) = (x, \tfrac{c_{m}}{2}|x'|^2 + t)\]
parameterises $\{\partial \mathcal B_t^m\}_{t\in(-\infty,0]}$. We choose the downward normal to $\partial \mathcal B_t^m$, 
\[\nu(x,t) = \frac{(c_m x', -1)}{\sqrt{1+c_{m}^2|x'|^2}},\]
so the principal curvatures of $\partial \mathcal B_t^m$ at $F(x,t)$ are
\[\lambda(x,t) = \frac{c_{m}}{\sqrt{1+c_{m}^2|x'|^2}} \bigg(0,\dots,0, 1 - \frac{c_{m}^2 |x'|^2}{1+c_{m}^2|x'|^2}, 1, \dots, 1\bigg),\]
where zero occurs with multiplicity $m-1$. Since $\gamma$ is increasing in each argument, we have
\[G(x,t) \geq \frac{c_{m}}{\sqrt{1+c_{m}^2 |x'|^2}} \gamma(\underbrace{0,\dots,0}_{m},1,\dots,1) =  \frac{1}{\sqrt{1+c_{m}^2|x'|^2}},\]
hence 
\[\partial_t F(x,t) \cdot \nu(x,t) = -\frac{1}{\sqrt{1+c_{m}^2|x'|^2}} \geq - G(x,t).\]
\end{proof}

Observe that as $a \to 0$ the rescaled domains $\{ a \mathcal B^m_{a^{-2} t}\}_{t \in (-\infty,0)}$ converge to $\{\mathcal C^{m}_t\}_{t\in(-\infty,0)}$ in $C_{\loc}^\infty(\mathbb{R}^{n+1} \times (-\infty,0))$. Exploiting this fact we prove:

\begin{proposition}
\label{prop:cyl_char_FNL}
Fix an admissible speed $\gamma \in C^\infty(\Gamma)$. Let $\{M_t = \partial \Omega_t\}_{t\in (-\infty,0)}$ be a convex $\gamma$-flow such that 
$M_t \subset \mathcal C_t^m$
for all $t <0$, and suppose $\cap_{t<0} \, \Omega_t$ is nonempty. Then $M_t = \partial \mathcal C_t^m$ for all $t<0$. 
\end{proposition}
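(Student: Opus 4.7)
The plan is to combine the parabolic strong maximum principle for convex $\gamma$-flows with a first-touching barrier argument using the translating bowls of Lemma~\ref{lem:bowl}. First I would normalize: any $q\in\bigcap_{t<0}\Omega_t$ must lie on the $m$-dimensional axis $\{x'=0\}$ of the cylinder, because $\Omega_t\subset\mathcal{C}_t^m$ and $\mathcal{C}_t^m$ collapses to this axis as $t\to 0^-$. After translating along the axis I may assume $0\in\Omega_t$ for every $t<0$.

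The hypersurface $\partial\mathcal{C}_t^m = S^{n-m}\times\mathbb{R}^m$ is itself a smooth convex $\gamma$-flow, and $M_t\subset\mathcal{C}_t^m$. Near any common boundary point $(p,t_\star)$ both hypersurfaces are graphs solving the same fully nonlinear parabolic equation, so the parabolic strong maximum principle tells us that a single tangential contact of $M_t$ with $\partial\mathcal{C}_t^m$ would force the connected components through $p$ to agree near $t_\star$; combined with connectedness of $\partial\mathcal{C}_t^m$ (valid since $n-m\geq 1$) and forward/backward uniqueness of smooth $\gamma$-flows, this would force $M_t=\partial\mathcal{C}_t^m$ on all of $(-\infty,0)$. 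The proof therefore reduces to producing one tangential contact. To do this I would argue by contradiction and assume $M_t\subset\mathrm{int}(\mathcal{C}_t^m)$ for every $t<0$. For each $a>0$ the rescaled bowl $\tilde{\mathcal{B}}^{(a)}_t := a\mathcal{B}^m_{a^{-2}t}$ is again a supersolution (by scale invariance of Lemma~\ref{lem:bowl}) and, by the observation immediately following Lemma~\ref{lem:bowl}, converges to $\mathcal{C}_t^m$ in $C^\infty_{\mathrm{loc}}(\mathbb{R}^{n+1}\times(-\infty,0))$ as $a\to 0$. I would run a first-touching argument in a shift parameter applied to $\tilde{\mathcal{B}}^{(a)}_t$ (for instance a translation along $e_{n+1}$, or a translation of the tip away from the axis in the $x'$-plane): at the critical value of the shift, the bowl boundary is tangent to $M_t$ at some interior point $p_a\in\mathrm{int}(\mathcal{C}_t^m)$. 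Passing $a\to 0$, the $C^\infty_{\mathrm{loc}}$ convergence of the bowls to the cylinder converts the sequence of contact points into a limiting tangential contact of $M_t$ with $\partial\mathcal{C}_t^m$, contradicting the assumption.

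The main obstacle is making the first-touching and limiting step rigorous in a noncompact setting: the bowls and the cylinder are unbounded and $\Omega_t$ may itself extend to infinity along the axis, so the barrier comparison must be localized to a bounded spacetime region (for example to the ``tip cap'' of the rescaled paraboloid lying below the height at which it meets $\partial\mathcal{C}_t^m$), and one must show the critical shift and contact point do not escape as $a\to 0$. A further subtlety is that the bowl is only a supersolution, not an exact $\gamma$-flow, so at the critical configuration one invokes a one-sided Hopf-type comparison in place of the full strong maximum principle; this is standard for uniformly parabolic fully nonlinear equations but requires bookkeeping here because $\partial\mathcal{C}_t^m$ has vanishing principal curvatures in the axis directions.
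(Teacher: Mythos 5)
Your approach is genuinely different from the paper's, and the core difficulty you identify --- that the first-touching and limiting step must be made rigorous in a noncompact setting --- is precisely the gap that prevents the argument from closing. The paper does not try to produce a tangential contact between $M_t$ and $\partial\mathcal C_t^m$ at all. Instead it analyses the structure of $M_0 := \cap_{t<0}\Omega_t$, which is convex and lies in the axis $\mathcal C_0^m$. Letting $\ell$ be the dimension of the largest affine subspace in $M_0$, the base case $\ell = 0$ is handled by a containment argument rather than a tangency argument: one assumes $M_0$ is noncompact, normalises so that $M_0$ contains a ray in the $e_{n+1}$ direction but escapes $\{x_{n+1}\geq 0\}$, and shows that for $a$ small enough the rescaled bowl $a\mathcal B^m_{a^{-2}T}$ \emph{contains} $M_T$ entirely (using compactness of $M_T\cap\{x_{n+1}\leq 0\}$, a strict-distance argument via the strong maximum principle, and the explicit geometry of the paraboloid). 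The avoidance principle then propagates the containment, and taking $t\to 0^-$ forces $M_0\subset\{x_{n+1}\geq 0\}$, a contradiction. This yields compactness of $M_0$, whence $M_t$ is a compact solution and $m=0$. The case $\ell\geq 1$ is then reduced to $\ell=0$ by splitting off the $\ell$-dimensional affine factor.

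The concrete problem with your sliding argument is that you never localise the contact. As $a\to 0$ the paraboloid opens up and converges to the cylinder locally, so without a priori geometric control on $\Omega_t$ far out along the axis the critical shift could tend to $\pm\infty$ and the contact points $p_a$ could escape; in the noncompact setting there is no compactness to rule this out, and in fact controlling it essentially requires the same structural analysis of $M_0$ that the paper performs. A secondary but real issue is that the bowl in Lemma~\ref{lem:bowl} is only a supersolution and is an \emph{exact} solution at the tip (where $|x'| = 0$); if the critical contact occurs at the tip, the one-sided Hopf comparison you invoke gives no contradiction, since the inequality $\partial_t F\cdot\nu\geq -G$ is an equality there. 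These two gaps are not cosmetic: resolving them would in effect recreate the paper's case analysis on $M_0$, which is why the paper argues by containment-plus-avoidance and dimension reduction rather than by touching-plus-strong-maximum-principle.
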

\begin{proof}
If $m =0$ the claim is a simple consequence of the avoidance principle, so suppose $m \geq 1$. 
Observe that $M_0:= \cap_{t < 0} \, \Omega_t $ is convex and lies in the $m$-dimensional linear subspace
\[\mathcal C_0^m = \{x_1= \dots = x_{n-m+1}=0\}.\]
Let $\ell$ denote the dimension of the largest affine subspace of $\mathbb{R}^{n+1}$ contained in $M_0$. The key step is to prove the claim in case $\ell = 0$, since the remaining cases then follow by a dimension-reduction argument.

Suppose then $\ell =0$. In this case we claim $M_0$ is compact, so suppose for a contradiction $M_0$ is noncompact. Then there is a unit vector $e \in M_0$ with the following properties:
\begin{itemize}
\item For each $r \geq 0$, $r e \in M_0$.
\item The quantity $r_0:=\inf_{x \in M_0} x \cdot e$ is finite.
\item For each $r \geq r_0$ the set $M_0 \cap \{x\cdot e = r\}$ is compact.
\end{itemize}
Applying a rigid motion in the ambient space (leaving $\mathcal C_0^m$ fixed), we can arrange that $e = e_{n+1}$ and $r_0 < 0$. In particular $M_0$ fails to lie in the halfspace $\{x_{n+1} \geq 0\}$. Let $E$ denote the line through the origin parallel to $e$. 

Fix $T <0$. We have arranged that $M_T \cap \{x_{n+1} \leq 0\}$ is compact, so since $M_T \subset \mathcal C_T^m$,
\begin{equation}
\label{eq:cyl_dist_pos}
\dist(M_T \cap \{x_{n+1} \leq 0\}, \,\partial \mathcal C_T^m) >0. 
\end{equation}
Indeed, if this fails the strong maximum principle implies $M_T = \partial \mathcal C_T^m$, but this is incompatible with $\ell =0$. 

We now make use of Lemma \ref{lem:bowl}. Taking $a$ sufficiently small, $a \mathcal B^{m}_{a^{-2} T}$ can be made arbitrarily close to $ \mathcal C_T^m$ in any compact subset of $\mathbb{R}^{n+1}$. In particular, by \eqref{eq:cyl_dist_pos} we can choose $a$ so small (depending on $T$) that 
\[M_T \cap \{x_{n+1} \leq 0\} \subset a \mathcal B^{m}_{a^{-2}T}.\]
On the other hand, decreasing $a$ further if necessary, we can arrange that 
\[M_T \cap \{x_{n+1} \geq 0\} \subset \mathcal C_{T}^m \cap \{x_{n+1} \geq 0\} \subset a \mathcal B_{a^{-2}T}^{m} ,\]
hence $M_T \subset a\mathcal B^m_{a^{-2}T}$. Finally, we observe that for every $t < 0$ there is an $R<\infty$ for which
\[ \mathcal C^m_t \cap \{x_{n+1}\geq R\} \subset a \mathcal B_{a^{-2}t}^{m},\]
so Lemma \ref{lem:bowl} and the avoidance principle imply $M_t \subset a \mathcal B^{m}_{a^{-2}t}$ for all $t \in [T,0)$. 

With this we have reached a contradiction, since $a \mathcal B_0^{m}$ lies in $\{x_{n+1} \geq 0\}$, but $M_0$ does not. Hence our original assumption was false; that is, $M_0$ is compact. It follows that $M_t$ is compact for each $t <0$. We are assuming $M_t$ does not go extinct until $t =0$, so the avoidance principle implies $M_t$ and $\partial \mathcal C_t^m$ coincide for all $t <0$. Since $M_t$ is compact, $m=0$ and $\{M_t\}_{t\in(-\infty,0)}$ is a family of homothetically shrinking spheres. This concludes the case $\ell = 0$. 

Now suppose $\ell\geq 1$ and let $\Sigma$ be an $\ell$-dimensional affine subspace of $\mathbb{R}^{n+1}$ sitting inside $M_0$. Then since $\Sigma \subset \Omega_t$ for all $t<0$, by convexity, we have the splitting $M_t = \Sigma \times M_t^\perp$ where $M_t^\perp := M_t \cap \Sigma^\perp$. The family $\{M_t^\perp\}_{t\in(-\infty,0)}$ is an $(n -\ell)$-dimensional $\gamma^{(\ell)} $-flow in $\Sigma^\perp \cong \mathbb{R}^{n-\ell+1}$, where 
\[\gamma^{(\ell)} (z) := \gamma(0, \dots, 0, z_1, \dots, z_{n-\ell}).\]
Moreover, by maximality of $\ell$ the only affine subspaces in 
\[M_0^\perp := \cap_{t <0} \, \Omega_t^\perp = M_0 \cap \Sigma^\perp\]
have dimension zero. Therefore, since $M_t^\perp \subset \mathcal C^m_t \cap \Sigma^\perp$ for each $t<0$ and $\{\mathcal C_t^m \cap \Sigma^\perp\}_{t\in(-\infty,0)}$ is a $\gamma^{(\ell)}$-flow in $\Sigma^\perp$, exactly as above, $M_t^\perp$ is a family of homothetically shrinking spheres. Consequently, $\{M_t\}_{t\in(-\infty,0)}$ is a family of homothetically shrinking cylinders. Since $M_t$ does not go extinct until $t =0$ and lies inside $\mathcal C^m_t$ for all $t<0$, the avoidance principle implies $M_t = \partial \mathcal C_t^m$ for all $t <0$. 
\end{proof}

The proof of Proposition \ref{thm:Wang_FNL} is by induction, with the following lemma comprising the base step. Recall the notation $\Gamma_+ := \{\lambda \in \mathbb{R}^n : \lambda_i > 0\}$. 

\begin{lemma}
\label{lem:Wang_FNL_min}
Fix a concave or convex admissible speed $\gamma \in C^\infty(\Gamma)$. Let $\{M_t=\partial \Omega_t\}_{t\in(-\infty,0)}$ be a uniformly parabolic convex entire $\gamma$-flow and set $m = \bar m(\Gamma)$. If $\cap_{t<0}\, \Omega_t$ contains an $m$-dimensional affine subspace of $\mathbb{R}^{n+1}$, $\{M_t\}_{t\in(-\infty,0)}$ is the product of $\mathbb{R}^{m}$ with a family of homothetically shrinking spheres.  
\end{lemma}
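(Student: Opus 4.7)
My plan is to reduce the lemma to the classification of compact Type I convex ancient $\gamma^{(m)}$-flows from \cite{Lang-Lyn20}, using Proposition \ref{prop:cyl_char_FNL} as the key structural tool. Throughout I use the Type I bound $|A| \le K/\sqrt{-t}$ available in the ambient Proposition \ref{thm:Wang_FNL}.

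\emph{Splitting.} By convexity and the inclusion $\Sigma \subset \cap_{t<0}\Omega_t$, the recession cone of $\Omega_t$ contains the directions parallel to $\Sigma$, so $\Omega_t = \Sigma \times \Omega_t^\perp$ and $M_t = \Sigma \times M_t^\perp$. Here $\{M_t^\perp\}_{t\in(-\infty,0)}$ is an entire, uniformly parabolic, Type I, convex, ancient $\gamma^{(m)}$-flow in $\Sigma^\perp \cong \mathbb{R}^{n-m+1}$, with $\gamma^{(m)}(z) := \gamma(0,\dots,0,z_1,\dots,z_{n-m})$ defined on $\Gamma^{(m)}$ and $\bar m(\Gamma^{(m)}) = 0$ by maximality. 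Applying the splitting theorem to $M_t^\perp$ combined with this maximality (a further splitting $\mathbb{R}^k \times N_t$ with $N_t$ strictly convex would, by averaging the positive principal curvatures of $N_t$ via symmetry and convexity of $\Gamma$, force $\Cyl_{m+k} \subset \Gamma$, contradicting $m = \bar m(\Gamma)$) yields strict convexity of $M_t^\perp$.

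\emph{Blow-down identification.} The heart of the argument is to identify the blow-down as a shrinking sphere. For $a_j \to 0^+$, set $\tilde M_s^j := a_j M_{a_j^{-2}s}^\perp$. The Type I bound rescales to $|\tilde A^j| \le K/\sqrt{-s}$; combined with entire-ness (used to place definite-sized balls inside $\tilde\Omega_s^j$ for $s<0$ fixed and $j$ large), Corollary \ref{cor:conv_curv_bd}, and Krylov's $C^{2,\alpha}$ theorem (in the form used for Lemma \ref{lem:Harnack}), this yields smooth subsequential convergence in $C^\infty_{\loc}(\mathbb{R}^{n-m+1} \times (-\infty,0))$ to a convex ancient $\gamma^{(m)}$-flow $\tilde M_s^\infty$. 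Applying the splitting theorem plus maximality to the limit, together with Lemma \ref{lem:tech} (to rule out a degenerate hyperplane limit, which via Krylov-Safonov propagation of vanishing curvature would split off an extra Euclidean factor of the original and contradict $m = \bar m(\Gamma)$), forces $\tilde M_s^\infty$ to be strictly convex. An argument using Proposition \ref{prop:asymptotic_cone} together with entire-ness inherited in the limit then shows $\tilde M_s^\infty$ is compact: if $v$ were in its recession cone, strict convexity would preclude $-v$ from being in recession at any $s$, bounding $\cup_s\tilde\Omega_s^\infty$ in the $-v$ direction and contradicting the entire property. The uniqueness result from \cite{Lang-Lyn20} then identifies $\tilde M_s^\infty$ as a homothetically shrinking sphere $\partial\mathcal{C}_s^0$.

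\emph{Conclusion.} After translating coordinates so the asymptotic extinction point lies at the origin, we obtain $0 \in \cap_{t<0}\Omega_t^\perp$. The forward avoidance principle applied to $\{M_t^\perp\}$ against the shrinking-sphere barrier $\{\partial\mathcal{C}_t^0\}$ (starting from very negative times where the Hausdorff convergence $a_j\Omega_{a_j^{-2}s}^\perp \to \mathcal{C}_s^0$ gives tight containment) yields $M_t^\perp \subset \mathcal{C}_t^0$ for all $t < 0$. Proposition \ref{prop:cyl_char_FNL} applied with $m = 0$ then gives $M_t^\perp = \partial\mathcal{C}_t^0$, and $M_t = \Sigma \times \partial\mathcal{C}_t^0$ is the asserted cylindrical solution. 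The main obstacle is the second paragraph: showing that the blow-down limit is both strictly convex (not flat) and compact (not translator-like), without an analogue of Xu-Jia Wang's Lemma 2.9, requires a delicate simultaneous application of the splitting theorem, Lemma \ref{lem:tech}, Proposition \ref{prop:asymptotic_cone}, and the \cite{Lang-Lyn20} classification, along with careful tracking of entire-ness through the blow-down.
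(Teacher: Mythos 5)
The key observation you miss is that uniform parabolicity of the original $\gamma$-flow, combined with $m = \bar m(\Gamma)$ being maximal, automatically forces $M_t^\perp$ to be \emph{uniformly} convex, not merely strictly convex. The paper's proof obtains this elementarily: uniform parabolicity supplies a closed symmetric convex cone $\Gamma_0 \Subset \Gamma$ containing $\lambda$, so $\lambda^\perp$ lies in $\Gamma_0^\ell := \psi^{-1}(\Gamma_0)$ where $\psi(z) = (0,\dots,0,z)$; if the normalized slice $Z := \Gamma_0^\ell \cap \{\tr = 1\}$ met $\partial\Gamma_+^\ell$, then cyclically permuting entries and averaging within the symmetric convex cone $\Gamma_0$ would produce a point of $\Cyl_{m+1}$ in $\Gamma_0 \subset \Gamma$, contradicting maximality of $m$. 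Compactness of $Z \subset \Gamma_+^\ell$ gives $\lambda_1^\perp \geq \varepsilon H^\perp$, whence $M_t^\perp$ is compact by Hamilton \cite{Ham94}, and the classification in \cite{Lang-Lyn20} (Theorem 1.7) finishes (with \cite{BTL20} substituted when $n - m = 1$). No splitting theorem, no blow-down, and no Type I bound are needed at this stage.

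Your route has two concrete problems. First, you invoke a splitting theorem for $M_t^\perp$, but the lemma's hypotheses (concave or convex admissible $\gamma$, uniformly parabolic, entire) do not include such a hypothesis, and $M_t^\perp$ is a $\gamma^{(m)}$-flow, for which the splitting hypothesis on $\gamma$ would not directly apply in any case; even if it did, you would only obtain strict convexity, which is strictly weaker than what the direct cone argument gives. Second, and more seriously, your compactness argument for the blow-down is not correct: knowing that each $\tilde\Omega_s^\infty$ is strictly convex with a pointed recession cone containing $v$ but not $-v$ does \emph{not} bound $\cup_s \tilde\Omega_s^\infty$ in the $-v$ direction. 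The bowl soliton is exactly the kind of example this argument fails to exclude --- it is strictly convex, ancient, sweeps out all of space, and at every time its recession cone is a single ray. Ruling out such translator-like behavior is precisely the reason the paper first establishes uniform convexity: with $\lambda_1^\perp \geq \varepsilon H^\perp$, Hamilton's theorem closes the gap cleanly.
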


\begin{proof}
Suppose $M_0 := \cap_{t < 0}\, \Omega_t$ contains an affine subspace of dimension $m$, which we denote by $\Sigma$. Since $M_0 \subset \Omega_t$ for all $t < 0$ and $\Omega_t$ is convex we have the splitting $M_t = \Sigma \times M^\perp_t$, where each $M^\perp_t$ is a smooth hypersurface in $\Sigma^\perp \cong \mathbb{R}^{\ell+1}$, $\ell:=n-m$. In particular the principal curvatures of $M_t$ lie in the facet of $\partial \Gamma_+$ defined by $\{\lambda_1 = \dots = \lambda_m =0\}$. Let us write $\Gamma^\ell_+ := \{z \in \mathbb{R}^\ell : z_i > 0\}$ and denote by $\psi: \Gamma^\ell_+ \to \Gamma$ the map $\psi(z) := (0, \dots, 0, z)$. Then the family $\{M_t^\perp\}_{t \in (-\infty,0)}$ is a convex $\gamma_\ell$-flow in $\mathbb{R}^{\ell+1}$, where $\gamma_\ell \in C^\infty(\Gamma_+^\ell)$ is the composition $\gamma \circ \psi$. 

In case $\ell =1$, up to a rescaling of time, $M_t^\perp$ is an entire convex ancient solution of curve-shortening flow in $\mathbb{R}^2$. Hence $\{M_t^\perp\}_{t\in(-\infty,0)}$ is a family of shrinking circles by \cite{BTL20}, and the claim is proven. 

Assume then $\ell \geq 2$. Let us write $\lambda_1^\perp \leq \dots \leq \lambda_\ell^\perp$ and $H^\perp$ for the principal curvatures and mean curvature of $M_t^\perp$. By uniform parabolicity there is a closed symmetric convex cone $\Gamma_0$ satisfying $\Gamma_0 \cap \{z \in \mathbb{R}^n : \tr(z) = 1\} \subset \Gamma$ such that $\lambda$ takes values in $\Gamma_0$. Hence $\lambda^\perp$ takes values in $\Gamma_0^\ell := \psi^{-1}(\Gamma_0)$. 

We claim $Z:=\Gamma_0^\ell \cap\{z \in \mathbb{R}^\ell: \tr(z) =1\}$ is a compact subset of $\Gamma_+^\ell$. If not, there is a $z \in Z$ such that $z_1 = 0$. Equivalently, there is a $z \in \Gamma_0$ such that $\tr(z) = 1$ and $z_1 = \dots = z_{m+1} = 0$. Since $\Gamma_0$ is a symmetric convex cone, we can cyclically permute the entries $z_i$, $m+2 \leq i \leq n$, and sum up to find 
\[\bar z = (\underbrace{0, \dots, 0}_{m+1}, 1, \dots ,1)\]
is an element of $\Gamma_0$. However, $\bar z \in \Cyl_{m+1}$. This is a contradiction since, by definition, $m=\bar m(\Gamma)$ is the maximal integer such that $\Cyl_m \subset \Gamma$. 

We thus conclude $\varepsilon := \min\{z_i: z \in Z\}$ is strictly positive, and consequently find 
\[ \lambda_1^\perp \geq \varepsilon H^\perp\]
on $M_t^\perp$ for all $t \leq 0$. In other words $M_t^\perp$ is uniformly convex, and by Hamilton's result \cite{Ham94}, $M_t^\perp$ is compact. In summary $\{M_t^\perp\}_{t\in(-\infty,0)}$ is a uniformly parabolic $\gamma_\ell$-flow which is also compact and uniformly convex. Theorem 1.7  in \cite{Lang-Lyn20} thus implies $\{M_t^\perp\}_{t\in(-\infty,0)}$ is a family of shrinking spheres.
\end{proof}

\begin{proof}[Proof of Proposition \ref{thm:Wang_FNL}]
Let $\{M_t\}_{t \in(-\infty,0)}$ be a uniformly parabolic convex entire $\gamma$-flow, and suppose there is a $K<\infty$ such that 
\[|A| \leq \frac{K}{\sqrt{-t}}\]
for all $t<0$. By uniform parabolicity, after increasing $K$ we may assume $G \leq K/\sqrt{-t}$ for all $t <0$. Integrating this speed bound (as in Lemma \ref{lem:type-I_envelope}) we find that $M_0 := \cap_{t < 0} \, \Omega_t$ is nonempty. Let $m$ be the dimension of the largest affine subspace contained in $M_0$. 

We proceed by induction on $m$. By Lemma \ref{lem:Wang_FNL_min}, if $m = \bar m(\Gamma)$ then $\{M_t\}_{t\in(-\infty,0)}$ is a family of spheres or cylinders.

Suppose then $m = k< \bar m(\Gamma)$, and that the claim has been proven for all solutions which satisfy the hypotheses of the proposition but contain an affine subspace of dimension $k+1$. Let $\Sigma$ be an affine subspace of dimension $k$ in $M_0$, and shift the solution if necessary so $0 \in \Sigma$. If $M_0 \cap \Sigma^\perp$ is compact then $M_t$ is a compact convex ancient solution of Type I, hence a sphere by \cite{Lang-Lyn20} (see also the argument in the proof of Theorem \ref{thm:main_FNL} below).

Now consider the case when $M_0 \cap \Sigma^\perp$ is noncompact. Then, since $M_0 \cap \Sigma^\perp$ is convex, it contains a ray $\{r e: r \geq 0\}$, $e \in \partial B(0,1)$. If $M_0\cap \Sigma^\perp$ contains the line $E:=\{r e: r \in \mathbb{R}\}$ then $M_0$ contains the span of $e$ and $\Sigma$, which has dimension $k+1$, so in this case the inductive hypothesis implies $M_t$ is cylindrical as required. Alternatively,
\begin{equation}
\label{eq:Wang_FNL_contra}
E \not \subset M_0.
\end{equation}
We complete the proof by showing this leads to a contradiction. 

Suppose \eqref{eq:Wang_FNL_contra} holds and form a sequence of shifted solutions $\{M_t^j:=M_t -j e\}_{t\in(-\infty,0)}$. Define also  $\Omega_t^j := \Omega_t - j e$. We claim $\{M_t^j\}_{t\in(-\infty,0)}$ subconverges in $C^\infty_{\loc}(\mathbb{R}^{n+1} \times (-\infty,0))$. Since $\{M_t^j\}_{t\in(-\infty,0)}$ is entire, for every $r>0$ there is a time $\tau<0$ such that 
\[B(0,r) \subset \Omega_{\tau} \subset \Omega_{\tau}^j,\]
hence $B(0,r) \subset \Omega_t^j$ for all $t \leq \tau$. We also know the second fundamental form of $M_t^j$ satisfies $|A^j| \leq K / \sqrt{ -\tau}$ for all $t \leq \tau$, and since $M_t^j$ reaches the origin as $t \to 0$ the avoidance principle guarantees $\dist(M_{\tau}^j, 0)$ is bounded uniformly in $j$. Finally, in every compact subset of $\mathbb{R}^{n+1}\times(-\infty,\tau]$ we have a lower bound for $G^j$ that is uniform in $j$; if not then Lemma \ref{lem:tech} implies $\{\Omega_t^j\}_{t\in(-\infty, \tau]}$ subconverges to a stationary slab or halfspace, but $\Omega_t \subset \Omega_t^j$ and $M_t$ is entire, so this is impossible.

Our uniform upper bound for $|A^j|$ ensures that, for $t \leq \tau$, each point in $M_t^j$ is contained in a neighbourhood of uniform size that can be expressed as the graph of a function, either over the tangent space, or over $\partial B(0,r)$. As in Lemma \ref{lem:Harnack}, Krylov's estimate implies H\"{o}lder bounds for the Hessian and time derivative of each such local height function, and with these in hand we can bootstrap using the Schauder estimates to conclude all of the higher covariant derivatives of $A^j$ are bounded independently of $j$ in compact subsets of $\mathbb{R}^{n+1} \times (-\infty, \tau]$. We note the bootstrapping step requires our lower bound on $G^j$, since all of the derivatives of $\gamma$ in $A$ of order at least two blow up as $A \to 0$. With these derivative estimates established we may pass to a subsequence in $j$ such that $\{M_t^j\}_{t \in(-\infty,0)}$ converges in $C^\infty_{\loc}(\mathbb{R}^{n+1} \times (-\infty,\tau])$. Since $r$ was arbitrary a diagonal argument shows that $\{M_t^j\}_{t\in(-\infty,0)}$ subconverges to a limit $\{M_t'= \partial \Omega_t'\}_{t\in (-\infty,0)}$ in $C^\infty_{\loc} (\mathbb{R}^{n+1}\times(-\infty,0))$.

Now, $\{M_t'\}_{t \in(-\infty,0)}$ is uniformly parabolic and convex, and a straightforward argument using the avoidance principle shows $\{M_t'\}_{t \in(-\infty,0)}$ is also entire. By construction, $\Omega_t'$ contains $E$ and $M_0$, so since $\Omega_t'$ is convex it contains the span of $e$ and $\Sigma$, which has dimension $k+1$. The inductive hypothesis thus implies $\{M_t'\}_{t\in(-\infty,0)}$ is a family of shrinking cylinders. On the other hand $\Omega_t \subset \cl(\Omega_t')$ for all $t<0$, so appealing to Proposition \ref{prop:cyl_char_FNL} we conclude that $M_t = M_t'$ for all $t <0$. In particular $\Omega_t$ contains $E$, contradicting \eqref{eq:Wang_FNL_contra}.
\end{proof}

\subsection{The general case} \label{subsec:FNL_TypeI}

Consider an admissible speed $\gamma\in C^\infty(\Gamma)$ which is convex, or concave and inverse-concave, and admits a splitting theorem. Let $\{M_t=\partial \Omega_t\}_{t\in(-\infty,0]}$ be a uniformly parabolic convex ancient $\gamma$-flow of Type I. Our goal is to show $M_t$ is spherical or cylindrical. Without loss of generality we may assume there is a $K<\infty$ such that
\[\sup_{t \leq 0} \,\sup_{M_t} \sqrt{-t} \, G \leq \frac{K}{2},\]
where $G:=\gamma(\lambda)$. Translating as necessary, we may assume $M_0$ contains the origin. 

Exactly as in Lemma \ref{lem:type-I_envelope}, integrating the Type I property yields an upper bound for the displacement of $M_t$.
\begin{lemma}
\label{lem:type-I_envelope_FNL}
For every $t\leq 0$ we have $M_t \subset D(\Omega_0 , K\sqrt{-t})$.
\end{lemma}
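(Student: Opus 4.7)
The proof plan is essentially to copy the argument of Lemma \ref{lem:type-I_envelope} verbatim, replacing $H$ with $G = \gamma(\lambda)$. The author has already set up the Type I bound $\sqrt{-t}\, G \leq K/2$ for all $t \leq 0$, and has arranged that $0 \in M_0$, so the ingredients are in place. The only conceptual change from the MCF case is that the speed is now fully nonlinear; however, this is irrelevant for the present lemma since we only need the pointwise upper bound on the normal speed.

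More concretely, I would start by fixing a global parameterization $F : M \times (-\infty, 0] \to \mathbb{R}^{n+1}$ with $M_t = F(M, t)$ and $\partial_t F = -G\nu$. Such a parameterization exists for convex ancient solutions with bounded second fundamental form (which is guaranteed by the Type I assumption together with uniform parabolicity), exactly as noted in Section \ref{sec:MCF}. Given any $x \in M_t$, write $x = F(p, t)$ for some $p \in M$. Then
\[
|F(p, 0) - F(p, t)| \leq \int_t^0 |G(p, \tau)|\, d\tau \leq \int_t^0 \frac{K/2}{\sqrt{-\tau}}\, d\tau = K\sqrt{-t}.
\]
Since $F(p, 0) \in M_0 \subset \cl(\Omega_0)$, this gives $\dist(x, \Omega_0) \leq K\sqrt{-t}$, which is precisely the claim that $M_t \subset D(\Omega_0, K\sqrt{-t})$.

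There is really no obstacle here; the lemma is a direct integration of the Type I hypothesis against the flow equation, and works identically for any speed satisfying the stated bound. The only thing worth double-checking is the existence of the global parameterization in the fully nonlinear setting, but since the flow is uniformly parabolic and the speed is smooth on the cone $\Gamma$, the standard short-time existence/uniqueness theory ensures that the flow can be parameterized globally by normal motion whenever $|A|$ is bounded, which is ensured by Type I on any slab $[t_0, 0]$.
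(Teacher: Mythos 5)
Your proof is correct and matches the paper's approach exactly: the paper proves Lemma \ref{lem:type-I_envelope_FNL} by the same integration argument as Lemma \ref{lem:type-I_envelope}, with $H$ replaced by $G$, and explicitly notes this. Your attention to the existence of the global normal parameterization is a reasonable extra check and does not change the argument.
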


The next step is to establish a lower speed bound valid near the spacetime origin. 

\begin{lemma}
\label{lem:lower-speed_FNL}
For each $L>0$ there is a positive constant $c_L$ such that the inequality
\[\sqrt{1-t}\, G(x,t) \geq c_L \]
holds for every $t\leq 0$ and $x\in M_t \cap B(0,L\sqrt{-t} )$. 
\end{lemma}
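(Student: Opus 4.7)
The plan is to follow the structure of the proof of Lemma \ref{lem:lower-speed_MCF}, using the compactness statement Lemma \ref{lem:tech} as a substitute for the Ecker--Huisken interior estimates and the strong maximum principle employed in the mean curvature setting. Suppose for contradiction that the estimate fails for some $L > 0$: there exist times $t_j \to -\infty$ and points $x_j \in M_{t_j} \cap B(0, L\sqrt{-t_j})$ with $\sqrt{-t_j}\, G(x_j, t_j) \to 0$. Write $a_j := 1/\sqrt{-t_j}$ and consider the rescaled $\gamma$-flows
\[ M_t^j := a_j M_{a_j^{-2}(t-1)}, \qquad \Omega_t^j := a_j \Omega_{a_j^{-2}(t-1)}, \qquad t \in (-\infty, 0]. \]
Setting $y_j := a_j x_j \in B(0, L)$, we have $G^j(y_j, 0) = a_j^{-1} G(x_j, t_j) \to 0$. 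The Type I hypothesis combined with uniform parabolicity, which yields a pointwise bound $|A| \leq C G$ for a constant depending only on the parabolicity cone, gives a uniform bound $|A^j| \leq C K$ on $M_t^j$ for all $t \leq 0$; the rescaled flows also inherit uniform parabolicity with the same cone $\Gamma_0 \Subset \Gamma$.

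After passing to a subsequence and applying an ambient rigid motion, we may arrange $y_j \to 0$ and $\nu^j(y_j, 0) \to e_{n+1}$. Lemma \ref{lem:tech} then produces a hyperplane $\Sigma$, which we may take to be $\{x_{n+1} = 0\}$, and a closed interval $I$ such that, along a further subsequence, $\Omega_0^j$ converges in the Hausdorff topology to $\mathcal K := \Sigma \times I$. By Lemma \ref{lem:type-I_envelope_FNL} we have $\Omega_0^j \subset D(a_j \Omega_0, K)$, so passing to the Hausdorff limit gives
\[ \mathcal K \subset D(\mathcal T_\infty \Omega_0, K). \]
Since this $K$-neighbourhood contains the full hyperplane $\Sigma$ and, by Proposition \ref{prop:asymptotic_cone}, $\mathcal T_\infty \Omega_0$ is a closed convex cone of dimension at most $n$, a simple dimension count (a bounded neighbourhood of a lower-dimensional set or a proper sub-cone of $\Sigma$ cannot cover $\Sigma$) forces $\mathcal T_\infty \Omega_0 = \Sigma$.

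Fix any $p \in \Omega_0$; the defining property of the tangent cone at infinity then gives $p + \Sigma \subset \cl(\Omega_0) \subset \cl(\Omega_t)$ for every $t \leq 0$. Convexity of $\Omega_t$ together with the presence of a hyperplane in its closure forces $\Omega_t$ to be a slab or halfspace with boundary parallel to $\Sigma$, so $M_t$ consists of one or two stationary parallel hyperplanes and $G \equiv 0$ along the flow. This contradicts uniform parabolicity, which requires $G > 0$ pointwise, completing the proof. The main technical content lies in verifying the uniform curvature bound for the rescaled sequence so that Lemma \ref{lem:tech} applies; the rest is a direct translation of the mean curvature flow argument.
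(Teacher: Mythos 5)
Your proposal is correct and follows essentially the same argument as the paper: blow down along the bad sequence, use the Type I bound plus uniform parabolicity to control $|A^j|$, invoke Lemma~\ref{lem:tech} to produce the limiting slab $\Sigma\times I$, combine with Lemma~\ref{lem:type-I_envelope_FNL} and Proposition~\ref{prop:asymptotic_cone} to force $\mathcal T_\infty\Omega_0$ to be a hyperplane, and derive a contradiction with $G>0$. The only differences are cosmetic (a $t\mapsto t-1$ shift so that Lemma~\ref{lem:tech} applies at $t=0$ rather than at $t=-1$, and phrasing the hyperplane identification as a dimension count rather than via a cone containing a translate of $\Sigma$).
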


\begin{proof}
We argue by contradiction. If the claim is false for some $L>0$, there is a sequence of times $t_j \to -\infty$ and points $x_j \in M_{t_j} \cap B(0, L\sqrt{-t_j})$ such that 
\[\sqrt{-t_j}\,G(x_j,t_j) \to 0.\]
Let us write $a_j = 1/\sqrt{-t_j}$ and form a sequence of rescaled solutions,
\[ \{M_t^j:=a_j M_{a_j^{-2} t}\}_{t\in(-\infty,-1]}.\] 
Lemma \ref{lem:type-I_envelope_FNL} implies the inclusion $M_{t}^j \subset D(a_j \Omega_0, K\sqrt{-t})$ and the Type I assumption and uniform parabolicity imply $A^j$ is bounded independently of $j$ for all $t \leq -1$. We have
\[G^j(a_j x_j, -1) = \sqrt{-t_j} \, G(x_j, t_j) \to 0,\]
so appealing to Lemma \ref{lem:tech} we find there is a hyperplane $\Sigma$ and an interval $I$ such that $\Omega_{-1}^{j}$ subconverges to $\Sigma \times I$ in the Hausdorff topology. In particular,
\[\Sigma \times I \subset D(\mathcal T_\infty \Omega_0, K).\]
This is only possible if $\mathcal T_\infty \Omega_0$ contains a translate of $\Sigma$, so by Proposition \ref{prop:asymptotic_cone} we conclude $\mathcal T_\infty \Omega_0$ is a hyperplane. Since $\Omega_t$ is convex and contains $\mathcal T_\infty \Omega_0$, $M_t$ is a stationary hyperplane or pair of hyperplanes, but this contradicts $G >0$. 
\end{proof}

With the lower speed bound in hand, the same argument used to prove Lemma \ref{lem:displacement_FNL} implies the following lower bound for displacement.

\begin{lemma}
\label{lem:displacement_FNL}
The ball $B(0, c_K \sqrt{-t})$ is contained in $\Omega_t$ for every $t \leq -3$. 
\end{lemma}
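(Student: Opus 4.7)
The plan is to repeat the argument of Lemma \ref{lem:displacement} verbatim, with $H$ replaced by the general speed $G$ and with the two MCF ingredients (Lemmas \ref{lem:type-I_envelope} and \ref{lem:lower-speed_MCF}) replaced by their fully nonlinear analogues (Lemmas \ref{lem:type-I_envelope_FNL} and \ref{lem:lower-speed_FNL}).

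Concretely, set $f(t) := \dist(M_t, 0)$. Since $\Omega_t$ is convex and contains the origin (which follows from $0 \in M_0$ together with strict $G>0$ and the fact that $\Omega_t$ grows as $t$ decreases), $f(t)$ is realised as $|x|$ for some $x \in M_t$. The function $f$ is locally Lipschitz in $t$, so differentiable almost everywhere. At a time of differentiability, if $x \in M_t$ achieves $f(t)$, then the outward normal to $M_t$ at $x$ points in the direction $x/|x|$, so the evolution equation $\partial_t F = -G\nu$ gives the standard upper barrier
\[f'(t) \leq -G(x,t).\]

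Next, the parameterisation argument used in Lemma \ref{lem:type-I_envelope_FNL} shows that the trajectory $F(p_0, \cdot)$ of the point with $F(p_0,0) = 0$ stays within $K\sqrt{-t}$ of the origin, so $f(t) \leq K\sqrt{-t}$; in particular any such closest point $x$ lies in $B(0, K\sqrt{-t})$. Lemma \ref{lem:lower-speed_FNL} with $L = K$ therefore yields $\sqrt{1-t}\, G(x,t) \geq c_K$, and we obtain
\[f'(t) \leq -\frac{c_K}{\sqrt{1-t}}.\]
Integrating from $t < 0$ to $0$ with $f(0)=0$ gives $f(t) \geq 2c_K(\sqrt{1-t}-1)$. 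For $t \leq -3$ one has $\sqrt{1-t}-1 \geq c\sqrt{-t}$ for an absolute constant $c$, so after renaming $c_K$ we conclude $B(0, c_K\sqrt{-t}) \subset \Omega_t$.

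There is no real obstacle here beyond verifying that the closest-point argument applies unchanged in the fully nonlinear setting: the only non-routine point is the observation that the Type I displacement bound forces the closest point on $M_t$ to the origin to lie in the region where Lemma \ref{lem:lower-speed_FNL} supplies the lower speed bound, which is immediate from the parameterisation argument behind Lemma \ref{lem:type-I_envelope_FNL}. Everything else is an ODE comparison.
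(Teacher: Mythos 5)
Your proof is correct and follows exactly the route the paper intends: it is the verbatim transcription of the proof of Lemma \ref{lem:displacement}, with $H$ replaced by $G$ and Lemmas \ref{lem:type-I_envelope}, \ref{lem:lower-speed_MCF} replaced by Lemmas \ref{lem:type-I_envelope_FNL}, \ref{lem:lower-speed_FNL}. You also correctly flag the one point that needs a word of justification, namely that the integral bound on the trajectory starting at the origin (rather than just the stated containment $M_t\subset D(\Omega_0,K\sqrt{-t})$) is what places the closest point inside $B(0,K\sqrt{-t})$ so that the lower speed bound applies.
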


We now take a blow-down of $M_t$ and conclude:

\begin{lemma}
\label{lem:final_slice_FNL}
$\mathcal T_\infty \Omega_0$ is an affine subspace of $\mathbb{R}^{n+1}$. 
\end{lemma}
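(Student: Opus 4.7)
The plan is to follow the same blow-down strategy used in Lemma \ref{lem:split}, but with Proposition \ref{thm:Wang_FNL} replacing Xu-Jia Wang's Lemma 2.9 at the final step. Fix a sequence $a_j \to 0$ and define the rescaled flows $\{M_t^j := a_j M_{a_j^{-2} t}\}_{t\in(-\infty,0]}$. These are again uniformly parabolic convex ancient $\gamma$-flows satisfying $|A^j|, G^j \leq K/\sqrt{-t}$. Lemma \ref{lem:type-I_envelope_FNL} rescales to $M_t^j \subset D(a_j \Omega_0, K\sqrt{-t})$, and Lemma \ref{lem:displacement_FNL} rescales to $B(0, c_K\sqrt{-t}) \subset \Omega_t^j$ for all large $j$ and $t \leq -3 a_j^2$. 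Moreover, applying Lemma \ref{lem:lower-speed_FNL} to $M_t$ and unravelling the rescaling, one gets a lower bound $G^j \geq c(R,r) > 0$ on any compact set $\overline{B(0,R)} \times [-R^2,-r^2]$, uniform for $j$ large.

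The next step is to upgrade Hausdorff subconvergence of $\Omega_t^j$ to smooth subconvergence. For fixed $r > 0$ and $t \leq -r^2$, the bounds above let us represent $M_t^j$ locally as a radial graph over $\partial B(0, c_K r)$ (or, after a rotation, as a Euclidean graph over tangent planes) with $C^{1,\alpha}$ norms controlled uniformly in $j$. Since $\gamma$ is either convex or concave and the flow is uniformly parabolic, Krylov's estimate yields interior H\"older bounds on the Hessians and time derivatives of the graphing functions; the uniform lower bound on $G^j$ keeps the normalised principal curvatures in a fixed compact subset of $\Gamma$, so the derivatives of $\gamma$ of all orders are controlled, and Schauder bootstrapping then provides uniform $C^k_{\loc}$ bounds for every $k$. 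A diagonal Arzel\`a-Ascoli argument produces a subsequence converging in $C^\infty_{\loc}(\mathbb{R}^{n+1} \times (-\infty,0))$ to a smooth uniformly parabolic convex $\gamma$-flow $\{M_t'=\partial \Omega_t'\}_{t \in (-\infty,0)}$, with $|A'| \leq K/\sqrt{-t}$.

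Set $M_0' := \cap_{t<0}\,\Omega_t'$. Exactly as in Lemma \ref{lem:split}, the inclusions $\mathcal T_\infty\Omega_0 \subset a_j \Omega_{a_j^{-2}t}$ and $\Omega_t^j \subset D(a_j\Omega_0, K\sqrt{-t})$ pass to the limit and combine to give $\mathcal T_\infty \Omega_0 = M_0'$. On one hand, Proposition \ref{prop:asymptotic_cone} forces $M_0'$ into a hyperplane; on the other, Lemma \ref{lem:displacement_FNL} gives $B(0,c_K\sqrt{-t}) \subset \Omega_t'$ for every $t<0$, so $\cup_{t<0}M_t'$ sweeps out $\mathbb{R}^{n+1}$. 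Thus $\{M_t'\}$ is a uniformly parabolic convex entire $\gamma$-flow with $|A'|\leq K/\sqrt{-t}$, and Proposition \ref{thm:Wang_FNL} applies to identify it with a family of homothetically shrinking spheres or cylinders. In either case the final-time set $M_0'$ is an affine subspace (a point, or a Euclidean factor), so $\mathcal T_\infty \Omega_0 = M_0'$ is as claimed.

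The main obstacle is the smooth subconvergence step: unlike in the mean curvature flow setting of Lemma \ref{lem:split}, no Ecker-Huisken-type interior estimate is directly available, so we must combine the Type I upper bound on $A$ with the lower speed bound from Lemma \ref{lem:lower-speed_FNL} (to prevent degeneracy of the higher derivatives of $\gamma$ as $A \to 0$), and then run Krylov plus Schauder bootstrapping, as was done in the proof of Proposition \ref{thm:Wang_FNL}. Once the smooth blow-down exists, the remainder of the argument is essentially the same as in the mean curvature flow case.
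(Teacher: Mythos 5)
Your proposal is correct and follows essentially the same route as the paper's proof: rescale by the Type I sequence, invoke Lemmas \ref{lem:type-I_envelope_FNL}, \ref{lem:displacement_FNL} and \ref{lem:lower-speed_FNL} for the curvature and displacement controls, use the lower speed bound together with Krylov and Schauder to get $C^\infty_{\loc}$ subconvergence of the blow-down, identify $M_0' := \cap_{t<0}\Omega_t'$ with $\mathcal T_\infty\Omega_0$, deduce entireness via Proposition \ref{prop:asymptotic_cone}, and finish by applying Proposition \ref{thm:Wang_FNL}. The only differences are cosmetic (e.g.\ your explicit remark on the $R,r$-dependence of the lower speed bound after rescaling), and the argument is sound.
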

\begin{proof}
Given $r>0$ the Type I hypothesis implies the second fundamental form of $M_t^j$ satisfies 
\[|A^j| \leq C(n,\gamma,\dist(\lambda/G, \partial \Gamma),K) \,r^{-1} \]
for all $t \leq -r^2$. By Lemma \ref{lem:type-I_envelope_FNL} we know
\begin{equation}
\label{eq:blow-down_containment_FNL_seq}
M_t^j \subset D(a_j \Omega_0, K\sqrt{-t}),
\end{equation}
and as a consequence of Lemma \ref{lem:displacement_FNL} the ball $B(0, c_K r)$ is contained in $\Omega_t^j$ for all $t \leq -r^2$, at least when $j$ is large. Finally, by Lemma \ref{lem:lower-speed_FNL}, in every compact subset of $\mathbb{R}^{n+1} \times (-\infty,-r^2]$ we have a lower bound for $G^j$ that is uniform in $j$, so as above the Krylov and Schauder estimates imply $\{M_t^j\}_{t \in (-\infty,0]}$ subconverges in $C^\infty_{\loc}(\mathbb{R}^{n+1} \times (-\infty, - r^{2}])$. Since $r>0$ was arbitrary, a diagonal argument shows we have convergence in $C^\infty_{\loc}(\mathbb{R}^{n+1} \times (-\infty, 0))$; denote the limiting solution $\{M_t'=\partial \Omega_t'\}_{t\in(-\infty,0)}$. Then we have
\[M_t' \subset D(\mathcal T_\infty \Omega_0, K\sqrt{-t}), \qquad B(0, c_K \sqrt{-t}) \subset \Omega_t',\]
so $\{M_t'\}_{t\in(-\infty, 0)}$ is entire by Proposition \ref{prop:asymptotic_cone}. Moreover, from the first containment we see 
\[M_0':= \cap_{t<0} \, \Omega_t' \subset \mathcal T_\infty \Omega_0.\]
On the other hand
\[\mathcal T_{\infty} \Omega_0 \subset a_j \Omega_{0} \subset a_j \Omega_{a_j^{-2} t}\]
implies $\mathcal T_\infty \Omega_0 \subset M_0'$, so in fact $\mathcal T_\infty \Omega_0 = M_0'$. 

To conclude we observe $\{M_t'\}_{t\in(-\infty,0)}$ fulfills the hypotheses of Proposition \ref{thm:Wang_FNL} and is therefore a family of homothetically shrinking spheres or cylinders. In particular $M_0'$ is an affine subspace, hence $\mathcal T_\infty \Omega_0$ is an affine subspace.
\end{proof}

Finally we conclude $M_t$ is spherical or cylindrical.

\begin{proof}[Proof of Theorem \ref{thm:main_FNL}]
In case $\mathcal T_\infty \Omega_0 =\{0\}$, $M_t$ is compact, hence a sphere by \cite{Lang-Lyn20}. A streamlined version of the argument using results of this section proceeds as follows. Consider a sequence $a_j \to 0$ and set $M_t^j := a_j M_{a_j^{-2} t}$, $ t \leq 0$. Appealing to Lemmas \ref{lem:type-I_envelope_FNL}, \ref{lem:lower-speed_FNL} and \ref{lem:displacement_FNL}, and using the Krylov and Schauder estimates, we may pass to a subsequence such that
\[\{M_t^j\}_{t \in (-\infty, 0)} \to \{M_t'\}_{t \in (-\infty,0)}\]
in $C^\infty_{\loc}(\mathbb{R}^{n+1} \times (-\infty, 0))$. Moreover, we have 
\[M_t' \subset D(0, K \sqrt{-t}), \qquad B(0, c_K \sqrt{-t})\]
for all $t < 0$. The first inclusion implies $M_t'$ is compact.

This lets us conclude $M_t$ is uniformly convex. If not, there is a sequence of times $t_j \to -\infty$ and points $x_j \in M_{t_j}$ such that $\tfrac{\lambda_1}{H}(x_j,t_j) \to 0$. Choosing $a_j = 1/\sqrt{-t_j}$ we find $M_t'$ has a zero principal curvature at time $t = -1$, but since $\gamma$ admits a splitting theorem this implies $M_t$ splits off a Euclidean factor, contradicting compactness. Thus there is an $\varepsilon >0$ such that $\lambda_1 \geq \varepsilon H$ on $M_t$ for all $t \leq 0$, and consequently, for any sequence $a_j$ we choose, the blow-down limit $M_t'$ satisfies $\lambda_1' \geq \varepsilon H'$. Appealing to Andrews' result \cite{And94_euclid} we conclude $M_t'$ contracts to a round point as $t \to 0$, and in particular,
\[\min_{M_t'} \bigg( \frac{\lambda_1'}{H'} - \frac{1}{n}\bigg) \to 0.\]
Since $M_t'$ approximates $a_j M_{a_j^{-2}t}$, it follows that for every $\delta >0$ there is a sequence $\tau_j \to - \infty$ such that 
\[\min_{M_{\tau_j}} \frac{\lambda_1}{H} \geq \frac{1}{n} - \delta.\]
If $\gamma$ is concave and inverse-concave this bound is preserved for times $t \geq \tau_j$ \cite{And07}, so in fact $\lambda_1 \geq (\tfrac{1}{n} - \delta) H$ for all $t \leq 0$. Since $\delta$ was arbitrary, $M_t$ is totally umbilic, hence a sphere. In case $\gamma$ is convex one may proceed similarly using $\lambda_1/G$ in place of $\lambda_1/H$. 

If instead $T_\infty \Omega_0$ is noncompact, Lemma \ref{lem:final_slice_FNL} implies $\mathcal T_\infty \Omega_0 = \Sigma$ for some affine subspace $\Sigma \subset \mathbb{R}^{n+1}$.  Hence we have the splitting  
\[M_t = \Sigma \times M_t^\perp, \qquad M_t^\perp := M_t \cap \Sigma^\perp,\]
and moreover,
\[\mathcal T_\infty \Omega_0^\perp = \mathcal T_\infty \Omega_0 \cap \Sigma^\perp = \Sigma \cap \Sigma^\perp = \{0\},\]
so $M_t^\perp$ is compact. If $n-\dim \Sigma =1$ then $M_t^\perp$ is a circle in $\mathbb{R}^2$ by \cite{Dask-Ham-Ses}, and if instead $n-\dim \Sigma \geq 2$, then as above $M_t^\perp$ is a sphere. In all cases we have exhibited $\{M_t\}_{t\in(-\infty,0]}$ as a family of shrinking spheres or cylinders. 
\end{proof}

{\appendix

\section{Splitting theorem}
\label{app:splitting}

Given a symmetric cone $\Gamma' \subset \mathbb{R}^n$, let us write $\sym(\Gamma')$ for the set of symmetric $n\times n$-matrices with eigenvalues in $\Gamma'$. Consider an admissible speed $\gamma \in C^\infty(\Gamma)$. We abuse notation and write $\gamma$ also for the smooth function on $\sym(\Gamma)$ sending $A$ to $\gamma(\lambda(A))$, where $\lambda: \sym(\Gamma) \to \Gamma$ is the eigenvalue map. Recall $\gamma$ is inverse-concave if $\Gamma_+ \subset \Gamma$ and the function
\[\gamma_*(\lambda) := \gamma(\lambda_1^{-1},\dots,\lambda_n^{-1})^{-1}\]
is concave in $\Gamma_+$. If in addition $\gamma_*$ is strictly concave in non-radial directions, we say $\gamma$ is {\bf strictly inverse-concave}.

A simple computation shows $\gamma$ is (strictly) inverse-concave if and only if
\[\gamma_\dagger(\lambda) := - \gamma(\lambda^{-1}, \dots, \lambda_n^{-1})\]
is (strictly) concave in $\Gamma_+$. The function $\gamma_\dagger$ induces a smooth $O(n)$-invariant function (also denoted $\gamma_\dagger$) on $\sym(\Gamma_+)$ via the eigenvalue map, namely $\gamma_\dagger(A) = -\gamma(A^{-1})$. Differentiating this identity one finds 
\begin{align*}
\ddot \gamma^{ij,kl}_\dagger(A^{-1}) B_{ij}^* B_{kl}^* ={}& - ( \ddot \gamma^{ij,kl}(A) + 2 \dot \gamma^{ik}(A)A^{-1}_{jl})B_{ij} B_{kl}
\end{align*}
for all $A \in \sym(\Gamma_+)$ and $B \in \sym(\mathbb{R}^n)$, where $B^* := A^{-1} B A^{-1}$. The function $\gamma_\dagger$ is concave with respect to matrices if and only if it is concave with respect to eigenvalues (see for example \cite{Lang14}[Section 2.2]), so $\gamma$ is inverse-concave if and only if 
\begin{align*}
( \ddot \gamma^{ij,kl}(A) + 2 \dot \gamma^{ik}(A)A^{-1}_{jl})B_{ij} B_{kl} \geq 0
\end{align*}
for all $A\in\sym(\Gamma_+)$ and $B \in \sym(\mathbb{R}^n)$. Similarly, $\gamma$ is strictly inverse-concave if and only if 
\begin{equation}
\label{eq:strict_inverse-concave}
( \ddot \gamma^{ij,kl}(A) + 2 \dot \gamma^{ik}(A)A^{-1}_{jl})B_{ij} B_{kl}  > 0
\end{equation}
for all $A\in\sym(\Gamma_+)$ and nonzero $B \in \sym(\mathbb{R}^n)$.

We define an admissible speed $\gamma \in C^\infty(\Gamma)$ to be \textbf{strictly inverse-concave on $\partial \Gamma_+$} if $\Gamma_+ \subset \Gamma$ and the following condition is met. For each $1 \leq m \leq n-1$ such that 
\[\Gamma_0^m := \Big\{\lambda_1 = \dots = \lambda_m = 0, \min_{m < i\leq n} \lambda_i >0\Big\} \subset \Gamma,\]
the function $\gamma^{(m)} (\lambda) := \gamma(0,\dots,0, \lambda)$ is strictly inverse-concave in $\Gamma_0^m$. For example, take $\Gamma = \{\lambda_i + \lambda_j >0\}$ and consider the two-harmonic mean
\[\gamma(\lambda) = \Big(\sum_{i<j} (\lambda_i + \lambda_j)^{-1}\Big)^{-1}.\]
In this case $\Gamma_0^{1}$ is in $\Gamma$, but $\Gamma_0^{2}$ is not. We have 
\[\gamma^{(1)}(\lambda) = \Big(\sum_{i} \lambda_i^{-1} + \sum_{i<j} (\lambda_i + \lambda_j)^{-1} \Big)^{-1}, \qquad \gamma^{(1)}_*(\lambda) = \sum_{i} \lambda_i + \sum_{i<j} (\lambda_i^{-1} + \lambda_j^{-1})^{-1}.\]
The function $\gamma^{(1)}_*$ is strictly concave in non-radial directions, so $\gamma$ is strictly inverse-concave on $\partial \Gamma_+$. Similarly, the $k$-harmonic means and the speeds considered in \cite{Lynch20} are all strictly inverse-concave on $\partial \Gamma_+$. 

For an easier proof of the following result in case $\gamma$ is convex, see \cite{And-Lang-McCoy14}[Corollary 1.2].

\begin{proposition}
\label{prop:SMP}
Fix $n \geq 2$ and consider an admissible speed $\gamma \in C^\infty(\Gamma)$ which is strictly inverse-concave on $\partial \Gamma_+$. Let $F:M\times[t_0 - T,t_0] \to \mathbb{R}^{n+1}$ be solution of $\partial_t F = -G\nu$, where $M$ is a connected smooth $n$-manifold and $G = \gamma(\lambda)$. Suppose the principal curvatures of $F(\cdot,t)$ are nonnegative and lie in a cone $\Gamma_0\Subset \Gamma$, and let $W$ denote the Weingarten map of $F(\cdot,t)$. If $\ker(W)$ has dimension $k$ at $(x_0,t_0)$, then it has dimension at least $k$ in $M\times [t_0-T,t_0]$. Moreover, if $v \in \ker (W)$, then $\nabla_v A = 0$. 
\end{proposition}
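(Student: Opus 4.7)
The plan is to view the proposition as a direct consequence of Hamilton's strong maximum principle for nonnegative symmetric 2-tensors, applied to $A_{ij}$. Concretely, starting from $\partial_t A_{ij} = \nabla_i\nabla_j G + G\,A_{ik}A^k_j$ and using Codazzi together with Simons' formula to commute derivatives inside $\nabla_i\nabla_j G = \dot\gamma^{pq}\nabla_i\nabla_j A_{pq} + \ddot\gamma^{pq,rs}\nabla_i A_{pq}\nabla_j A_{rs}$, I would first derive an evolution equation of the form
\[
(\partial_t - \dot\gamma^{pq}\nabla_p\nabla_q) A_{ij} \;=\; \ddot\gamma^{pq,rs}\nabla_i A_{pq}\nabla_j A_{rs} + Q_{ij},
\]
where $Q$ is a purely algebraic reaction term whose every summand carries an explicit factor of $A$ or $A^2$ attached to one of the free indices $i,j$. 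The operator $\dot\gamma^{pq}\nabla_p\nabla_q$ is uniformly elliptic because the principal curvatures lie in $\Gamma_0 \Subset \Gamma$, and $A \geq 0$ by hypothesis.

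Given this setup, Hamilton's SMP will yield both conclusions at once -- the rank of $A$ cannot increase backward in time (so $\dim\ker W \geq k$ on $M \times [t_0 - T, t_0]$), and $\ker W$ is a parallel distribution (equivalent to $\nabla_v A = 0$ for $v \in \ker W$) -- provided we verify the null-eigenvector condition: for every null eigenvector $v$ of $A$ at a given spacetime point,
\[
\ddot\gamma^{pq,rs}(\nabla_v A_{pq})(\nabla_v A_{rs}) + Q(v,v) \;\geq\; 0.
\]
The algebraic piece $Q(v,v)$ vanishes identically, since $A(v,\cdot) = 0$ forces $A^2(v,\cdot) = 0$ and every term in $Q$ carries such a factor in a free index. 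Hence the task reduces to showing that the gradient term $\ddot\gamma^{pq,rs}(\nabla_v A_{pq})(\nabla_v A_{rs})$ is nonnegative.

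To treat this, diagonalise $A$ at the base point in an orthonormal frame with $\lambda_1 = \cdots = \lambda_k = 0 < \lambda_{k+1} \leq \cdots \leq \lambda_n$ and take $v = e_1$. Using the Codazzi identity $\nabla_i A_{jk} = \nabla_j A_{ik}$ and the standard expressions for the second derivatives of an $O(n)$-invariant function at a diagonal matrix (which include the off-diagonal components $\ddot\gamma^{pq,pq} = (\dot\gamma^{pp}-\dot\gamma^{qq})/(\lambda_p-\lambda_q)$), the contributions in which at least one of $p,q,r,s$ lies in $\{1,\dots,k\}$ can be rearranged into the form $2\dot\gamma^{pr}_{(k)}(\bar A)\bar A^{-1}_{qs}$ multiplying $(\nabla_v A_{pq})(\nabla_v A_{rs})$ with $p,q,r,s > k$, where $\bar A:=\diag(\lambda_{k+1},\dots,\lambda_n) \in \sym(\Gamma_+^{n-k})$ and $\gamma_{(k)}$ denotes the restricted speed $\gamma^{(k)}$. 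Writing $B_{pq}:=\nabla_v A_{pq}$ for $p,q > k$, the expression in question therefore becomes
\[
\bigl(\ddot\gamma^{pq,rs}_{(k)}(\bar A) + 2\dot\gamma^{pr}_{(k)}(\bar A)\bar A^{-1}_{qs}\bigr)B_{pq}B_{rs},
\]
which is nonnegative by the strict inverse-concavity of $\gamma^{(k)}$ on $\Gamma_0^k$, in the form \eqref{eq:strict_inverse-concave}.

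The main obstacle is this coefficient matching: the off-diagonal parts of $\ddot\gamma$, after applying Codazzi, must combine with the inverse factor $\bar A^{-1}$ in precisely the way demanded by the inverse-concavity criterion, and one must handle separately the case of multiplicities among the positive eigenvalues $\lambda_{k+1},\dots,\lambda_n$ (where the formula for $\ddot\gamma^{pq,pq}$ changes). This is exactly why the hypothesis is strict inverse-concavity \emph{on $\partial\Gamma_+$}, rather than only in the interior of $\Gamma_+$: at a point where $k$ eigenvalues of $A$ vanish one is evaluating on the facet $\Gamma_0^k$, and it is $\gamma^{(k)}$ that must supply the positive contribution that powers Hamilton's SMP.
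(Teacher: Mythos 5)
Your high-level plan — treat this as a tensor maximum principle applied to $A_{ij}$ with the inverse-concavity hypothesis supplying the null-eigenvector condition — is the right idea, and it is morally what the paper does (the paper runs the argument at the level of the viscosity evolution of $\lambda_1$ and $\Lambda_k = \lambda_1 + \dots + \lambda_k$ rather than of the tensor $A$, but the two viewpoints are essentially dual). The reaction term vanishing, $Q(v,v)=0$, is correct. However, there is a genuine gap in the one step that actually carries the weight of the argument: your claimed source of the $\bar A^{-1}$ term.

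You assert that, after applying Codazzi, the contributions to $\ddot\gamma^{pq,rs}(\nabla_v A_{pq})(\nabla_v A_{rs})$ with at least one index in $\{1,\dots,k\}$ rearrange into $2\dot\gamma_{(k)}^{pr}(\bar A)\bar A^{-1}_{qs}(\nabla_v A_{pq})(\nabla_v A_{rs})$, using the off-diagonal formula $\ddot\gamma^{pq,pq}=(\dot\gamma^{pp}-\dot\gamma^{qq})/(\lambda_p-\lambda_q)$. This cannot work, because those contributions vanish identically at a null eigenvector. Indeed, with $v=e_1$ and $\ker W = \mathrm{span}(e_1,\dots,e_k)$, the polarisation identity applied to the nonnegative functions $A(e_p+e_q,e_p+e_q)$ for $p,q\leq k$ forces $\nabla_i A_{pq}=0$ for all $i$ and all $p,q\leq k$; then Codazzi gives, for $p\leq k<q$, that $\nabla_1 A_{pq}=\nabla_q A_{1p}=0$. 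So $\nabla_1 A_{pq}=0$ whenever $\min(p,q)\leq k$, and the only surviving components are $V_{pq}:=\nabla_1 A_{pq}$ with $p,q>k$. The quantity you must bound is therefore exactly $\ddot\gamma^{(k),pq,rs}(\bar A)V_{pq}V_{rs}$, with no $\bar A^{-1}$ term, and for a concave speed this is $\leq 0$, so the naive null-eigenvector condition fails.

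The missing positive term comes from a different source. Either you feed in the viscosity evolution inequality for the smallest eigenvalue — which is what the paper does, citing \cite{Lang14}[Theorem 4.18] — and this supplies the additional term $2\sum_{\lambda_p>\lambda_1}(\lambda_p-\lambda_1)^{-1}\dot\gamma^{ij}\nabla_1 A_{ip}\nabla_1 A_{jp}$, which at $\lambda_1=0$ is exactly $2\dot\gamma^{pr}\bar A^{-1}_{qs}V_{pq}V_{rs}$; or, staying in the tensor picture, you must use Hamilton's refined strong maximum principle in which the null eigenvector $v$ is extended not parallelly but by $V=e_1+\sum_{p>k}f^p e_p$ with the $\nabla f^p$ optimised. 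Minimising $\dot\gamma^{ij}\nabla_i\nabla_j A(V,V)\geq 0$ over $\nabla f^p$ produces the corrector $-2\sum_{p>k}\lambda_p^{-1}\dot\gamma^{ij}V_{ip}V_{jp}$, and combining with the evolution equation for $A_{11}$ gives precisely the inverse-concavity quantity $(\ddot\gamma^{(k)}(\bar A)+2\dot\gamma^{(k)}\bar A^{-1})(V,V)\leq 0$. The inverse-concavity hypothesis then forces $V=0$. Either route requires a statement and justification you have not given, and your write-up currently attributes the $\bar A^{-1}$ term to vanishing contributions, so the argument as written does not close. You should also note that once the propagation of $\ker W$ is fixed, the second conclusion $\nabla_v A=0$ follows from the same calculation (with the sharper fact that the inverse-concavity quantity vanishes exactly, hence by strictness $V=0$); the paper isolates this as a separate step, and then runs an induction on $\Lambda_k$ to stabilise the rank, which in your formulation would be built into the tensor maximum principle.
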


\begin{proof}
Label the principal curvatures of $F(\cdot,t)$ so that $\lambda_1 \leq \dots \leq \lambda_n$. We first show that if $\lambda_1(x_0,t_0) =0$ for some $x_0 \in M$ then $\lambda_1 \equiv 0 $ in $M\times[t_0-T,t_0]$. Consider a general point $(x,t) \in M\times [t_0-T, t_0]$, and suppose $\varphi$ is a $C^2$ function in a backward neighbourhood of $(x,t)$ such that $\lambda_1 \geq \varphi$ with equality at $(x,t)$. If $\{e_i\}$ is a local orthonormal frame such that $A(e_i,e_i) = \lambda_i$ at $(x,t)$, then at $(x,t)$ we have \cite{Lang14}[Theorem 4.18] (cf. \cite{And07} and \cite{Lang17}):
\begin{align*}
(\partial_t - \dot \gamma^{ij} \nabla_i \nabla_j) \varphi \geq{}& \dot\gamma^{ij} A^2_{ij} \varphi +\ddot \gamma^{ij,kl}\nabla_1 A_{ij} \nabla_1 A_{kl}+ 2 \sum_{\lambda_k>\lambda_1} (\lambda_k-\lambda_1)^{-1}\dot\gamma^{ij}   \nabla_1 A_{ik} \nabla_1 A_{jk}.
\end{align*}
Let us write $\nabla_1 A(x,t)=U+V$ where
\begin{align*}
U:={}& \nabla_1 A_{11} e^1 \otimes e^1 + \sum_{i>1} \nabla_1 A_{i1} e^i \otimes e^1 +  \sum_{j>1} \nabla_1 A_{1j} e^1 \otimes e^j,\\
V:={}& \sum_{i,j>1} \nabla_1 A_{ij} e^i \otimes e^j,
\end{align*}
and define $m$ to be the multiplicity of $\lambda_1$ at $(x,t)$, so that at $(x,t)$ we have
\begin{align*}
(\partial_t - \dot \gamma^{ij} \nabla_i \nabla_j) \varphi \geq{}&\dot\gamma^{ij} A^2_{ij} \varphi +\ddot \gamma^{ij,kl}U_{ij}U_{kl} + 2 \ddot \gamma^{ij,kl} U_{ij}V_{kl} + \ddot \gamma^{ij,kl}V_{ij}V_{kl}\\
&+ 2  \sum_{i,j,k>m}\lambda_k^{-1} \dot\gamma^{ij}   \nabla_1 A_{ik} \nabla_1 A_{jk}.
\end{align*}

If $v$ is a unit tangent vector at $(x,t)$ with $A(v,v) = \lambda_1$, then $\nabla A(v,v) = \nabla \varphi(x,t)$. Therefore,  for all $1< i \leq m$ and $1 \leq j \leq n$, by the Codazzi equations and the polarisation identity,
\begin{align*}
\nabla_1 A_{ij}(x,t) = \nabla_j A_{1i}(x,t) = 0,
\end{align*}
hence $V = \sum_{i,j >m} \nabla_1 A_{ij}$. Consequently, at $(x,t)$ we have 
\begin{align*}
(\partial_t - \dot \gamma^{ij} \nabla_i \nabla_j) \varphi \geq{}&\dot\gamma^{ij} A^2_{ij} \varphi +\ddot \gamma^{ij,kl}U_{ij}U_{kl} + 2 \ddot \gamma^{ij,kl} U_{ij}V_{kl} \\
&+ \sum_{i,j,k,l > m} (\ddot \gamma^{ij,kl} + 2 \dot\gamma^{ik}\lambda_j^{-1} \delta_{jl})V_{ij}V_{kl}.
\end{align*}
Appealing to the Codazzi equations we obtain $|U|\leq C|\nabla A_{11}|$ for some $C= C(n)$, so since $\nabla A_{11} (x,t) = \nabla \varphi(x,t)$, at $(x,t)$ we have
\begin{align}
\label{eq:lambda_1_est}
(\partial_t - \dot \gamma^{ij} \nabla_i \nabla_j) \varphi \geq{}&\dot\gamma^{ij} A^2_{ij} \varphi - C G^{-1} |\nabla A| |\nabla \varphi| +\sum_{i,j,k,l > m} (\ddot \gamma^{ij,kl} + 2 \dot\gamma^{ik}\lambda_j^{-1} \delta_{jl})V_{ij}V_{kl}
\end{align}
where $C=C(n,\gamma,\Gamma_0)$. 

By assumption the function $\gamma^{(m)}$ is a strictly inverse-concave, so in light of \eqref{eq:strict_inverse-concave}, the final term on the right in \eqref{eq:lambda_1_est} is nonnegative (and indeed strictly positive unless $V = 0$). The point $(x,t)$ was arbitrary, so by the strong maximum principle for viscosity solutions to parabolic equations,\footnote{See for example \cite{DaLio}} since $\lambda_1$ takes its spacetime minimum at $(x_0,t_0)$, we have $\lambda_1 \equiv 0$ in $M\times [t_0-T,t_0]$. 

Next, given $(x,t) \in M\times[t_0-T,t_0]$, suppose $v \in \ker(W(x,t))$. We claim 
\[\nabla_v A(x,t) =0.\]
To see this, choose a local orthonormal frame $\{e_i\}$ such that $v = |v| e_1$ and $A(e_i,e_i) = \lambda_i$ at $(x,t)$. As before, let us write $\nabla_1 A(x,t) = U + V$, where 
\[V = \sum_{i,j >1} \nabla_1 A_{ij} e^i \otimes e^j = \sum_{i,j >m} \nabla_1 A_{ij} e^i \otimes e^j,\] 
$m$ being the multiplicity of $\lambda_1$ at $(x,t)$. Since $\lambda_1 \equiv 0$ we can apply \eqref{eq:lambda_1_est} with $\varphi \equiv 0$ to conclude 
\[0 = \sum_{i,j,k,l > m} (\ddot \gamma^{ij,kl} + 2 \dot\gamma^{ik}\lambda_j^{-1} \delta_{jl})V_{ij}V_{kl}\]
at $(x,t)$, hence $V=0$ by \eqref{eq:strict_inverse-concave} applied to $\gamma^{(m)}$. Since $\nabla A_{11} (x,t) = \nabla \varphi(x,t) =0$, $U=0$ as well, so it follows that
\[|v|^{-1}\nabla_v A(x,t) = \nabla_1 A(x,t) = U+V = 0.\]
The point $(x,t)$ was arbitrary, so we conclude that $\nabla_v A =0$ whenever $v \in \ker(W)$. 

For each $1 \leq k \leq n-1$, define $\Lambda_k := \lambda_1 + \dots + \lambda_k$. To finish, we will show that if $\Lambda_k(\cdot,t_0)$ vanishes somewhere in $M$, then $\Lambda_k \equiv 0$ in $M\times[t_0-T,t_0]$. We have already seen this to be true for $k=1$, and the remaining cases follow by induction, as follows. Let $2 \leq k \leq n-1$ be fixed, suppose $\Lambda_{k-1} \equiv 0$ in $M\times[t_0-T,t_0]$, and also that 
\[\Lambda_{k}(x_0,t_0) = 0\] for some $x_0 \in M$. Consider a general $(x,t) \in M\times [t_0 - T, t_0]$ and let $\{e_i\}$ be a local orthonormal frame with $A(x,t)(e_i, e_i)=\lambda_i$. Then, in particular, $e_i \in \ker(W(x,t))$ for all $1\leq i \leq k-1$, and we consequently have
\begin{equation}
\label{eq:SMP_grad}
\nabla_i A(x,t) =0, \qquad 1 \leq i \leq k-1.
\end{equation}
If $\varphi$ is a $C^2$ function defined in a backward neighbourhood of $(x,t)$ such that $\Lambda_k \geq \varphi$ with equality at $(x,t)$, then at $(x,t)$ we have (again, this computation can be found in \cite{Lang14}[Theorem 4.18] and also \cite{Lang17}):
\begin{align*}
(\partial_t - \dot \gamma^{ij} \nabla_i \nabla_j) \varphi \geq{}& \dot\gamma^{ij} A^2_{ij} \varphi +\sum_{l = 1}^k\ddot \gamma^{pq,rs}\nabla_l A_{pq} \nabla_l A_{rs}\\
&+ 2 \sum_{l = 1}^k  \sum_{p > k, \, \lambda_p>\lambda_l} (\lambda_p-\lambda_l)^{-1}\dot\gamma^{ij}   \nabla_l A_{ip} \nabla_l A_{jp}\\
\geq{}& \dot\gamma^{ij} A^2_{ij} \varphi +\ddot \gamma^{pq,rs}\nabla_k A_{pq} \nabla_k A_{rs}+ 2 \sum_{\lambda_p>\lambda_k} (\lambda_p-\lambda_k)^{-1}\dot\gamma^{ij}   \nabla_k A_{ip} \nabla_k A_{jp},
\end{align*}
where in the last line we have used \eqref{eq:SMP_grad}. If $e_k(x,t) \in \ker(W(x,t))$ we have $\nabla_k A(x,t)=0$, hence 
\begin{align*}
(\partial_t - \dot \gamma^{ij} \nabla_i \nabla_j) \varphi \geq{}0
\end{align*}
at $(x,t)$, so assume $\lambda_k(x,t) >0$. Write $\nabla_k A(x,t)$ as $U + V$ where
\begin{align*}
U &= \nabla_k A_{kk} e^k \otimes e^k+ \sum_{k < p \leq n} \nabla_k A_{pk} e^p \otimes e^k + \sum_{k<q\leq n} \nabla_k A_{kq} e^k\otimes e^q,\\
V &= \sum_{p,q >k} \nabla_k A_{pq} e^p \otimes e^q.
\end{align*}
Using $\nabla A_{kk} (x,t) = \nabla \varphi (x,t)$ and the Codazzi equations we estimate
\begin{align*}
(\partial_t - \dot \gamma^{ij} \nabla_i \nabla_j) \varphi \geq{}& \dot\gamma^{ij} A^2_{ij} \varphi - C|\nabla A||\nabla \varphi|+\sum_{p,q,r,s>k} (\ddot \gamma^{pq,rs}+ 2\dot\gamma^{pr}\lambda_q^{-1}\delta_{qs})V_{pq}V_{rs}
\end{align*}
at $(x,t)$, where $C = C(n,\gamma,\Gamma_0)$. As in the $k=1$ case, inverse-concavity of $\gamma^{(k)}$ and \eqref{eq:strict_inverse-concave} now imply
\begin{align*}
(\partial_t - \dot \gamma^{ij} \nabla_i \nabla_j) \varphi \geq{}& \dot\gamma^{ij} A^2_{ij} \varphi - C|\nabla A||\nabla \varphi|
\end{align*}
at $(x,t)$. Since this inequality holds at every point $(x,t) \in M\times[t_0 -T, t_0]$ where $\Lambda_k$ admits a lower support $\varphi$, and $\Lambda_k$ takes its global spacetime minimum at $(x_0,t_0)$, the strong maximum principle implies $\Lambda_k \equiv 0$ in $M\times [t_0 -T, t_0]$. 
\end{proof}

Standard arguments now imply the following splitting theorem. We borrow heavily from \cite{Bourni-Lang}[Theorem 5.1] (cf. \cite{Ham86}[Section 8] and \cite{Huisk-Sin99a}[Theorem 4.1]).

\begin{proposition}
\label{prop:split}
Fix an admissible speed $\gamma\in C^\infty(\Gamma)$ which is strictly inverse-concave on $\partial \Gamma_+$.  Let $F:M\times (- T,0] \to \mathbb{R}^{n+1}$ be a complete solution of $\partial_t F = -G\nu$, where $M$ is connected, and suppose the principal curvatures of $F(\cdot,t)$ are nonnegative and lie in $\Gamma_0 \Subset \Gamma$. Then there is a constant $\delta > 0$ such that $\dim(\ker W)$ is constant in $M\times (-\delta,0]$. In case $\dim(\ker W) = m \geq 1$ in $(-\delta,0]$, $M_t := F(M,t)$ splits as a product 
\[M_t = \mathbb{R}^m\times N_t, \qquad t \in (-\delta, 0], \]
where $N_t$ is a smooth hypersurface in $\mathbb{R}^{n-m+1}$ with positive second fundamental form. If $T=\infty$ and $M_t$ is the boundary of a convex domain for each $t \leq 0$ we can take $\delta = \infty$. 
\end{proposition}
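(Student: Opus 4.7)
The plan is to combine Proposition \ref{prop:SMP} with the induced identity $\nabla_v A \equiv 0$ on $\ker W$ to produce first a distribution of constant rank, then a parallel one, and then the desired cylindrical splitting.

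First I would set $m(t) := \max_{x \in M}\dim \ker W(x,t)$. Taking $x_0$ realising the maximum at some time $t_0$ and applying Proposition \ref{prop:SMP} yields $\dim \ker W \geq m(t_0)$ pointwise on $M \times (t_0 - T, t_0]$, so $m$ is non-increasing in $t$. Since $m$ takes integer values in $\{0, \dots, n\}$, it is constant on some interval $(-\delta, 0]$; let $m$ denote this common value. Applying Proposition \ref{prop:SMP} once more at a point where $\dim \ker W(\cdot, 0) = m$ gives the pointwise lower bound $\dim \ker W \geq m$ on all of $M\times (-T, 0]$, which together with the upper bound $\dim \ker W(x,t) \leq m(t) = m$ for $t \in (-\delta, 0]$ forces $\dim \ker W \equiv m$ on $M \times (-\delta, 0]$.

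Assume $m \geq 1$. The distribution $\mathcal{D} := \ker W$ has constant rank $m$ and is therefore smooth. For any section $v$ of $\mathcal{D}$ and tangent vector $X$, Codazzi and the identity $\nabla_v A \equiv 0$ from Proposition \ref{prop:SMP} give
\[W(\nabla_X v) = -(\nabla_X W)(v) = -(\nabla_v W)(X) = 0,\]
so $\mathcal{D}$ is a parallel distribution on $M_t$. Parallel distributions are involutive with totally geodesic leaves, and since $W$ vanishes on $\mathcal{D}$, each leaf is also totally geodesic as a submanifold of $\mathbb{R}^{n+1}$, hence an affine $m$-plane. The identity $D_X \nu = -W(X) = 0$ for $X \in \mathcal{D}$ shows $\nu$ is constant along each leaf, so all leaves are mutually parallel. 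Completeness of $M_t$ upgrades this to a global splitting $M_t = \mathbb{R}^m \times N_t$ with $N_t \subset \mathbb{R}^{n-m+1}$, and since $\dim \ker W \equiv m$ exactly, $N_t$ has positive second fundamental form.

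For the final assertion, assume $T = \infty$ and each $\Omega_t$ is convex. To extend the splitting backward in time, fix $t_0 \in (-\delta, 0]$; then $\Omega_{t_0} = \mathbb{R}^m \times \Omega_{t_0}^\perp$ contains an entire affine $m$-plane, and for $s < t_0$ the inclusion $\Omega_s \supset \Omega_{t_0}$ together with convexity and openness of $\Omega_s$ forces $\Omega_s$ to contain a parallel $m$-plane through each of its points, so $\Omega_s = \mathbb{R}^m \times \Omega_s^\perp$ and $M_s = \mathbb{R}^m \times N_s$ for every $s \leq 0$. To rule out $\dim \ker W > m$ at some earlier $(x_1, t_1)$, I would apply the first two steps of the argument at time $t_1$ to produce a finer splitting $M_{t_1} = \mathbb{R}^k \times P_{t_1}$ with $k > m$; by translation-invariance of the $\gamma$-flow the $\mathbb{R}^k$-symmetry of $M_{t_1}$ persists on its entire forward smooth domain of existence, in particular giving $\dim \ker W(\cdot, 0) \geq k > m$, contradicting $m(0) = m$. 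Hence $\dim \ker W \equiv m$ on $M \times (-\infty, 0]$ and we may take $\delta = \infty$. The step I expect to be the main obstacle is the passage from the infinitesimal parallel structure of $\mathcal{D}$ to a genuinely global product decomposition of the complete hypersurface $M_t$: this needs some attention to completeness of the leaves and the absence of holonomy, and while it is standard for totally geodesic parallel distributions in Euclidean space, it deserves explicit justification.
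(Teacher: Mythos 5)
Your outline of the first part (constancy of $\dim(\ker W)$ near $t=0$ via Proposition \ref{prop:SMP}, then the splitting via $\nabla_v A = 0$ and Codazzi) follows essentially the same route as the paper, and your concern about upgrading the parallel distribution $\mathcal D$ to a global product is a legitimate but minor point: the paper handles it by observing that $F_*\ker W$ is parallel in the \emph{ambient} Euclidean connection, so that the leaves are affine $m$-planes all pointing the same way, which is exactly the kind of argument you anticipate. Two remarks on this part. First, your claim that ``$m$ takes integer values, hence is constant on $(-\delta, 0]$'' omits the possible jump at $t=0$ itself; one must additionally argue (via Proposition \ref{prop:SMP} together with continuity of the $\lambda_i$ in $t$) that $\dim(\ker W(\cdot,0)) \geq \lim_{t\to 0^-}\dim(\ker W(\cdot,t))$. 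Second, while you establish that each $M_t$ individually splits as an affine $m$-plane times a strictly convex factor, you do not show the $\mathbb{R}^m$-factor is the \emph{same} subspace for all $t\in(-\delta,0]$; the paper does this by computing $\partial_t F_*v \in F_*\ker W$ for $v\in\ker W$, using $(\partial_t W)(v) = \dot\gamma^{ij}\nabla_i\nabla_j W(v) = 0$.

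The genuine divergence from the paper, and the point I would press you on, is your argument for $\delta=\infty$. You rule out $\dim(\ker W) > m$ at earlier times by producing a finer splitting $M_{t_1} = \mathbb{R}^k\times P_{t_1}$ and then asserting that ``by translation-invariance the $\mathbb{R}^k$-symmetry of $M_{t_1}$ persists on its entire forward smooth domain of existence.'' That step is precisely uniqueness of solutions forward in time from the slice $M_{t_1}$, which for noncompact fully nonlinear flows is a nontrivial ingredient not established anywhere in the paper (and not obviously available under only the stated curvature-cone hypotheses). The paper avoids this entirely: it takes $m$ to be the $t\to-\infty$ stable value of $\dim(\ker W)$ (which exists since Proposition \ref{prop:SMP} forces $\dim(\ker W)$ to be constant in $x$ and nonincreasing in $t$), obtains the splitting on $(-\infty,T]$, and then propagates the containment $\mathbb{R}^m\subset\Omega_t$ \emph{forward} to $t=0$ using only the avoidance principle against shrinking balls. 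With $m$ defined from the $t\to-\infty$ end rather than from $t=0$, the monotonicity of $\dim(\ker W)$ gives $\dim(\ker W)(\cdot,t)\leq m$ for $t\geq T$ for free, so no uniqueness is needed to close the argument. You should restructure the final step along these lines (or else explicitly justify the uniqueness you are invoking).
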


\begin{proof}
Let $x_0 \in M$ be a point where $\dim(\ker(W(\cdot, 0))$ attains its maximum, which we denote by $m$. Then Proposition \ref{prop:SMP} implies $\dim(\ker W) \geq m$ in $M\times (-T,0]$, and there is a time $\delta >0$ such that $\lambda_{m+1} >0$ in $M \times (-\delta,0]$. In particular, since $\dim(\ker W)$ is constant in $M\times (-\delta,0]$, the subspace $\ker W$ is smooth over $M\times(-\delta,0]$.

Let us restrict attention to times $t \in (-\delta, 0]$. We claim $F_* \ker W$ is a parallel subspace of $T\mathbb{R}^{n+1}$. To see this, note by Proposition \ref{prop:SMP} that if $v$ is a smooth vector field in $\ker W$ then $\nabla_v A =0$, so by the Codazzi equations
\[0 = \nabla_v A(w,u) = \nabla_u A(v,w)= -A(\nabla_u v, w) \]
for all $u$ and $w$, hence $\nabla_u v \in \ker W$. Thus, for any $u$ tangent to $M$ we compute
\[D_{F_*u} \, F_* v  = F_* \nabla_{u} v  \in F_* \ker W,\]
where $D$ is the Euclidean connection on $\mathbb{R}^{n+1}$. This shows $F_* \ker W$ is parallel in space. We also have 
\[\partial_t F_*v = F_* \partial_t v ,\]
so since
\[- W(\partial_t v) = (\partial_t W)(v) = \dot\gamma^{ij} \nabla_i \nabla_j W (v) = 0,\]
$F_*\ker W$ is parallel in time. 

We conclude that  $\mathbb{R}^{n+1}$ can be split as $\mathbb{R}^{m} \times \mathbb{R}^{n-m+1}$ in such a way that $F_* \ker W$ is orthogonal to $\mathbb{R}^{n-m+1}$ for all $t \in (-\delta,0]$. Since $F_* \ker W$ is parallel, given any nonzero $v \in \ker W{(x,t)}$, the geodesic in $M_{t}$ tangent to $F_* v$ at $F(x,t)$ is a straight line in $\mathbb{R}^{n+1}$ orthogonal to $\mathbb{R}^{n-m+1}$. It follows that $M_t = \mathbb{R}^m \times N_t$ for each $t \in (-\delta,0]$, where 
\[N_t:=M_t \cap \mathbb{R}^{n-m+1}.\]
Since $\lambda_{m+1}$ is positive in $M\times(-\delta,0]$, the hypersurfaces $N_t$ all have positive second fundamental form. 

Now suppose $F$ is defined for all times $t \in (-\infty,0]$, $M_t$ is the boundary of a convex domain $\Omega_t$ for each $t\leq 0$, and $M_t$ has a zero principal curvature for some $t \leq 0$ . Proposition \ref{prop:SMP} implies $\dim(\ker W)$ is nonincreasing in time, so there is a $T<0$ such that $\dim (\ker W)$ is constant in $M\times(-\infty,T]$. Moreover, as we have just seen, $M_t = \mathbb{R}^m \times N_t$ for all $t \leq T$ and some fixed $0 \leq m \leq n-1$. Translating the solution if necessary, we may assume the origin is contained in $\Omega_0$. Then $\mathbb{R}^m$ is a subset of $\Omega_t$ for all $t\leq T$. Let $T'$ be the supremum of all those $t\in[T,0]$ such that $\mathbb{R}^m \subset \Omega_t$. If $T' < 0$ then there is a point $x_0 \in \mathbb{R}^m$ such that $x_0 \in \Omega_t$ for all $t < T'$ but $x_0 \not \in \Omega_{T'}$. On the other hand there is a ball $B(0,r) \subset \Omega_0 \subset \Omega_t$ for all $t \leq 0$, so by convexity $B(x,r) \subset \Omega_t$ for all $x \in \mathbb{R}^m$ and $t  <T'$. In particular, $B(x_0,r) \subset \Omega_t$ for all $t <T'$, but this contradicts the avoidance principle, so in fact $\mathbb{R}^m \subset \Omega_t$ for all $t \leq 0$. By convexity, the splitting $M_t = \mathbb{R}^m \times N_t$ persists for times $t \in [T,0]$.
\end{proof}
}

\bibliographystyle{alpha}
\bibliography{references}

\end{document}